\newtheorem{thm}{Theorem}[section]
\newtheorem*{thm*}{Theorem}
\newtheorem{lemma}[thm]{Lemma}
\newtheorem{prop}[thm]{Proposition}
\newtheorem{cor}[thm]{Corollary}
\newtheorem*{cor*}{Corollary}
\theoremstyle{definition}
\newtheorem{defn}[thm]{Definition}
\newtheorem{example}[thm]{Example}
\theoremstyle{remark}
\newtheorem{remark}[thm]{Remark}
\newcommand {\Fa}    {\ensuremath{\mbox{$\mathcal{F}$}}}
\newcommand {\Ha}    {\ensuremath{\mbox{$\mathcal{H}$}}}
\newcommand {\intg}  {\ensuremath{\mathbb{Z}}}
\newcommand {\rat}   {\ensuremath{\mathbb{Q}}}
\newcommand {\rk}    {\operatorname{rk}}
\newcommand {\ch}    {\ensuremath{\operatorname{ch}}}
\newcommand {\colim} {\ensuremath{\operatorname{colim}}}
\newcommand {\BM}   {{\ensuremath{\operatorname{BM}}}}
\newcommand {\oO}    {\ensuremath{\mathcal{O}}}
\newcommand {\cl}   {{\ensuremath{\operatorname{cl}}}}
\newcommand{\td}   {\ensuremath{\operatorname{td}}}
\newcommand{\Per} {\ensuremath{\operatorname{Per}}}
\newcommand{\alg} {\ensuremath{\operatorname{alg}}}
\begin{document}


\title{Topological Gysin Coherence for Algebraic Characteristic Classes of Singular Spaces}

\author{Markus Banagl}

\address{Institut f\"{u}r Mathematik, Universit\"{a}t Heidelberg,
  Im Neuenheimer Feld 205, 69120 Heidelberg, Germany}

\email{banagl@mathi.uni-heidelberg.de}

\author{J\"{o}rg Sch\"{u}rmann}

\address{Mathematisches Institut, Universit\"{a}t M\"{u}nster, 
Einsteinstr. 62, 48149 M\"{u}nster, Germany}
\email{jschuerm@uni-muenster.de}

\author{Dominik J. Wrazidlo}

\address{Institut f\"{u}r Mathematik, Universit\"{a}t Heidelberg,
  Im Neuenheimer Feld 205, 69120 Heidelberg, Germany}

\email{dwrazidlo@mathi.uni-heidelberg.de}

\thanks{M. Banagl and D. Wrazidlo are funded in part by the Deutsche Forschungsgemeinschaft (DFG, German Research Foundation) through a research grant to the first author (Projektnummer 495696766).
J. Sch\"{u}rmann is funded by the Deutsche Forschungsgemeinschaft (DFG, German Research Foundation) Project-ID 427320536 -- SFB~1442, as well as under Germany's Excellence Strategy EXC 2044 390685587, Mathematics M\"{u}nster: Dynamics -- Geometry -- Structure.}

\date{\today}

\subjclass[2020]{57R20, 55R12, 55N33, 57N80, 32S60, 32S20, 14M15, 14C17, 32S50}

\keywords{Gysin transfer, Characteristic Classes, Singularities, Stratified Spaces, Intersection Homology, Goresky-MacPherson $L$-classes, Chern classes, Todd classes, Verdier-Riemann-Roch formulae, Schubert varieties, Intersection theory, Transversality}


\begin{abstract}
Brasselet, the second author and Yokura introduced Hodge-theoretic Hirzebruch-type characteristic classes $IT_{1, \ast}$, and conjectured that they are equal to the Goresky-MacPherson $L$-classes for pure-dimensional compact complex algebraic varieties.
In this paper, we show that the framework of Gysin coherent characteristic classes of singular complex algebraic varieties developed by the first and third author in previous work applies to the characteristic classes $IT_{1, \ast}$.
In doing so, we prove the ambient version of the above conjecture for a certain class of subvarieties in a Grassmannian, including all Schubert subvarieties.
Since the homology of Schubert subvarieties injects into the homology of the ambient Grassmannian, this implies the conjecture for all Schubert varieties in a Grassmannian.
We also study other algebraic characteristic classes such as Chern classes and Todd classes (or their variants for the intersection cohomology sheaves) within the framework of Gysin coherent characteristic classes.
\end{abstract}

\maketitle


\tableofcontents


\section{Introduction}
We show that various algebraic characteristic classes for singular spaces fit into the framework of Gysin coherent characteristic classes of singular complex algebraic varieties that was developed by the first and the third author in \cite{bw}.
This framework formalizes Verdier-Riemann-Roch type formulae with respect to Gysin restriction arising from transverse intersection with smooth varieties.
In particular, we show that the framework applies to the Hodge-theoretic intersection Hirzebruch characteristic classes $IT_{1, \ast}$ introduced by the second author jointly with Brasselet and Yokura in \cite{bsy}.
The classes $IT_{1, \ast}$ are conjectured in \cite[Remark 5.4]{bsy} to be equal to the topological $L$-classes of Goresky and MacPherson for pure-dimensional compact complex algebraic varieties.
By applying the uniqueness theorem for Gysin coherent characteristic classes of \cite{bw} to both the classes $IT_{1, \ast}$ and the Goresky-MacPherson $L$-classes, we conclude that both classes coincide on Cohen-Macaulay subvarieties of Grassmannians after push-forward into the homology of the ambient Grassmannian.
Since the homology of Schubert subvarieties injects into the homology of an ambient Grassmannian, this implies the conjecture for all Schubert varieties in a Grassmannian.

The notion of Gysin coherent characteristic classes is recalled in \Cref{Gysin coherent characteristic classes}.
This notion was introduced in \cite{bw} with respect to a family $\mathcal{X}$ of inclusions $i \colon X \rightarrow W$ of compact irreducible subvarieties of smooth pure-dimensional projective complex algebraic varieties $W$ which satisfy primarily an analog of the Kleiman-Bertini transversality theorem with respect to a suitable notion of transversality.
A Gysin coherent characteristic class is a pair $c\ell = (c\ell^{\ast}, c\ell_{\ast})$ consisting of a function $c\ell^{\ast}$ that assigns to every inclusion $f \colon M \rightarrow W$ of a smooth closed subvariety $M \subset W$ in a smooth variety $W$ a normalized element $c\ell^{\ast}(f) \in H^{\ast}(M; \mathbb{Q})$, and a function $c\ell_{\ast}$ that assigns to every inclusion $i \colon X \rightarrow W$ of a compact possibly singular subvariety of a smooth variety an element $c\ell_{\ast}(i) \in H_{\ast}(W; \mathbb{Q})$ whose highest non-trivial homogeneous component is the ambient fundamental class of $X$ in $W$ such that the Gysin restriction formula
$$
f^{!} c\ell_{\ast}(i) = c\ell^{\ast}(f) \cap c\ell_{\ast}(M \cap X \subset M)
$$
holds for all $i$ contained in $\mathcal{X}$.
Here, $f^{!}$ denotes the topological Gysin homomorphism on singular rational homology.
Furthermore, we require that $c\ell_{\ast}$ is multiplicative under products, that $c\ell^{\ast}$ and $c\ell_{\ast}$ transform naturally under isomorphisms of ambient smooth varieties, and that $c\ell_{\ast}$ is natural with respect to inclusions in larger ambient smooth varieties.
It was shown in \cite{bw} that the Goresky-MacPherson $L$-class gives rise to a Gysin coherent characteristic class.

In this paper, the focus lies on application of the Gysin coherence framework to the Hodge-theoretic intersection Hirzebruch characteristic classes $IT_{1, \ast}$ introduced by the second author jointly with Brasselet and Yokura in \cite{bsy}.
\Cref{sec.hodgeclasses} provides a brief outline of the theory of Hodge-theoretic characteristic classes on singular algebraic varieties.
For an introduction to algebraic characteristic classes of singular spaces via mixed Hodge theory in the complex algebraic context, see the second author's expository paper \cite{schuermannmsri}.
For an introduction to topological characteristic classes of singular spaces, see \cite{banagltiss}.

Let $L^{\ast}(\nu)$ denote the Hirzebruch $L$-class of a topological vector bundle $\nu$.
The main result of the present paper is \Cref{proposition it class is l type characteristic class}, which can be stated as follows.

\begin{thm*}
The pair $\mathcal{L} = (\mathcal{L}^{\ast}, \mathcal{L}_{\ast})$ defined by $\mathcal{L}^{\ast}(f) = L^{\ast}(\nu_{f})$ for every inclusion $f \colon M \rightarrow W$ of a smooth closed subvariety $M \subset W$ in a smooth complex algebraic variety $W$ with normal bundle $\nu_{f}$, and by $\mathcal{L}_{\ast}(i) = i_{\ast} IT_{1, \ast}(X)$ for every inclusion $i \colon X \rightarrow W$ of a compact possibly singular subvariety $X \subset W$ in a smooth variety $W$ is a Gysin coherent characteristic class with respect to the family $\mathcal{X}_{CM}$ of inclusions $i \colon X \rightarrow W$ such that $X$ is Cohen-Macaulay.
\end{thm*}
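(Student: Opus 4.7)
The plan is to verify, one by one, the axioms defining a Gysin coherent characteristic class as recalled in the introduction. Several of these axioms follow directly from standard properties of $IT_{1,\ast}$ and can be dispatched at the outset. The normalization of $\mathcal{L}^{\ast}(f) = L^{\ast}(\nu_f)$ is immediate since the Hirzebruch $L$-class has constant term $1$, and its naturality under isomorphisms of ambient smooth varieties follows from the naturality of the normal bundle. Because $\mathcal{L}_{\ast}(i) = i_{\ast} IT_{1,\ast}(X)$ with $IT_{1,\ast}(X) \in H_{\ast}(X; \mathbb{Q})$ an intrinsic invariant of $X$, naturality of $\mathcal{L}_{\ast}$ under isomorphisms of ambient smooth varieties and under inclusions into larger ambient smooth varieties is automatic from functoriality of push-forward. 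Multiplicativity under exterior products reduces to $IT_{1,\ast}(X \times Y) = IT_{1,\ast}(X) \times IT_{1,\ast}(Y)$, a consequence of the K\"unneth isomorphism for intersection mixed Hodge modules combined with the multiplicativity of the motivic Hirzebruch transformation $T_{y,\ast}$ specialized at $y = 1$. The normalization requirement that the top homogeneous component of $\mathcal{L}_{\ast}(i)$ equal the ambient fundamental class $i_{\ast}[X]$ is a well-known property of $IT_{1,\ast}(X)$ recorded in \cite{bsy}.

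The principal task is the Gysin restriction formula
$$
f^{!} \, i_{\ast} IT_{1,\ast}(X) = L^{\ast}(\nu_f) \cap (M \cap X \hookrightarrow M)_{\ast} IT_{1,\ast}(M \cap X)
$$
in $H_{\ast}(M; \mathbb{Q})$, for $i \in \mathcal{X}_{CM}$ and $M \subset W$ smooth and transverse to $X$. The strategy has two main steps. First, I use the Cohen-Macaulay hypothesis together with transversality to show that the scheme-theoretic intersection $M \cap X$ is again Cohen-Macaulay of the expected pure dimension $\dim M + \dim X - \dim W$, and, more importantly, that there is a canonical isomorphism of mixed Hodge modules on $M$
$$
f^{\ast}\bigl(IC_X^{\mathrm{MHM}}\bigr)\bigl[-\codim_W M\bigr] \simeq IC_{M \cap X}^{\mathrm{MHM}}
$$
up to an appropriate Tate twist. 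Without the Cohen-Macaulay hypothesis, the shifted restriction of $IC_X^{\mathrm{MHM}}$ would in general split off additional perverse summands beyond $IC_{M \cap X}^{\mathrm{MHM}}$, so this assumption is essential here.

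Second, applying the Verdier-Riemann-Roch formula for the Hirzebruch class transformation $T_{y,\ast}$ with respect to the regular embedding $f$ and specializing at $y = 1$ then yields the algebraic Gysin identity
$$
f^{!}_{\mathrm{alg}} IT_{1,\ast}(X) = L^{\ast}(\nu_f) \cap IT_{1,\ast}(M \cap X)
$$
inside $H_{\ast}(M; \mathbb{Q})$, where $f^{!}_{\mathrm{alg}}$ is the refined algebraic Gysin map of Fulton. Composing with push-forward along the inclusions $M \cap X \hookrightarrow M \hookrightarrow W$ and invoking the standard compatibility between the algebraic and topological Gysin maps for regular embeddings of smooth complex algebraic varieties yields the desired formula. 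The hardest step I expect is the isomorphism $f^{\ast} IC_X^{\mathrm{MHM}}[-\codim_W M] \simeq IC_{M \cap X}^{\mathrm{MHM}}$ at the level of mixed Hodge modules: while for constructible complexes this is classical once transversality and Cohen-Macaulayness are in place, one has to verify carefully that the Hodge-theoretic enhancement is preserved, which amounts to checking the compatibility of the ordinary and exceptional Hodge pullback functors under the codimension shift forced by transverse intersection.
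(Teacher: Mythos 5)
Your outline for the formal axioms (normalization, multiplicativity, isomorphism invariance, naturality with respect to larger ambient spaces) matches the paper's, and the plan to reduce the Gysin axiom to an algebraic Gysin restriction formula and then compare with the topological Gysin map is structurally the same. However, there are genuine gaps in the way you propose to execute that main step, and a misattribution of the role of the Cohen--Macaulay hypothesis.

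First, you claim that Cohen--Macaulayness is what guarantees the isomorphism $f^{\ast}IC^H_X[-\operatorname{codim}_W M] \simeq IC^H_{M\cap X}$ (up to Tate twist), and that without it extra perverse summands would appear. This is not where the hypothesis enters: this restriction isomorphism is a consequence of Whitney transversality of $M$ to a stratification of $X$ alone (using that $\operatorname{rat}$ is conservative). What the Cohen--Macaulay condition is actually needed for is Sierra's homological Kleiman--Bertini theorem, which provides \emph{Tor-independence} of $X$ and a generic translate of $M$ in the ambient Grassmannian. Tor-independence is entirely absent from your proposal, yet it is the crucial algebro-geometric transversality notion that makes the whole argument go through: it guarantees both that $Y = M \cap X \hookrightarrow X$ is a regular embedding with vanishing excess bundle (so the embedding is \emph{tight}, with topological normal bundle realizing the algebraic one) and that the algebraic Gysin map agrees with the topological one on algebraic cycles of the \emph{singular} variety $X$. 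Your appeal to a "standard compatibility between algebraic and topological Gysin maps for regular embeddings of smooth complex algebraic varieties" does not suffice, since the comparison must be carried out on $X$, which is singular; the paper proves this comparison (Theorem~\ref{alg and top gysin}) precisely under the Tor-independence hypothesis via the orientation class of the regular embedding.

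Second, the formula you invoke — a Verdier--Riemann--Roch for the Hirzebruch transformation $T_{y,\ast}$ with respect to a regular closed embedding — is not an available black box. The only VRR for $MHC_y$ in the literature concerns \emph{smooth} pullbacks (Proposition~\ref{prop.mhcvrrsmpullb}); for a regular embedding there is no such general statement at the mixed-Hodge-module level. The algebraic Gysin identity
$g^{!}_{\alg}IT_{1,\ast}(X) = L^{\ast}(\nu) \cap IT_{1,\ast}(Y)$
for $g\colon Y = M \cap X \hookrightarrow X$ is precisely the hard theorem (Theorem~6.30 of \cite{banagllgysin}, restated as Theorem~\ref{theorem Verdier-Riemann-Roch type formula}), whose proof requires the \emph{upwardly normally nonsingular} hypothesis — a condition on the exceptional divisor in the blow-up $\operatorname{Bl}_{Y\times 0}(X \times \mathbb{C})$. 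Verifying that condition in the transverse setup is exactly what Section~\ref{Blow-up and transversality} of the paper is for (Theorem~\ref{main result blowup}, which controls Whitney stratifications of strict transforms). By treating the VRR for regular embeddings as known, you are implicitly assuming the most substantial piece of the argument, and the IC restriction isomorphism alone does not deliver it.
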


In \Cref{review transverse setup}, we discuss the main ingredient for the proof, which is a Verdier-Riemann-Roch type formula derived by the first author in \cite{banagllgysin} for the behavior of the Hodge-theoretic intersection Hirzebruch characteristic classes $IT_{1 \ast}$ under algebraic Gysin restriction with respect to normally nonsingular embeddings of singular spaces.
In order for this formula to fit the axioms of Gysin coherent characteristic classes, we need to apply it in a transverse setup (see \Cref{theorem gysin restriction of hodge theoretic cc}), where Tor-independence turns out to be the correct notion of transversality for subvarieties of an ambient smooth variety.
Since Sierra's Kleiman-Bertini transversality theorem for Tor-independence \cite{sierra} is only available under additional assumptions, we need to incorporate the Cohen-Macaulay condition into our result.
For the Gysin restriction formula to be valid in the transverse setup, we need to handle some technical issues regarding the behavior of Whitney stratifications under blow-up of complex manifolds along transverse submanifolds, see \Cref{Blow-up and transversality}.
Finally, to establish the topological Gysin coherence axiom for the algebraic characteristic class $IT_{1 \ast}$, we compare the algebraic Gysin map with the topological Gysin map in \Cref{Topological and algebraic Gysin restriction}, and find that they coincide at least on algebraic cycles (see \Cref{alg and top gysin}).

Since the Goresky-MacPherson $L$-class is a Gysin coherent characteristic class, the uniqueness theorem for such classes implies that both the classes $IT_{1, \ast}$ and the Goresky-MacPherson $L$-classes coincide on irreducible Cohen-Macaulay subvarieties of Grassmannians after push-forward into the homology of the ambient Grassmannian (see \Cref{main result conjecture}).
Since Schubert varieties in a Grassmannian are Cohen-Macaulay, and their homology injects into the homology of the Grassmannian, we obtain \Cref{conjecture for schubert holds}, which states the following.

\begin{cor*}
The equality $L_{\ast}(X) = IT_{1, \ast}(X)$ holds for all Schubert varieties $X$ in a Grassmannian.
\end{cor*}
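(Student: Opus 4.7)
The plan is to obtain the corollary as a direct consequence of the main theorem of the paper combined with the uniqueness theorem for Gysin coherent characteristic classes from \cite{bw}, applied to a Grassmannian as the ambient smooth variety. Concretely, fix a Grassmannian $G$ and let $X \subset G$ be a Schubert subvariety, with inclusion $i \colon X \hookrightarrow G$. The first step is to invoke two Gysin coherence results in parallel: the result recalled from \cite{bw} that the Goresky-MacPherson $L$-class defines a Gysin coherent characteristic class $(L^\ast_{\GM}, L_{\GM,\ast})$, and the Theorem of this paper (\Cref{proposition it class is l type characteristic class}) that the pair $\mathcal{L} = (\mathcal{L}^\ast, \mathcal{L}_\ast)$ built from $IT_{1,\ast}$ is Gysin coherent on the family $\mathcal{X}_{CM}$ of Cohen-Macaulay subvariety inclusions. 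Both classes have the same smooth part, namely the Hirzebruch $L^\ast$ of the normal bundle, so the uniqueness theorem of \cite{bw} forces $i_\ast L_\ast(X) = i_\ast IT_{1,\ast}(X)$ in $H_\ast(G;\rat)$ for every compact irreducible Cohen-Macaulay subvariety $X \subset G$. This is precisely \Cref{main result conjecture} cited in the introduction.

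The second step is to verify that Schubert varieties belong to $\mathcal{X}_{CM}$. This is the classical Cohen-Macaulay property of Schubert varieties in Grassmannians (and more generally in any flag variety $G/P$), established in the work of Hochster, Kempf, Musili, Ramanathan, De Concini-Lakshmibai, and others via standard monomial theory or Frobenius splitting. Combined with the fact that Schubert varieties are compact and irreducible, this places the inclusion $i \colon X \hookrightarrow G$ in the family $\mathcal{X}_{CM}$, so the conclusion of the previous step applies.

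The third step is to promote the identity $i_\ast L_\ast(X) = i_\ast IT_{1,\ast}(X)$ in $H_\ast(G;\rat)$ to the desired identity $L_\ast(X) = IT_{1,\ast}(X)$ in $H_\ast(X;\rat)$. For this one needs injectivity of the push-forward $i_\ast \colon H_\ast(X;\rat) \rightarrow H_\ast(G;\rat)$. This follows from the classical cell decomposition of $G$ into Schubert cells: the homology of $X$ is freely generated (over $\rat$) by the fundamental classes of the Schubert cells contained in $X$, and the same collection of cells maps under $i_\ast$ to a subset of the free basis of $H_\ast(G;\rat)$ consisting of Schubert classes. Thus $i_\ast$ sends a $\rat$-basis to linearly independent classes, and is injective.

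The main obstacle in principle is the injectivity and Cohen-Macaulay input in steps two and three, but these are standard facts from the theory of Schubert varieties; the substantive mathematical content of the corollary has already been packaged into the Theorem and the uniqueness machinery of \cite{bw}. The argument therefore reduces to assembling these three ingredients in the order above.
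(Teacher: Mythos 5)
Your proposal matches the paper's argument exactly: apply the uniqueness theorem (via \Cref{proposition it class is l type characteristic class} and the Gysin coherence of the Goresky-MacPherson $L$-class from \cite{bw}) to obtain \Cref{main result conjecture}, note that Schubert varieties are Cohen-Macaulay, and cancel $i_\ast$ using the cellular injection $H_\ast(X;\rat) \hookrightarrow H_\ast(G;\rat)$. No differences worth noting.
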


The above equality was conjectured more generally for pure-dimensional compact complex algebraic varieties in \cite[Remark 5.4]{bsy}.
In joint work with Cappell, Maxim, and Shaneson, the second author proved the conjecture in \cite[Cor. 1.2]{cmss1} for orbit spaces $X = Y/G$, with $Y$ a projective $G$-manifold and $G$ a finite group of algebraic automorphisms.
They also showed the conjecture for certain complex hypersurfaces with isolated singularities which are assumed rational homology manifolds \cite[Theorem 4.3]{cmss2}.
The conjecture holds for simplicial projective toric varieties as shown by Maxim and the second author \cite[Corollary 1.2(iii)]{ms}.
In \cite{banaglcovertransfer}, the first author proved that his extension of the Goresky-MacPherson $L$-class to oriented pseudomanifolds that possess Lagrangian structures along strata of odd codimension, introduced in \cite{banagl-lcl}, transfers to the $L$-class of any finite covering space (see Theorem 3.14 in \cite{banaglcovertransfer}).
He deduced from this that the conjecture holds for normal connected complex projective $3$-folds $X$ that have at worst canonical singularities, trivial canonical divisor, and $\operatorname{dim} H^{1}(X; \oO_{X}) > 0$.
If $\xi$ is an oriented PL block bundle over a closed PL Witt space $B$ (e.g. a pure-dimensional complex algebraic variety) with closed $d$-dimensional PL manifold fiber and total space $X$, then $\xi^{!} L_{\ast}(B) = L^{\ast}(\nu_{\xi}) \cap L_{\ast}(X)$ under the block bundle transfer $\xi^{!} \colon H_{\ast}(B; \mathbb{Q}) \rightarrow H_{\ast+d}(X; \mathbb{Q})$ associated to $\xi$, see \cite{banaglbundletrafer}.
Here, $\nu_{\xi}$ is the oriented stable vertical normal PL microbundle of $\xi$.
In fact, it is shown in \cite{banaglko} that the $\operatorname{KO}$-theoretic block bundle transfer $\xi^{!} \colon \operatorname{KO}_{\ast}(B) \otimes \mathbb{Z}[\frac{1}{2}] \rightarrow\operatorname{KO}_{\ast+d}(X) \otimes \mathbb{Z}[\frac{1}{2}]$ sends the Siegel-Sullivan orientation $\Delta(B)$ to $\Delta(X)$.
The Siegel-Goresky-MacPherson $L$-class can be recovered from $\Delta(-)$ by applying the Pontrjagin character.
Note that transfer does not generally commute with localization of homotopy theoretic spectra.
The aforementioned cases of the conjecture concern rational homology manifolds.
Fern\'{a}ndez de Bobadilla and Pallar\'{e}s \cite{fdbp} proved the conjecture for all projective complex algebraic varieties that are rational homology manifolds.
In joint work with Saito \cite{fdbps}, they also gave a different proof for compact instead of projective complex varieties, using the theory of mixed Hodge modules.
On the other hand, Schubert varieties in a Grassmannian are generally singular enough so as not to be rational homology manifolds.

Furthermore, we shall clarify how other algebraic characteristic classes such as Chern classes (see \Cref{example chern class}) and Todd classes (see \Cref{example todd class}) (or their variants for the intersection cohomology sheaves) fit into the framework of Gysin coherence.
In \Cref{proposition todd class is l type characteristic class}, we show that Todd classes are Gysin coherent with respect to the family $\mathcal{X}_{CM}$ of inclusions $X \rightarrow W$ such that $X$ is Cohen-Macaulay.
The proof is similar to that of our main result, but is based on the Verdier-Riemann-Roch formula for the Todd class transformation $\tau_{\ast}$, which was conjectured by Baum-Fulton-MacPherson in \cite[p. 137]{bfm}, and proved by Verdier \cite[p. 214, Theorem 7.1]{verdierintcompl}.
On the other hand, \Cref{thm chern classes are gysin coherent cc} shows that Chern classes are Gysin coherent, which exploits the second author's Verdier-Riemann-Roch type theorem (see \cite{schuertrans}) for the behavior of the Chow homology Chern class transformation $c_{\ast} \colon F(X) \rightarrow A_{\ast}(X)$ on complex algebraically constructible functions under refined Gysin maps associated to transverse intersections in a microlocal context.

We conclude by mentioning that the normally nonsingular expansions derived in \cite[Theorem 7.1]{bw} provide a systematic method for the recursive computation of Gysin coherent characteristic classes in ambient Grassmannians in terms of genera of explicitly constructed characteristic subvarieties.
For the classes discussed in this paper, the computational consequences e.g. in the case of Schubert varieties in a Grassmannian remain open for future study.
Chern classes of Schubert varieties in Grassmannians were computed by Aluffi and Mihalcea in \cite{aluffi}.
Moreover, an algorithm for the computation of Chern classes for Schubert cells in a generalized flag manifold is provided in \cite{am}.
For an extension to the equivariant setting, as well as applications to positivity of non-equivariant Chern classes and related classes, we refer the reader to \cite{amss}.

\textbf{Notation.}
Regarding singular cohomology, singular homology and Borel-Moore homology, we work with rational coefficients and will write these groups as $H^{\ast}(-)$, $H_{\ast}(-)$, and $H_{\ast}^{\BM}(-)$.

\section{Hodge-Theoretic Characteristic Classes}\label{sec.hodgeclasses}
This section provides a brief outline of the theory of Hodge-theoretic characteristic classes on singular algebraic varieties (compare also Section 5 in \cite{banagllgysin}).
After recalling the motivic Hodge Chern class transformation in \Cref{motivic Hodge Chern class transformation} and the twisted Todd transformation of Baum, Fulton, MacPherson in \Cref{definition twisted todd transformation}, we define the motivic Hirzebruch class transformation in \Cref{motivic hirzebruch class transformation}, and finally the intersection Hirzebruch characteristic class $IT_{y \ast}$ in \Cref{intersection generalized todd class}.
For a more detailed exposition of the topic, we refer e.g. to the expository paper \cite{schuermannmsri}.

For an algebraic variety $X$, let $K^{\alg}_{0} (X)$ denote the Grothendieck
group of the abelian category of coherent sheaves of $\oO_X$-modules.
When there is no danger of
confusion with other $K$-homology groups, we shall also write $K_0 (X) = K^{\alg}_0 (X)$.
Let $K^0 (X) = K_{\alg}^0 (X)$ denote the Grothendieck group of
the exact category of algebraic vector
bundles over $X$.
The tensor product $\otimes_{\oO_X}$ induces a
\emph{cap product}
\[ \cap: K^0 (X) \otimes K_0 (X) \longrightarrow K_0 (X),~
  [E] \cap [\Fa] = [E \otimes_{\oO_X} \Fa]. \]
Thus,
\begin{equation} \label{equ.kthtokhomalg}
-\cap [\oO_X]: K^0 (X)\longrightarrow K_0 (X)
\end{equation}
sends a vector bundle $[E]$ to its associated (locally free) sheaf of germs of local
sections $[E\otimes \oO_X]$.
If $X$ is smooth, then $-\cap [\oO_X]$ is an isomorphism.

Let $X$ be a complex algebraic variety and $E$ an algebraic vector bundle
over $X$. For a nonnegative integer $p$, let $\Lambda^p (E)$ denote the
$p$-th exterior power of $E$.
The \emph{total $\lambda$-class} of $E$ is by definition
\[ \lambda_y (E) = \sum_{p\geq 0} \Lambda^p (E)\cdot y^p, \]
where $y$ is an indeterminate functioning as a bookkeeping device.
This construction induces a homomorphism
$\lambda_y (-): K^0_{\alg} (X) \longrightarrow K^0_{\alg} (X)[[y]]$
from the additive group of $K^0 (X)$ to the multiplicative monoid of
the power series ring $K^0 (X)[[y]]$.
Now let $X$ be a smooth variety, let $TX$ denote its
holomorphic tangent bundle and
$T^* X$ its holomorphic cotangent bundle.
Then $\Lambda^p (T^* X)$ is the vector bundle of
holomorphic $p$-forms on $X$.
Its associated sheaf of sections is denoted by $\Omega^p_X$. Thus
\[ [\Lambda^p (T^* X)] \cap [\oO_X] = [\Omega^p_X] \]
and hence
\[ \lambda_y (T^* X) \cap [\oO_X] = \sum_{p=0}^{\dim X} [\Omega^p_X] y^p. \]

Let $X$ be a complex algebraic variety and let
$MHM (X)$ denote the abelian category of M. Saito's algebraic mixed
Hodge modules on $X$.
Totaro observed in \cite{totaro} that Saito's construction of a pure
Hodge structure on the intersection homology of compact varieties
implicitly contains a
definition of certain characteristic homology classes for singular
algebraic varieties.
The following definition is based on this observation and due to
Brasselet, Sch\"{u}rmann and Yokura, \cite{bsy}, see also the expository paper
\cite{schuermannmsri}.
\begin{defn}\label{motivic Hodge Chern class transformation}
The \emph{motivic Hodge Chern class transformation}
\[ MHC_y: K_0 (MHM(X)) \to K^{\alg}_0 (X) \otimes \intg [y^{\pm 1}] \]
is defined by
\[ MHC_y [M]
= \sum_{i,p} (-1)^i [\Ha^i (Gr^F_{-p} DR[M])] (-y)^p. \]
\end{defn}
Here,
$Gr^F_p DR: D^b MHM (X) \to
  D^b_{\operatorname{coh}} (X),$
with $D^b_{\operatorname{coh}} (X)$ the bounded derived category
of sheaves of $\oO_X$-modules with coherent cohomology sheaves,
denotes the functor of triangulated categories
constructed by M. Saito, see
\cite[\S 2.3]{saito88},
\cite[\S 1]{saito00},
\cite[\S 3.10]{saito90},
obtained by taking a suitable filtered de Rham complex of the filtered
holonomic $D$-module underlying a mixed Hodge module.
For every $p\in \intg$, these functors induce functors between the
associated Grothendieck groups, with $Gr^F_p DR[M] \simeq 0$ for a given $M$ and almost all $p$.

A flat morphism $f:X\to Y$ gives rise to a flat pullback
$f^*: \operatorname{Coh} (Y)\to \operatorname{Coh}(X)$
on coherent sheaves, which is exact and hence induces a flat pullback
$f^*_K: K^{\alg}_0 (Y)\to K^{\alg}_0 (X)$. This applies in particular
to smooth morphisms and is then often called smooth pullback.
An arbitrary algebraic morphism $f:X \to Y$
(not necessarily flat) induces a homomorphism
\[ f^*: K_0 (MHM(Y)) \longrightarrow K_0 (MHM (X)) \]
which corresponds under the forgetful functor
$\operatorname{rat}: D^b MHM (-)\to D^b_c (-;\rat)$
to $f^{-1}$ on constructible complexes of sheaves.
(Additional remarks on $\operatorname{rat}$ are to be found further below.)
We record Sch\"{u}rmann's \cite[Cor. 5.11, p. 459]{schuermannmsri}:
\begin{prop} \label{prop.mhcvrrsmpullb}
(Verdier-Riemann-Roch for smooth pullbacks.)
For a smooth morphism $f:X\to Y$ of complex algebraic varieties,
the Verdier Riemann-Roch formula
\[ \lambda_y (T^*_{X/Y}) \cap f^*_K MHC_y [M] =
  MHC_y (f^* [M]) = MHC_y [f^* M] \]
holds for $M \in D^b MHM (Y),$ where $T^*_{X/Y}$ denotes
the relative cotangent bundle of $f$.
\end{prop}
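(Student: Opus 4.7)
The plan is to trace the asserted identity through the definition of $MHC_y$ in terms of the filtered de Rham functor $Gr^F_{-p} DR$, and then invoke M.~Saito's description of how this functor transforms under smooth pullback.

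First, all three expressions are additive in $[M] \in K_0(MHM(Y))$, so it suffices to establish the equality for the class of a single bounded complex $M \in D^b MHM(Y)$. The equality $MHC_y(f^*[M]) = MHC_y[f^*M]$ is then merely the compatibility of $MHC_y$ with the fact that $f^*$ descends from a functor of triangulated categories to a homomorphism of Grothendieck groups, so the substantive content is the first equality
\[
\lambda_y(T^*_{X/Y}) \cap f^*_K MHC_y[M] \;=\; MHC_y[f^*M].
\]

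To prove it, I would invoke Saito's compatibility of the filtered de Rham complex with smooth pullback (\cite{saito88}, \cite{saito90}): for a smooth morphism $f$ of relative dimension $d$, there is a natural isomorphism in $D^b_{\operatorname{coh}}(X)$ of the form
\[
Gr^F_{-p} DR(f^*M) \;\simeq\; \bigoplus_{q+r = p} L f^*\bigl(Gr^F_{-q} DR(M)\bigr) \otimes_{\oO_X} \Omega^r_{X/Y}[-r].
\]
This decomposition reflects the factorization of $\Omega^{\bullet}_X$ through $\Omega^{\bullet}_{X/Y} \otimes_{\oO_X} f^*\Omega^{\bullet}_Y$, together with the fact that the Hodge filtration on $f^*M$ is obtained by extension of scalars from that of $M$ along the flat morphism $f^{-1}\oO_Y \to \oO_X$. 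Passing to classes in $K^{\alg}_0(X)\otimes \intg[y^{\pm 1}]$ via alternating sums of cohomology sheaves, and regrouping the resulting double series, one obtains
\[
\Bigl(\sum_{r\ge 0} [\Omega^r_{X/Y}] y^r\Bigr) \cdot \Bigl(\sum_{i,q} (-1)^i f^*_K[\Ha^i(Gr^F_{-q} DR(M))](-y)^q\Bigr).
\]
The first factor is $\lambda_y(T^*_{X/Y})$, since $[\Omega^r_{X/Y}] = [\Lambda^r T^*_{X/Y}]\cap[\oO_X]$, and the second factor is $f^*_K MHC_y[M]$ by definition, yielding the claim.

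The main obstacle is to justify Saito's smooth-pullback compatibility at the level of the associated graded of the filtered de Rham complex with precisely the normalization and sign conventions adopted in \cite{bsy} for $MHC_y$. Two conventions must be reconciled: the distinction between $f^*$ and $f^!$ on mixed Hodge modules for smooth $f$ (which differ by a Tate twist and a shift by $2d$), and the $(-y)^p$ versus $y^r$ bookkeeping arising from the alternating sign in the definition of $MHC_y$ contrasted with the positive sign in the definition of $\lambda_y$. Once these identifications are pinned down, the remaining manipulations take place entirely in $K^{\alg}_0(X)\otimes \intg[y^{\pm 1}]$ and are formal.
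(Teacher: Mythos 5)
The paper itself does not prove this proposition; it is recorded verbatim as a citation to Sch\"{u}rmann's \cite[Cor.\ 5.11, p.\ 459]{schuermannmsri}, so there is no in-text argument against which to compare your attempt. Your sketch does correctly identify the proof strategy used in that reference: Saito's compatibility of $Gr^F DR$ with smooth pullback yields a decomposition of $Gr^F_{-p}DR(f^*M)$ over the relative forms $\Omega^r_{X/Y}$, and since $f$ is flat (so $Lf^* = f^*$ in $K$-theory) and $\Omega^r_{X/Y}$ is locally free (so tensoring commutes with taking cohomology sheaves), the $K$-theoretic regrouping factors out $\lambda_y(T^*_{X/Y})$ as claimed. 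The point you flag --- reconciling Saito's shift and Tate-twist conventions for $f^*$ versus $f^!$ with the $(-y)^p$ bookkeeping in the definition of $MHC_y$ --- is indeed the only substantive technical step; since you name it but defer it to \cite{saito88}, \cite{saito90}, your argument remains an outline, but it is the correct outline and matches what the cited reference actually does.
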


Let $E$ be a complex vector bundle and let $a_i$ denote the Chern roots of $E$.
In \cite{hirzebruch},
Hirzebruch introduced a cohomological characteristic class
\[ T^*_y (E) = \prod_{i=1}^{\rk E} Q_y (a_i),
\]
where $y$ is an indeterminate, coming from the power series
\[ Q_y (a) = \frac{a (1+y)}{1-e^{-a (1+y)}} - a y \in \rat [y][[a]]. \]
If $R$ is an integral domain over $\rat$, then a power series
$Q(a)\in R[[a]]$ is called \emph{normalized} if it starts with $1$, i.e.
$Q(0)=1$.
With $R=\rat [y],$ we have $Q_y (0)=1$, so $Q_y (a)$ is normalized.
For $y=0,$
\begin{equation} \label{equ.t0istd}
T^*_0 (E) = \prod_{i=1}^{\rk E} \frac{a_i}{1-e^{-a_i}} = \td^* (E)
\end{equation}
is the classical Todd class of $E$, while
for $y=1,$
\begin{equation} \label{equ.todd1ishirzelcohom}
T^*_1 (E) = \prod_{i=1}^{\rk E} \frac{a_i}{\tanh a_i} = L^* (E)
\end{equation}
is the Hirzebruch $L$-class of the vector bundle $E$.
We shall also need a certain unnormalized version of $Q_y (a)$:
Let
\[ \widetilde{Q}_y (a) = \frac{a (1+ye^{-a})}{1-e^{-a}} \in \rat [y][[a]] \]
and set
\[ \widetilde{T}^*_y (E) = \prod_{i=1}^{\rk E} \widetilde{Q}_y (a_i). \]
Note that $\widetilde{Q}_y (0) = 1+y \not= 1$, whence
$\widetilde{Q}_y (a)$ is unnormalized.
The relation
\[ (1+y) Q_y (a) = \widetilde{Q}_y ((1+y)a) \]
implies:
\begin{prop} \label{prop.relunnormtynormty}
If $E$ is a complex vector bundle of complex rank $r$, then for the degree $2i$ components:
\[ \widetilde{T}^i_y (E) = (1+y)^{r-i} T^i_y (E). \]
\end{prop}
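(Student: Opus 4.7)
The plan is to derive the claimed relation directly from the functional identity
\[ (1+y) Q_y (a) = \widetilde{Q}_y ((1+y)a) \]
stated just before the proposition. Everything else is bookkeeping with homogeneous components in the Chern roots.

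First, I would rewrite the identity in the form $\widetilde{Q}_y (a) = (1+y) Q_y (a/(1+y))$, obtained by substituting $a \to a/(1+y)$ (working formally over $\rat[y, y^{-1}, (1+y)^{-1}][[a]]$, or, more cleanly, after clearing denominators and noting that the final equality is a polynomial identity in Chern roots with coefficients in $\rat[y]$, so the verification over the larger ring suffices). Taking the product over the Chern roots $a_1,\ldots,a_r$ of $E$ yields
\[ \widetilde{T}^*_y (E) = \prod_{i=1}^r \widetilde{Q}_y (a_i) = (1+y)^r \prod_{i=1}^r Q_y \!\left( \tfrac{a_i}{1+y} \right). \]

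Second, I would invoke homogeneity. By construction $T^*_y (E) = \prod_{i=1}^r Q_y (a_i)$, and its degree $2i$ component $T^i_y (E)$ is a symmetric polynomial in the $a_j$'s that is homogeneous of degree $i$ in the Chern roots. Consequently, replacing each $a_j$ by $a_j/(1+y)$ rescales this component by $(1+y)^{-i}$, giving
\[ \prod_{j=1}^r Q_y \!\left( \tfrac{a_j}{1+y} \right) \Big|_{\deg = 2i} = (1+y)^{-i} \, T^i_y (E). \]

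Combining the two displays yields $\widetilde{T}^i_y (E) = (1+y)^{r-i} T^i_y (E)$, as required. I do not expect any genuine obstacle here: the only subtlety is being careful that the functional identity holds in a ring where $1+y$ is invertible (so that the substitution $a \mapsto a/(1+y)$ is legal), but since the final statement is a polynomial identity in the Chern roots with coefficients in $\rat[y]$, one can equivalently verify it by clearing the $(1+y)^{r-i}$ denominator and comparing homogeneous pieces directly.
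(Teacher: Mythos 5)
Your proof is correct and is essentially the argument the paper intends: the paper does not spell out a proof but simply states that the relation $(1+y)\,Q_y(a) = \widetilde{Q}_y((1+y)a)$ ``implies'' the proposition, and your substitution $a\mapsto a/(1+y)$ followed by the homogeneity/rescaling of the degree-$2i$ component is the natural way to make that implication explicit. Your caveat about working over a ring where $1+y$ is invertible is well taken but harmless, exactly as you note.
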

More conceptually, we have the following formula for the
unnormalized class:
\begin{prop} \label{prop.unnormtytdtimeschlambda}
For any complex vector bundle $E$, we have
\[ \widetilde{T}^*_y (E) = \td^* (E) \cup \ch^* (\lambda_y (E^*)). \]
\end{prop}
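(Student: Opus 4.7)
The plan is to verify the identity by a direct computation on Chern roots, using the splitting principle together with the multiplicativity of the Chern character and of $\lambda_y$. The proof should be quite short, since the assertion is essentially a rearrangement of the defining power series.

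First, I would reduce to the line bundle case via the splitting principle. Both sides of the claimed equality are multiplicative in direct sums of vector bundles: the right-hand side because $\td^*$ is multiplicative, $\ch^*$ is a ring homomorphism, and $\lambda_y(E^* \oplus F^*) \cong \lambda_y(E^*) \otimes \lambda_y(F^*)$; the left-hand side because $\widetilde{T}^*_y$ is defined as a product over Chern roots. Hence it suffices to treat the case in which $E = L_1 \oplus \cdots \oplus L_r$ splits as a sum of line bundles with Chern roots $a_i = c_1(L_i)$.

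Second, I would compute the right-hand side in this split situation. Since $L_i^*$ has first Chern class $-a_i$, one has $\lambda_y(L_i^*) = 1 + y [L_i^*]$, so $\ch^*(\lambda_y(L_i^*)) = 1 + y e^{-a_i}$. Multiplicativity gives
\[
\ch^* \lambda_y(E^*) = \prod_{i=1}^{r} (1 + y e^{-a_i}),
\]
while by definition $\td^*(E) = \prod_{i=1}^{r} \frac{a_i}{1 - e^{-a_i}}$. Taking the cup product yields
\[
\td^*(E) \cup \ch^* \lambda_y(E^*) = \prod_{i=1}^{r} \frac{a_i (1 + y e^{-a_i})}{1 - e^{-a_i}} = \prod_{i=1}^{r} \widetilde{Q}_y(a_i) = \widetilde{T}^*_y(E),
\]
which is the desired formula.

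The only step requiring any care is the verification of $\ch^*(\lambda_y(L^*)) = 1 + y e^{-c_1(L)}$ and the multiplicativity statement for $\lambda_y$ under direct sums; both are classical properties of exterior powers. No additional machinery beyond the splitting principle is required, so there is no real obstacle.
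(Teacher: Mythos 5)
Your proof is correct and takes essentially the same approach as the paper: both reduce to a Chern-root computation, using the identity $\ch^*(\lambda_y(E^*)) = \prod_i (1+y e^{-a_i})$. The only difference is that you derive this identity from the splitting principle, whereas the paper simply cites it from Hirzebruch--Berger--Jung.
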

\begin{proof}
The Chern character is given by
\[ \ch^* (\lambda_y (E^*)) =
   \prod_{i=1}^{\rk E} (1+ye^{-a_i}), \]
with $a_i$ the Chern roots of $E$,
see \cite[p. 11]{hbj}.
Thus
\begin{align*}
\widetilde{T}^*_y (E)
&= \prod_i \frac{a_i (1+ye^{-a_i})}{1-e^{-a_i}}
   = \prod_i \frac{a_i}{1-e^{-a_i}}
     \prod_i (1+ye^{-a_i}) \\
&= \td^* (E) \cup \ch^* (\lambda_y (E^*)).
\end{align*}
\end{proof}
Let $\tau_*: K_0 (X) \longrightarrow H^\BM_{2*} (X)\otimes \rat$
denote the Todd class transformation of Baum, Fulton, MacPherson.
We review, to some extent, construction and properties of this transformation.
Let
\[ \alpha^*: K^0_{\alg} (X) \longrightarrow K^0_{\operatorname{top}} (X) \]
be the forget map which takes an algebraic vector bundle to its
underlying topological vector bundle.
Composing with the Chern character, one obtains a transformation
\[ \tau^* = \ch^* \circ \alpha^*: K^0_{\alg} (X) \longrightarrow H^{2*} (X;\rat), \]
see \cite[p. 180]{bfm2}.
Baum, Fulton and MacPherson construct with the use of Bott periodicity a corresponding
homological version
\[ \alpha_*: K_0^{\alg} (X) \longrightarrow K_0^{\operatorname{top}} (X) \]
for quasi-projective varieties $X$.
Composing with the homological Chern character
\[ \ch_*: K^{\operatorname{top}}_0 (X) \longrightarrow
  H^\BM_{2*} (X;\rat), \]
where $H^\BM_*$ denotes Borel-Moore homology,
they obtain a transformation
\[ \tau_* = \ch_* \circ \alpha_*: K_0^{\alg} (X) \longrightarrow H^\BM_{2*} (X;\rat). \]
An algebraic version of this transformation is in fact available for any
algebraic scheme over a field and generalizes the
Grothendieck Riemann-Roch theorem to singular varieties.

\begin{remark} \label{rem.bfmtoddtochow}
Let $A_* (V)$ denote Chow homology of a variety $V$, i.e.
algebraic cycles in $V$ modulo rational equivalence.
Then there is a transformation
\[ \tau_*: K_0^{\alg} (X) \longrightarrow A_* (X)\otimes \rat \]
such that for a complex algebraic variety $X$, the diagram
\[ \xymatrix{
K_0^{\alg} (X) \ar[rd]^{\tau_*} \ar[d]_{\tau_*} & \\
A_* (X)\otimes \rat \ar[r]_{\cl} & H^\BM_{2*} (X;\rat)
} \]
commutes, where $\cl$ is the cycle map; see the first commutative
diagram on p. 106 of \cite[(0.8)]{bfm}.
The construction of $\tau_*$ to Chow homology is described in
Fulton's book \cite[p. 349]{fultonintth}.
Thus Todd classes are
algebraic cycles with rational coefficients that are well-defined up to rational equivalence.
\end{remark}

According to \cite[Theorem, p. 180]{bfm2}, $\tau_*$ and $\tau^*$ are compatible
with respect to cap products, i.e. the diagram
\[ \xymatrix@C=50pt{
K^0 (X) \otimes K_0 (X) \ar[r]^{\tau^* \otimes \tau_*} \ar[d]_\cap
& H^* (X;\rat) \otimes H^\BM_* (X;\rat) \ar[d]^\cap \\
K_0 (X) \ar[r]^{\tau_*} & H^\BM_* (X;\rat)
} \]
commutes. Thus, if $E$ is a vector bundle and $\Fa$ a coherent sheaf on $X$, then
\begin{equation} \label{equ.bfmtauofcap}
\tau_* ([E] \cap [\Fa]) = \ch^* (E) \cap \tau_* [\Fa].
\end{equation}
For smooth $X$,
\[ \tau_* [\oO_X] = \td^* (TX)\cap [X] = T^*_0 (TX)\cap [X]. \]
So if $E$ is a vector bundle on a smooth variety, then
\begin{equation} \label{taubfmecaposmoothx}
\tau_* ([E] \cap [\oO_X]) = (\ch^* (E) \cup \td^* (TX))\cap [X].
\end{equation}

For locally complete intersection morphisms $f:X\to Y$,
Gysin maps
\[ f^*_\BM: H^\BM_* (Y) \longrightarrow H^\BM_{*-2d} (X) \]
have been defined by Verdier \cite[\S 10]{verdierintcompl},
and Baum, Fulton and MacPherson \cite[Ch. IV, \S 4]{bfm}, where
$d$ denotes the (complex) virtual codimension of $f$.
Thus for a regular closed embedding $g$, there is a Gysin
map $g^*_\BM$ on Borel-Moore homology, which we shall also
write as $g^{!}_{\alg}$, and for a smooth morphism
$f$ of relative dimension $r$, there is a smooth pullback
$f^*_\BM: H^\BM_* (Y) \to H^\BM_{*+2r} (X)$.
Baum, Fulton and MacPherson conjectured and Verdier showed:
\begin{prop} \label{prop.bfmvrrsmpullb}
(Verdier-Riemann-Roch for smooth pullbacks.)
For a smooth morphism $f:X\to Y$ of complex algebraic varieties
and $[\Fa] \in K^{\alg}_0 (Y)$,
\[ \td^* (T_{X/Y}) \cap f^*_\BM \tau_* [\Fa] = \tau_* (f^*_K [\Fa]). \]
\end{prop}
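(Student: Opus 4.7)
The approach I would take is the standard dévissage on $K_0^{\alg}(Y)$: reduce by Noetherian induction on dimension of support and by resolution of singularities to the case where $Y$ (and hence $X$) is smooth, where the formula becomes a direct computation involving the Whitney-sum multiplicativity of the Todd class.

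\textbf{Smooth case.} If $Y$ is smooth, then so is $X$, and by \eqref{equ.kthtokhomalg} every class in $K_0^{\alg}(Y)$ has the form $[E] \cap [\oO_Y]$ for an algebraic vector bundle $E$. Flatness of $f$ yields $f^*_K([E] \cap [\oO_Y]) = [f^*E] \cap [\oO_X]$. Applying \eqref{taubfmecaposmoothx} to both sides of the desired identity, together with naturality of $\ch^*$ under pullback and compatibility of $f^*_{\BM}$ with cap products by pulled-back cohomology classes, the formula reduces to
\[
\td^*(TX) = \td^*(T_{X/Y}) \cup \td^*(f^*TY),
\]
which is the Whitney-sum multiplicativity of the Todd class applied to $0 \to T_{X/Y} \to TX \to f^*TY \to 0$.

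\textbf{Reduction to the smooth case.} Filter $K_0^{\alg}(Y)$ by dimension of support; by Noetherian induction on $\dim \operatorname{supp}\Fa$ it suffices to treat $[\Fa] = j_*[\oO_V]$ for an irreducible closed subvariety $j \colon V \hookrightarrow Y$. Choose a resolution of singularities $\pi \colon \widetilde V \to V$; then the difference $(j\pi)_*[\oO_{\widetilde V}] - j_*[\oO_V]$ is represented by a class supported on a proper closed subset of $V$ of strictly smaller dimension, so by the inductive hypothesis it suffices to verify the identity for $(j\pi)_*[\oO_{\widetilde V}]$. Base-changing $j\pi$ along $f$ yields a Cartesian square that is Tor-independent by flatness of $f$, and whose base-changed left vertical morphism $\widetilde f \colon X \times_Y \widetilde V \to \widetilde V$ is again smooth, with $T_{\widetilde f}$ the pullback of $T_f$ along the base-changed horizontal map. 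Combining the BFM functoriality $\tau_* \circ (j\pi)_* = (j\pi)_* \circ \tau_*$ for proper push-forward with the K-theoretic base-change $f^*_K \circ (j\pi)_* = (\widetilde{j\pi})_* \circ \widetilde f^*_K$ (immediate from flatness) and the Borel-Moore base-change $f^*_{\BM} \circ (j\pi)_* = (\widetilde{j\pi})_* \circ \widetilde f^*_{\BM}$ of Verdier \cite[\S 10]{verdierintcompl}, the identity for $(j\pi)_*[\oO_{\widetilde V}]$ is reduced to the smooth-case identity for $[\oO_{\widetilde V}]$ under $\widetilde f$, which is already handled.

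\textbf{Main obstacle.} The principal technical ingredient is the base-change formula for the Gysin pullback on Borel-Moore homology across a Tor-independent Cartesian square, which is the essential content of \cite[\S 10]{verdierintcompl}. Once this is granted, the K-theoretic analog is immediate from flatness of $f$, and the remaining steps -- Noetherian dévissage on $K_0^{\alg}(Y)$, Hironaka's resolution of singularities, BFM covariance for proper push-forward, and Whitney-sum multiplicativity of the Todd class -- combine in the standard way to complete the proof.
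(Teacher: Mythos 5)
The paper gives no proof of this proposition: it records it as a known result, stating ``Baum, Fulton and MacPherson conjectured and Verdier showed'' and pointing to \cite{verdierintcompl}, so there is no in-paper argument to compare against. Your d\'{e}vissage argument is a correct reconstruction of the standard proof and is essentially Verdier's own. The smooth case does follow from \eqref{taubfmecaposmoothx}, naturality of $\ch^*$, the identity $f^*_\BM(\alpha\cap[Y]) = f^*\alpha\cap[X]$ for a smooth morphism of smooth varieties, and Whitney-sum multiplicativity of $\td^*$ applied to $0\to T_{X/Y}\to TX\to f^*TY\to 0$. The reduction correctly assembles its ingredients: generation of $K_0^{\alg}(Y)$, modulo classes of lower-dimensional support, by $j_*[\oO_V]$ with $V$ integral; resolution of singularities (available here in characteristic zero); BFM covariance $\tau_*\circ p_* = p_*\circ\tau_*$ for proper $p$; flat base change in $K$-theory; and Verdier's base-change theorem for the smooth Gysin pullback on Borel--Moore homology. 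One step left tacit but needed when passing $\td^*(T_{X/Y})$ across the push-forward along $X\times_Y\widetilde V\to X$ is the projection formula for proper push-forward on Borel--Moore homology together with the identification of the pullback of $T_{X/Y}$ to $X\times_Y\widetilde V$ with $T_{\widetilde f}$ (the relative tangent bundle of a smooth morphism is stable under base change); both are standard and do not affect the soundness of your argument.
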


Yokura \cite{yok} twisted $\tau_*$ by a Hirzebruch-type variable $y$:
\begin{defn}\label{definition twisted todd transformation}
The \emph{twisted Todd transformation}
\[ \td_{1+y}: K_0 (X) \otimes \intg [y^{\pm 1}] \longrightarrow
                H^\BM_{2*} (X)\otimes \rat [y^{\pm 1}, (1+y)^{-1}] \]
is given by
\[ \td_{1+y} [\Fa] := \sum_{k\geq 0} \tau_k [\Fa]\cdot \frac{1}{(1+y)^k}, \]
where the Baum-Fulton-MacPherson transformation
$\tau_*$ is extended linearly over $\intg [y^{\pm 1}]$, and
$\tau_k$ denotes the degree $2k$-component of $\tau_*$.
\end{defn}
\begin{remark} \label{rem.twistedtoddtochow}
Regarding the transformation $\tau_*$ as taking values in Chow
groups $A_* (-)\otimes \rat$ (cf. Remark \ref{rem.bfmtoddtochow}),
the above definition yields
a twisted Todd transformation
\[ \td_{1+y}: K_0 (X) \otimes \intg [y^{\pm 1}] \longrightarrow
                A_* (X)\otimes \rat [y^{\pm 1}, (1+y)^{-1}], \]
which commutes with the Borel-Moore twisted Todd transformation under the
cycle map.
\end{remark}
The definition of the motivic Hirzebruch class transformation below is due to
Brasselet, Sch\"{u}rmann and Yokura \cite{bsy}, see also Sch\"urmann's expository paper
\cite{schuermannmsri}.
\begin{defn}\label{motivic hirzebruch class transformation}
The \emph{motivic Hirzebruch class transformation} is
\[ MHT_{y*} := \td_{1+y} \circ MHC_y:
  K_0 (MHM(X)) \longrightarrow H^\BM_{2*} (X) \otimes
    \rat [y^{\pm 1}, (1+y)^{-1}]. \]
\end{defn}
For the intersection Hodge module $IC^H_X$ on a complex
purely $n$-dimensional variety $X$, we use the convention
\[ IC^H_X := j_{!*} (\rat^H_U [n]), \]
which agrees with \cite[p. 444]{schuermannmsri} and
\cite[p. 345]{ps}.
Here, $U \subset X$ is smooth, of pure dimension $n$,
Zariski-open and dense, and $j_{!*}$ denotes the
intermediate extension of mixed Hodge modules associated
to the open inclusion $j:U \hookrightarrow X$.
The underlying perverse sheaf is
$\operatorname{rat} (IC^H_X) = IC_X,$ the intersection chain sheaf, where
$\operatorname{rat}: MHM (X) \to \Per (X) = \Per (X;\rat)$ is the faithful and exact
functor that sends a mixed Hodge module to its underlying perverse sheaf.
Here, $\Per (X)$ denotes perverse sheaves on $X$ which are constructible
with respect to \emph{some} algebraic stratification of $X$.
This functor extends to a functor
$\operatorname{rat}: D^b MHM (X) \to D^b_c (X) = D^b_c (X;\rat)$ between bounded derived
categories. For every object of $D^b_c (X)$ there exists \emph{some}
algebraic stratification with respect to which the object is constructible, and these
stratifications will generally vary with the object.
Recall that a functor $F$ is \emph{conservative}, if
for every morphism $\phi$ such that $F(\phi)$ is an isomorphism,
$\phi$ is already an isomorphism.
Faithful functors on balanced categories (such as abelian or
triangulated categories) are conservative.
According to \cite[p. 218, Remark (i)]{saito90ext},
$\operatorname{rat}: D^b MHM (X) \to D^b_c (X)$ is not faithful.
We recall Lemma 5.10 from \cite{banagllgysin}:
\begin{lemma} \label{lem.derratconservative}
The functor
$\operatorname{rat}: D^b MHM (X) \to D^b_c (X)$ is conservative.
\end{lemma}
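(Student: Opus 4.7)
The plan is to reduce the problem from the derived category to the heart by using t-structures, and then exploit that the restriction of $\operatorname{rat}$ to the hearts, namely $\operatorname{rat}\colon MHM(X) \to \Per(X)$, is faithful and exact (hence conservative on abelian categories).

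More precisely, let $\phi\colon M \to N$ be a morphism in $D^b MHM(X)$ such that $\operatorname{rat}(\phi)$ is an isomorphism in $D^b_c(X)$. I would first recall that $D^b MHM(X)$ carries a t-structure whose heart is $MHM(X)$, while $D^b_c(X)$ is equipped with the perverse t-structure whose heart is $\Per(X)$. With respect to these t-structures, the functor $\operatorname{rat}$ is t-exact, since on hearts it takes a mixed Hodge module to its underlying perverse sheaf. Hence for every $i\in\intg$, the cohomology operation commutes with rat:
\[
{}^p\!H^i(\operatorname{rat}(\phi)) = \operatorname{rat}(H^i(\phi)),
\]
where $H^i$ denotes cohomology with respect to the t-structure on $D^b MHM(X)$ and ${}^p\!H^i$ the perverse cohomology on $D^b_c(X)$.

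Next, I would use that a morphism in a bounded derived category of an abelian category is an isomorphism if and only if it induces isomorphisms on all cohomology objects with respect to the natural t-structure; and the same criterion holds in $D^b_c(X)$ for the perverse t-structure. Thus the hypothesis that $\operatorname{rat}(\phi)$ is an isomorphism forces ${}^p\!H^i(\operatorname{rat}(\phi))$ to be an isomorphism in $\Per(X)$ for every $i$, so by the displayed identity each $\operatorname{rat}(H^i(\phi))$ is an isomorphism in $\Per(X)$.

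The concluding step is to invoke that $\operatorname{rat}\colon MHM(X) \to \Per(X)$ is faithful and exact. A faithful exact functor between abelian categories reflects the vanishing of kernels and cokernels, hence reflects isomorphisms; in other words, it is conservative on the hearts. Therefore $H^i(\phi)$ is an isomorphism in $MHM(X)$ for every $i$, which implies that $\phi$ itself is an isomorphism in $D^b MHM(X)$. The main conceptual point — and the only step that requires any care — is the verification of t-exactness of $\operatorname{rat}$ with respect to the perverse t-structure on the target, which is already built into Saito's construction; everything else is formal from the faithful exactness on hearts. No obstacle from the failure of faithfulness on the derived category arises, because conservativity is a strictly weaker property.
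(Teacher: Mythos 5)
Your argument is correct, and it matches the strategy the paper points to. The paper itself does not reproduce a proof but cites \cite[Lemma 5.10]{banagllgysin} and notes that a cone-based version appears in \cite[Lemma 5.3]{cmss1}: one replaces your cohomology-of-$\phi$ step by forming $C=\operatorname{cone}(\phi)$, observing $\operatorname{rat}(C)\simeq 0$ since $\operatorname{rat}$ is exact (triangulated), and then concluding $C\simeq 0$ by exactly the same reduction to the heart — all $H^i(C)$ vanish because $\operatorname{rat}(H^i(C))={}^p\!H^i(\operatorname{rat}(C))=0$ and $\operatorname{rat}$ is faithful exact on $MHM(X)$. These are two dressings of one argument: both hinge on t-exactness of $\operatorname{rat}$ (the standard t-structure on $D^bMHM(X)$ maps to the perverse one on $D^b_c(X)$, as built into Saito's construction) together with conservativity of the faithful exact functor $\operatorname{rat}\colon MHM(X)\to\Per(X)$ on the hearts. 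One small remark on your exposition: that $\operatorname{rat}$ sends the heart into the heart does not by itself entail t-exactness (you need the fact that the derived-category extension is the realization of the exact functor on hearts, or equivalently that it commutes with the relevant truncations), but you correctly flag that this is supplied by Saito's construction, so the gap is only in the phrasing, not the substance.
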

Using cones, this lemma also appears embedded in the proof of
\cite[Lemma 5.3, p. 1752]{cmss1},
see also Exercise 11.2.1 in
Maxim's book \cite{max}.
The module $IC^H_X$ is the unique simple object in the
category $MHM (X)$ which restricts to $\rat_U [n]$ over $U$.
As $U$ is smooth and pure $n$-dimensional,
$\rat^H_U [n]$ is pure of weight $n$.
Since the intermediate extension $j_{!*}$ preserves weights,
$IC^H_X$ is pure of weight $n$.
There is a duality isomorphism (polarization)
$\mathbb{D}^H_X IC^H_X \cong IC^H_X (n).$
Taking $\operatorname{rat}$, this isomorphism induces a self-duality
isomorphism
\[  \mathbb{D}_X IC_X = \mathbb{D}_X \operatorname{rat} IC^H_X \cong
\operatorname{rat} \mathbb{D}^H_X IC^H_X \cong \operatorname{rat} IC^H_X (n) \cong IC_X, \]
if an isomorphism $\rat_U (n) \cong \rat_U$ is chosen.
\begin{defn} \label{intersection generalized todd class}
(\cite{bsy}.)
The \emph{intersection generalized Todd class}
(or \emph{intersection Hirzebruch characteristic class}) is
\[ IT_{y*} (X) := MHT_{y*} [IC^H_X [-n]]
    \in H^\BM_{2*} (X) \otimes \rat [y^{\pm 1}, (1+y)^{-1}]. \]
\end{defn}
Taking up expectations formulated by Cappell and Shaneson in \cite{cs2},
this class started to be investigated in detail by Cappell, Maxim and Shaneson in
\cite{cms}.

\begin{remark} \label{rem.itisalgebraic}
The intersection characteristic class $IT_{y*} (X)$ is represented by
an algebraic cycle by Remark \ref{rem.twistedtoddtochow}.
\end{remark}

\begin{remark}
Later on in the present paper, we will be interested in the specializations $IT_{1,\ast}(X)$ for $y=1$, $IT_{0,\ast}(X)$ for $y=0$ and $IT_{-1,\ast}(X)$ for $y=-1$ in \Cref{intersection generalized todd class}.
Here, we note that specialization $y = 0$ is in fact possible because we actually have $IT_{y \ast} (X) \in H^\BM_{2 \ast} (X) \otimes \rat [y, (1+y)^{-1}]$ by \cite[p. 451, Example 5.2]{schuermannmsri}.
Specialization $y = -1$ is possible in \Cref{intersection generalized todd class} because we actually have $IT_{y \ast} (X) \in H^\BM_{2 \ast} (X) \otimes \rat [y^{\pm 1}]$ as shown in \cite[p. 465, Proposition 5.21]{schuermannmsri}.
\end{remark}

\begin{remark}\label{remark motivic chern class trafo}
Let $K_{0}(\operatorname{var}/X)$ denote the motivic relative Grothendieck group of complex algebraic varieties over $X$.
By definition, $K_{0}(\operatorname{var}/X)$ is the free abelian group generated by isomorphism classes $[f] = [f \colon Z \rightarrow X]$ of morphisms $f$ to $X$ modulo the additivity relation (or scissor relation)
$$
[f] = [f \circ i] + [f \circ j]
$$
for a closed inclusion $i \colon Y \rightarrow X$ with complement $j \colon U = Z \setminus Y \rightarrow X$.
As pointed out in \cite[Remark 5.5]{schuermannmsri}, composition of the transformations $MHC_{y}$ and $MHT_{y}$ with the natural group homomorphism
$$
\chi_{Hdg} \colon K_{0}(\operatorname{var}/X) \rightarrow K_{0}(MHM(X)), \qquad [f \colon Z \rightarrow X] \mapsto [f_{!}\mathbb{Q}_{Z}^{H}],
$$
(see \cite[Corollary 4.10]{schuermannmsri}) yields similar transformations
$$
mC_{y} := MHC_{y} \circ \chi_{Hdg}, \qquad T_{y \ast} := MHT_{y} \circ \chi_{Hdg},
$$
defined on $K_{0}(\operatorname{var}/X)$, which are studied in \cite{bsy}.
\end{remark}

The intersection Hodge modules are multiplicative under the external product
$$
\boxtimes \colon MHM(X) \times MHM(Y) \rightarrow MHM(X \times Y)
$$
as follows (compare also \cite[p. 471]{mss}).

\begin{prop}\label{prop multiplicativity of intersection Hodge module}
Let $X$ and $Y$ be pure-dimensional complex algebraic varieties.
Then there is a natural isomorphism $IC^H_{X \times Y} = IC^H_X \boxtimes IC^H_Y$ in $MHM(X \times Y)$.
\end{prop}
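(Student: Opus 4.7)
The plan is to realize both sides of the claimed isomorphism as intermediate extensions of the same pure Hodge module on a common smooth Zariski-open dense subset of $X\times Y$, and to conclude by uniqueness of such extensions. Let $n=\dim X$ and $m=\dim Y$, and choose smooth Zariski-open dense subsets $U\subset X$ and $V\subset Y$ of pure dimensions $n$ and $m$ respectively, with open immersions $j_X\colon U\hookrightarrow X$, $j_Y\colon V\hookrightarrow Y$, and $k:=j_X\times j_Y\colon U\times V\hookrightarrow X\times Y$. Since $U\times V$ is smooth, pure of dimension $n+m$, and Zariski-open and dense in $X\times Y$, we have by definition $IC^H_{X\times Y}=k_{!*}(\rat^H_{U\times V}[n+m])$, while $IC^H_X=(j_X)_{!*}(\rat^H_U[n])$ and $IC^H_Y=(j_Y)_{!*}(\rat^H_V[m])$.

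The first step is the canonical identification
\[ \rat^H_U[n]\boxtimes \rat^H_V[m]\;\cong\;\rat^H_{U\times V}[n+m] \]
of pure Hodge modules on $U\times V$. This follows from the multiplicativity of the constant Hodge module under external products, combined with the perverse shift convention that turns the external product of the two constant perverse sheaves into the constant perverse sheaf on the product.

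The second and main step is to show that the intermediate extension commutes with the external product for the open immersions in question, i.e.
\[ k_{!*}(\rat^H_U[n]\boxtimes \rat^H_V[m]) \;\cong\; (j_X)_{!*}(\rat^H_U[n])\boxtimes (j_Y)_{!*}(\rat^H_V[m]). \]
The approach I would take is to invoke Saito's classification of simple pure Hodge modules with given strict support as intermediate extensions of generically defined polarizable variations of Hodge structure. As recalled in the text, $IC^H_X$ and $IC^H_Y$ are simple pure Hodge modules of weights $n$ and $m$ with strict supports $X$ and $Y$. It then suffices to verify that $IC^H_X\boxtimes IC^H_Y$ is a simple pure Hodge module of weight $n+m$ with strict support $X\times Y$ that restricts over $U\times V$ to $\rat^H_{U\times V}[n+m]$. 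The restriction property follows from compatibility of external products with open pullback together with the first step. Purity, additivity of weights, and simplicity of external products of simple pure Hodge modules with the stated strict supports are standard properties of Saito's theory.

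The main obstacle is the simplicity/strict-support statement for the external product of the two intersection Hodge modules, since it depends essentially on Saito's structural results. As a fallback, the same conclusion can be obtained by the image characterization $k_{!*}=\operatorname{im}\bigl({}^p\mathcal{H}^0 k_!\to {}^p\mathcal{H}^0 k_*\bigr)$ in the abelian category $MHM(X\times Y)$, using that for open immersions $j_X$ and $j_Y$ both $k_!$ and $k_*$ commute with external products, and that the perverse t-structure on mixed Hodge modules is compatible with $\boxtimes$. In either case, uniqueness of the intermediate extension then yields the desired natural isomorphism.
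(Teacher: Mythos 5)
Your plan coincides in overall structure with the paper's: realize $IC^H_{X\times Y}$ as the unique simple object of $MHM(X\times Y)$ restricting to $\rat^H_{U\times V}[n+m]$ over a dense smooth Zariski-open, and then verify that $IC^H_X \boxtimes IC^H_Y$ is simple and has that restriction. The restriction computation you sketch is the same chain of Saito-formalism compatibilities (external product vs.\ open pullback, external product of constants) that the paper spells out.

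The problem is that you explicitly leave the crucial step unproved. You first assert that "simplicity of external products of simple pure Hodge modules with the stated strict supports are standard properties of Saito's theory," and then concede that this simplicity or strict-support statement is in fact "the main obstacle." It is not automatic that the external product of two simple objects in an abelian category is simple, and this is exactly where the content lies. The paper closes this gap by descending to perverse sheaves: take a subobject $A \hookrightarrow IC^H_X \boxtimes IC^H_Y$ in $MHM(X\times Y)$, apply the faithful exact functor $\operatorname{rat}$, use that $\operatorname{rat}$ preserves $\boxtimes$ (\cite[Eq.~(1.5.2)]{saitoformalism}) to identify $\operatorname{rat}(IC^H_X \boxtimes IC^H_Y)$ with $IC_X \boxtimes IC_Y = IC_{X\times Y}$, invoke the classical simplicity of $IC_{X\times Y}$ in $\operatorname{Per}(X\times Y)$ to deduce that $\operatorname{rat}(A)$ is $0$ or everything, and finally use conservativity of $\operatorname{rat}$ (which holds because $\operatorname{rat}$ is faithful on an abelian category) to lift that dichotomy to $MHM(X\times Y)$. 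Your proposed fallback via $k_{!*}=\operatorname{im}\bigl({}^p\mathcal{H}^0 k_! \to {}^p\mathcal{H}^0 k_*\bigr)$ is genuinely different in spirit, but it would require a verification that this image construction commutes with $\boxtimes$ on $MHM$, which you also do not supply. In short, the reduction of the problem is correct, but the lemma that actually makes the argument go through is asserted, not proved; the $\operatorname{rat}$-conservativity trick is the missing ingredient.
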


\begin{proof}
Let $Z$ be a pure $n$-dimensional complex algebraic variety.
The perverse sheaf $IC_{Z}(\mathbb{Q})$ is a simple object in the category of perverse sheaves (see \cite[p. 112, Th\'{e}or\`{e}me 4.3.1]{bbd} or \cite[p. 142, Corollary 8.4.13]{max}).
Since the functor $\operatorname{rat} \colon MHM(Z) \rightarrow \operatorname{Per}(Z)$ is faithful, the module $IC^H_Z$ is the unique simple object in the category $MHM(Z)$ which restricts to $\rat_U^{H} [n]$ over a dense open smooth subset $U$ of $Z$ (see \cite[p. 345, Section 14.1.4]{ps}).

Let $a$ and $b$ denote the dimensions of $X$ and $Y$, respectively.
We shall show that $IC^H_X \boxtimes IC^H_Y$ is simple in $MHM(X \times Y)$ and restricts to $\mathbb{Q}^{H}_{U \times V}[a+b]$, where $U \subset X$ and $V \subset Y$ are dense open smooth subsets.
As for the former claim, let $i \colon A \hookrightarrow IC^H_X \boxtimes IC^H_Y$ be a subobject.
As $\operatorname{rat}$ is exact, its application to $i$ yields a subobject $\operatorname{rat}(i) \colon \operatorname{rat}(A) \hookrightarrow \operatorname{rat}(IC^H_X \boxtimes IC^H_Y)$.
As $\operatorname{rat}$ preserves external products (see \cite[Eq. (1.5.2)]{saitoformalism}), there is a natural isomorphism
$$
\operatorname{rat}(IC^H_X \boxtimes IC^H_Y) = \operatorname{rat}(IC^H_X) \boxtimes \operatorname{rat}(IC^H_Y) = IC_{X} \boxtimes IC_{Y}.
$$
In the category of perverse sheaves, we have $IC_{X} \boxtimes IC_{Y} = IC_{X \times Y}$, as a standard verification of the intersection sheaf axioms shows (see \cite[p. 121, Section 6.3]{gmih2}).
Therefore, we obtain a subobject $\operatorname{rat}(i) \colon \operatorname{rat}(A) \hookrightarrow IC_{X \times Y}$ in $\operatorname{Per}(X \times Y)$.
Since $IC_{X \times Y}$ is a simple object in $\operatorname{Per}(X \times Y)$, we obtain either that $\operatorname{rat}(A) = 0$ or that $\operatorname{rat}(i)$ is an isomorphism.
Faithful functors on balanced categories (such as abelian or triangulated categories) are conservative.
Thus, if $\operatorname{rat}(i)$ is an isomorphism, then $i$ is an isomorphism as well.
On the other hand, let us suppose that $\operatorname{rat}(A) = 0$.
Application of the additive functor $\operatorname{rat}$ to the unique morphism $t \colon A \rightarrow 0$ yields $\operatorname{rat}(t) \colon \operatorname{rat}(A) \rightarrow \operatorname{rat}(0) = 0$, which is an isomorphism since $\operatorname{rat}(A) = 0$.
Using again that $\operatorname{rat}$ is conservative, we conclude that $A = 0$.
This shows that $IC^H_X \boxtimes IC^H_Y$ is simple in $MHM(X \times Y)$.
Next, let us show that there is a natural isomorphism
\begin{equation}\label{equation claim restriction multiplicativity}
(i_{U} \times i_{V})^{\ast}(IC^H_X \boxtimes IC^H_Y) = \mathbb{Q}^{H}_{U \times V}[a+b] \in MHM(U \times V),
\end{equation}
where $i_{U} \colon U \hookrightarrow X$ and $i_{V} \colon V \hookrightarrow Y$ are inclusions of dense open smooth subsets.
We will first establish the isomorphism in $D^{b}MHM(U \times V)$, and then show that it gives rise to one in $MHM(U \times V)$.
For $M \in D^{b}MHM(X)$ and $N \in D^{b}MHM(Y)$, there is a natural isomorphism
$$
(i_{U} \times i_{V})^{\ast}(M \boxtimes N) = i_{U}^{\ast} M \boxtimes i_{V}^{\ast}N \in D^{b}MHM(U \times V)
$$
by \cite[Eq. (3.1.9), p. 13]{saitoformalism}.
Substituting $M = IC^H_X$ and $N = IC^H_Y$, we obtain a natural isomorphism
$$
(i_{U} \times i_{V})^{\ast}(IC^H_X \boxtimes IC^H_Y) = i_{U}^{\ast} IC^H_X \boxtimes i_{V}^{\ast}IC^H_Y \in D^{b}MHM(U \times V).
$$
Furthermore, it follows from $i_{U}^{\ast} IC^H_X = \mathbb{Q}^{H}_{U}[a]$ and $i_{V}^{\ast} IC^H_Y = \mathbb{Q}^{H}_{V}[b]$ that
$$
i_{U}^{\ast} IC^H_X \boxtimes i_{V}^{\ast}IC^H_Y = \mathbb{Q}^{H}_{U}[a] \boxtimes \mathbb{Q}^{H}_{V}[b] = (\mathbb{Q}^{H}_{U} \boxtimes \mathbb{Q}^{H}_{V})[a+b] \in D^{b}MHM(U \times V).
$$
For the projection $p \colon U \times V \rightarrow V$ to the second factor, there are natural isomorphisms
$$
\mathbb{Q}^{H}_{U} \boxtimes \mathbb{Q}^{H}_{V} = p^{\ast}\mathbb{Q}^{H}_{V} = \mathbb{Q}^{H}_{U \times V} \in D^{b}MHM(U \times V)
$$
by \cite[Eq. (3.6.3), p. 15]{saitoformalism} and \cite[Eq. (3.9.1), p. 16]{saitoformalism}, respectively.
By composition, we arrive at a natural isomorphism
\begin{equation}\label{equation claim restriction multiplicativity in derived category}
(i_{U} \times i_{V})^{\ast}(IC^H_X \boxtimes IC^H_Y) = \mathbb{Q}^{H}_{U \times V}[a+b] \in D^{b}MHM(U \times V).
\end{equation}
Since $MHM(-)$ is stable under external product and pullback by open embeddings, it follows from $IC^H_X \in MHM(X)$ and $IC^H_Y \in MHM(Y)$ that $(i_{U} \times i_{V})^{\ast}(IC^H_X \boxtimes IC^H_Y) \in MHM(U \times V)$.
Moreover, we have $\mathbb{Q}^{H}_{U \times V}[a+b] \in MHM(X \times Y)$ because $U \times V$ is smooth.
Finally, application of the functor $H^{0} \colon D^{b}MHM(U \times V) \rightarrow MHM(U \times V)$ to the isomorphism (\ref{equation claim restriction multiplicativity in derived category}) yields the desired isomorphism (\ref{equation claim restriction multiplicativity}).
\end{proof}

\begin{cor}\label{corollary multiplicativity of intersection generalized todd class}
The intersection generalized Todd class of the product $X \times Y$ of complex algebraic pure-dimensional varieties $X, Y$ satisfies
$$
IT_{y*} (X \times Y) = IT_{y*} (X) \times IT_{y*} (Y) \in H^\BM_{2*} (X \times Y) \otimes \rat [y^{\pm 1}, (1+y)^{-1}].
$$
\end{cor}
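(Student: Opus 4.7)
The plan is to reduce the claim to two separate multiplicativity statements: one at the level of the motivic Hodge Chern class transformation $MHC_y$, and one at the level of the Baum-Fulton-MacPherson Todd transformation $\tau_\ast$ (which is what the twisted Todd transformation $\td_{1+y}$ is built out of). Setting $a = \dim X$ and $b = \dim Y$, the previously proved \Cref{prop multiplicativity of intersection Hodge module} together with the shift identity $(M \boxtimes N)[-a-b] = M[-a] \boxtimes N[-b]$ rewrites the input class of $MHT_{y\ast}$ in the definition of $IT_{y\ast}(X \times Y)$ as $[IC^H_X[-a]] \boxtimes [IC^H_Y[-b]]$. So the corollary will follow once multiplicativity of $MHT_{y\ast} = \td_{1+y} \circ MHC_y$ under external products has been established.

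First I would handle $MHC_y$. Since the filtered de Rham functor $Gr^F_p DR$ is compatible with external products of mixed Hodge modules (following Saito, e.g.\ \cite{saitoformalism}), one has a natural isomorphism $Gr^F_{-p} DR[M \boxtimes N] \cong \bigoplus_{p_1 + p_2 = p} Gr^F_{-p_1} DR[M] \boxtimes_{\oO} Gr^F_{-p_2} DR[N]$ in $D^b_{\operatorname{coh}}(X \times Y)$, where $\boxtimes_\oO$ denotes the external tensor product of coherent sheaves. Passing to cohomology sheaves and Grothendieck groups, and keeping track of signs and of the $(-y)^p$ bookkeeping, the definition in \Cref{motivic Hodge Chern class transformation} then yields the multiplicativity relation
\[
MHC_y[M \boxtimes N] \;=\; MHC_y[M] \boxtimes MHC_y[N]
\]
in $K^\alg_0(X \times Y) \otimes \intg[y^{\pm 1}]$, where on the right $\boxtimes$ denotes the external product on algebraic $K$-homology.

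Next I would handle $\td_{1+y}$. The Baum-Fulton-MacPherson Todd class transformation $\tau_\ast$ is multiplicative under external products, i.e.\ $\tau_\ast([\Fa] \boxtimes [\Ga]) = \tau_\ast[\Fa] \times \tau_\ast[\Ga]$ in Borel-Moore homology (this is part of the standard package of properties from \cite{bfm}, \cite{bfm2}). Splitting this identity into bidegrees gives $\tau_k([\Fa] \boxtimes [\Ga]) = \sum_{i+j=k} \tau_i[\Fa] \times \tau_j[\Ga]$, so substituting into \Cref{definition twisted todd transformation} and reindexing the formal power series in $(1+y)^{-1}$ gives
\[
\td_{1+y}([\Fa] \boxtimes [\Ga]) \;=\; \td_{1+y}[\Fa] \times \td_{1+y}[\Ga].
\]
Composing the two multiplicativity statements yields multiplicativity of $MHT_{y\ast}$, which applied to $[IC^H_X[-a]] \boxtimes [IC^H_Y[-b]]$ completes the proof.

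The main potential obstacle is bookkeeping rather than a conceptual difficulty: one has to make sure that the K\"unneth-type isomorphism for $Gr^F_\bullet DR$ of an external product is formulated and used correctly at the level of Grothendieck groups (so that the convolution of the polynomials in $(-y)$ on both sides matches), and that the external products on $K^\alg_0$ and on Borel-Moore homology that one uses are indeed compatible with $\tau_\ast$ in the required sense. Both are standard, but they are the only places where care is needed.
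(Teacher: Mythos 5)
Your proposal is correct and follows essentially the same route as the paper: reduce to multiplicativity of $MHC_y$ under external product (the paper cites \cite[Cor.\ 5.10]{schuermannmsri}, where you instead sketch the underlying K\"unneth decomposition of $Gr^F_\bullet DR$), combine with multiplicativity of $\tau_\ast$ to deduce multiplicativity of $\td_{1+y}$ and hence of $MHT_{y\ast}$, and finally apply this to $[IC^H_X[-a]] \boxtimes [IC^H_Y[-b]]$ using \Cref{prop multiplicativity of intersection Hodge module}.
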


\begin{proof}
By \cite[Corollary 5.10, p. 458]{schuermannmsri}, the motivic Chern class transformation
\[ MHC_y: K_0 (MHM(X)) \to K^{\alg}_0 (X) \otimes \intg [y^{\pm 1}] \]
(see \Cref{motivic Hodge Chern class transformation}) commutes with the external product, that is,
$$
MHC_y([M] \boxtimes [N]) = MHC_y([M]) \boxtimes MHC_y([N])
$$
for $M \in D^{b}MHM(X)$ and $N \in D^{b}MHM(Y)$.
Furthermore, according to \cite[p. 462]{schuermannmsri}, the Todd class transformation
$$
\tau_{\ast} \colon K_{0}^{\alg} (X) \longrightarrow H^\BM_{2*} (X)\otimes \rat
$$
of Baum, Fulton, MacPherson is compatible with the external product in the sense that
$$
\tau_{\ast}([\mathcal{F}] \boxtimes [\mathcal{G}]) = \tau_{\ast}([\mathcal{F}]) \times \tau_{\ast}([\mathcal{G}])
$$
for $[\mathcal{F}] \in K_{0}(X)$ and $[\mathcal{G}] \in K_{0}(Y)$ (see also \cite[Example 18.3.1, p. 360, and Example 19.1.9, p. 377]{fultonintth}, as well as \cite[Property (i), p. 122]{fmp}).
(The cross product on Borel-Moore homology agrees with the cross product on ordinary singular homology under the isomorphism $H_{\ast}^{\BM}(X) \cong H_{\ast}(\overline{X}, \overline{X} - X)$, where $\overline{X}$ is a compactification of $X$ such that $(\overline{X}, \overline{X} - X)$ is a CW pair, see \cite[p. 99, 2.6.19]{chrissginzburg}.)
The latter implies for the twisted Todd transformation
\[
\td_{1+y}: K_{0}^{\alg} (X) \otimes \intg [y^{\pm 1}] \longrightarrow
                H^\BM_{2*} (X)\otimes \rat [y^{\pm 1}, (1+y)^{-1}]
\]
of \Cref{definition twisted todd transformation} that
\begin{align*}
td_{1+y}([\mathcal{F}]) \times td_{1+y}([\mathcal{G}]) &= \left[\sum_{k \geq 0}\tau_{k}([\mathcal{F}]) \frac{1}{(1+y)^k}\right] \times \left[\sum_{l \geq 0}\tau_{l}([\mathcal{G}]) \frac{1}{(1+y)^l}\right] \\
&= \sum_{k, l \geq 0} \tau_{k}([\mathcal{F}]) \times \tau_{l}([\mathcal{G}]) \frac{1}{(1+y)^{k+l}} \\
&= \sum_{m \geq 0} \left[\sum_{k+l = m}\tau_{k}([\mathcal{F}]) \times \tau_{l}([\mathcal{G}])\right] \frac{1}{(1+y)^{m}} \\
&= \sum_{m \geq 0} \tau_{m}([\mathcal{F}] \boxtimes [\mathcal{G}]) \frac{1}{(1+y)^{m}} \\
&= td_{1+y}([\mathcal{F}] \boxtimes [\mathcal{G}])
\end{align*}
for $[\mathcal{F}] \in K_0 (X) \otimes \intg [y^{\pm 1}]$ and $[\mathcal{G}] \in K_0 (Y) \otimes \intg [y^{\pm 1}]$.
By composition, we see that the motivic Hirzebruch class transformation
\[ MHT_{y*} := \td_{1+y} \circ MHC_y:
  K_0 (MHM(X)) \longrightarrow H^\BM_{2*} (X) \otimes
    \rat [y^{\pm 1}, (1+y)^{-1}] \]
(see \Cref{motivic hirzebruch class transformation}) satisfies
$$
MHT_{y \ast}([M] \boxtimes [N]) = MHT_y([M]) \times MHT_y([N])
$$
for $M \in D^{b}MHM(X)$ and $N \in D^{b}MHM(Y)$.
Finally, by taking $M = IC^H_X [-a]$ and $M = IC^H_Y [-b]$, where $a$ and $b$ denote the dimensions of $X$ and $Y$, respectively, we can compute the intersection generalized Todd class (see \Cref{intersection generalized todd class}) of the product $X \times Y$ to be
\begin{align*}
IT_{y*} (X \times Y) &= MHT_{y \ast}[IC^H_{X \times Y} [-(a+b)]] \\
&= MHT_{y \ast}[IC^H_{X}[-a] \boxtimes IC^H_{Y} [-b]] \\
&= MHT_{y \ast}([IC^H_{X}[-a]] \boxtimes [IC^H_{Y} [-b]]) \\
&= MHT_{y \ast}[IC^H_{X}[-a]] \times MHT_{y \ast}[IC^H_{Y} [-b]] \\
&= IT_{y*} (X) \times IT_{y*} (Y),
\end{align*}
where we exploited that $IC^H_{X \times Y} = IC^H_X \boxtimes IC^H_Y \in MHM(X \times Y)$ by \Cref{prop multiplicativity of intersection Hodge module}.
\end{proof}

\begin{prop}\label{prop isomorphism invariance of of intersection generalized todd class}
Let $\Phi \colon X \stackrel{\cong}{\longrightarrow} Y$ be an isomorphism of pure-dimensional complex algebraic varieties.
Then, $\Phi_{\ast}IT_{y*} (X) = IT_{y*} (Y)$ in $H^\BM_{2*} (Y) \otimes \rat [y^{\pm 1}, (1+y)^{-1}]$.
\end{prop}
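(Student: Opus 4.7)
The plan is to deduce the invariance under $\Phi_\ast$ by assembling three compatibilities with proper pushforward: that of the motivic Hodge Chern class transformation $MHC_y$, that of the Baum-Fulton-MacPherson Todd class transformation $\tau_\ast$, and — the crucial input — the identification $\Phi_\ast IC^H_X = IC^H_Y$ in $MHM(Y)$. Since $\Phi$ is an isomorphism it is in particular proper, so all pushforwards involved are well-defined at the level of $K_{0}(MHM(-))$, $K_{0}^{\alg}(-)$, and $H_{\ast}^{\BM}(-)$.

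First I would show $\Phi_{\ast}IC^H_X [-n] = IC^H_Y [-n]$. Let $U \subset X$ be a Zariski-open dense smooth subset of pure dimension $n$; then $V := \Phi(U) \subset Y$ is likewise open dense and smooth, and $\Phi|_{U} \colon U \xrightarrow{\cong} V$ is an isomorphism. Since $\Phi_\ast \colon MHM(X) \to MHM(Y)$ is an equivalence of categories (its inverse being $(\Phi^{-1})_\ast$), it preserves simple objects and commutes with intermediate extensions along open inclusions. Hence $\Phi_\ast IC^H_X$ is the unique simple object in $MHM(Y)$ whose restriction to $V$ is $\rat^H_V[n]$, which is precisely $IC^H_Y$ by the characterization recalled in the excerpt. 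Shifting by $[-n]$ yields the desired equality in $K_{0}(MHM(Y))$.

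Next I would invoke naturality of $MHC_y$ under proper pushforward, which for Saito's filtered de Rham functor means $\Phi_{\ast}^{K} \circ MHC_y = MHC_y \circ \Phi_\ast$ on $K_{0}(MHM(X))$, where $\Phi_{\ast}^{K}$ denotes the $K$-theoretic proper pushforward on $K_{0}^{\alg}$. Combined with Step 1 this gives
\[ \Phi_{\ast}^{K} MHC_y [IC^H_X [-n]] = MHC_y [IC^H_Y [-n]]. \]
Now the Baum-Fulton-MacPherson Todd class transformation $\tau_\ast$ satisfies covariant functoriality under proper morphisms (one of its defining properties), so $\Phi_\ast \tau_k = \tau_k \Phi_{\ast}^{K}$ for every degree $k$; passing to the linear combination of \Cref{definition twisted todd transformation} yields $\Phi_\ast \circ \td_{1+y} = \td_{1+y} \circ \Phi_{\ast}^{K}$. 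Composing both compatibilities, one obtains $\Phi_\ast \circ MHT_{y\ast} = MHT_{y\ast} \circ \Phi_\ast$, and applying this to $[IC^H_X[-n]]$ gives
\[ \Phi_\ast IT_{y\ast}(X) = \Phi_\ast MHT_{y\ast}[IC^H_X[-n]] = MHT_{y\ast}[IC^H_Y[-n]] = IT_{y\ast}(Y). \]

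The main (and essentially only) obstacle is establishing $\Phi_\ast IC^H_X = IC^H_Y$ cleanly; the remaining steps are invocations of standard functoriality. As with the proof of \Cref{prop multiplicativity of intersection Hodge module}, the argument uses that $\operatorname{rat}$ is conservative on balanced categories, so one can alternatively verify the isomorphism at the level of perverse sheaves via $\Phi_\ast IC_X = IC_Y$ and lift through $\operatorname{rat}$; either route works since $\Phi_\ast$ is an equivalence.
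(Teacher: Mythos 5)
Your proof is correct and follows exactly the same route as the paper, which simply states that the result ``follows from $\Phi_{\ast}IC^H_{X} = IC^H_{Y}$, and by naturality of $MHT_{y \ast}$.'' You have merely unpacked this one-sentence argument into its constituent steps (identification of $\Phi_\ast IC^H_X$, proper-pushforward naturality of $MHC_y$ and $\tau_\ast$), all of which are sound.
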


\begin{proof}
This follows from $\Phi_{\ast}IC^H_{X} = IC^H_{Y}$, and by naturality of $MHT_{y \ast}$.
\end{proof}

\section{Blow-up and Transversality for Complex Manifolds}\label{Blow-up and transversality}
This section studies stratifications and transversality of the strict transform of Whitney stratified sets under blow-ups along smooth centers.
Throughout, let $M$ be a complex submanifold of a complex manifold $W$.

\begin{lemma}\label{lemma complex transversality and normal bundles}
Let $S \subset W$ be a complex submanifold that is transverse to $M \subset W$ (in the sense of $C^{\infty}$ manifolds).
Then, the intersection $S \cap M$ is a complex submanifold of $W$.
Moreover, the canonical map $N_{S\cap M}S \rightarrow (N_{M}W)|_{S\cap M}$ of complex normal bundles is an isomorphism.
\end{lemma}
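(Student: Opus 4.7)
The plan is to work locally in holomorphic coordinates and reduce both claims to linear algebra. I would fix a point $p \in S \cap M$ and choose holomorphic coordinates on a neighborhood of $p$ in $W$ in which $M$ is cut out by holomorphic functions $f_1, \dots, f_k$ whose differentials at $p$ are $\mathbb{C}$-linearly independent, where $k = \codim_{\mathbb{C}}(M \subset W)$. Since $T_pS$ and $T_pM$ are $\mathbb{C}$-subspaces of $T_pW$, and the real sum of two $\mathbb{C}$-subspaces of a $\mathbb{C}$-vector space is again a $\mathbb{C}$-subspace, the smooth transversality hypothesis $T_pS + T_pM = T_pW$ translates into the $\mathbb{C}$-linear statement that the restricted differentials $df_1|_{T_pS}, \dots, df_k|_{T_pS}$ are $\mathbb{C}$-linearly independent at $p$.

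Next, I would apply the holomorphic implicit function theorem to the restrictions $f_i|_S$, which are holomorphic functions on the complex submanifold $S$. The independence just established shows that
$$S \cap M = \{f_1|_S = \dots = f_k|_S = 0\}$$
is locally a complex submanifold of $S$ of pure complex codimension $k$, with tangent space $T_p(S \cap M) = T_pS \cap T_pM$. This proves the first assertion and simultaneously supplies the tangent space needed for the normal bundle computation.

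For the second assertion, I would note that the canonical map $N_{S \cap M}S \to (N_M W)|_{S \cap M}$ is induced by the holomorphic inclusion $TS|_{S \cap M} \hookrightarrow TW|_{S \cap M}$ composed with the quotient projection onto $(N_M W)|_{S \cap M}$, and is therefore a morphism of holomorphic vector bundles. On the fiber over $p$ it becomes the natural $\mathbb{C}$-linear map
$$T_pS / (T_pS \cap T_pM) \longrightarrow T_pW / T_pM,$$
whose image equals $(T_pS + T_pM)/T_pM$. The second isomorphism theorem guarantees that this map is always injective, and transversality identifies its image with the full quotient $T_pW/T_pM$. Hence it is a fiberwise $\mathbb{C}$-linear isomorphism, and so a holomorphic isomorphism of holomorphic vector bundles. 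No step poses a serious obstacle; the only point requiring care is the translation between smooth transversality and its $\mathbb{C}$-linear counterpart, which is automatic because all tangent spaces in play are $\mathbb{C}$-subspaces of $T_pW$.
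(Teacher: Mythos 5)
Your proof is correct and follows the same approach as the paper, which only sketches the argument: part (1) via the complex/holomorphic implicit function theorem, and part (2) via the quotient description $N_M W = TW|_M / TM$ together with the second isomorphism theorem and transversality. You have simply filled in the details that the paper leaves to the reader.
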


\begin{proof}
The proof of the first claim is similar to the real smooth version, but is based on the complex counterpart of the $C^{\infty}$ implicit function theorem (see e.g. Grifiths-Harris \cite[p. 19]{gh}).
The proof of the second claim uses the definition of the complex normal bundle $N_{M}W$ as the quotient of the restricted complex tangent bundle $TW|_{M}$ by the subbundle $TM$ (see e.g. Griffiths-Harris \cite[p. 71]{gh}).
\end{proof}

In the following, let $\pi: W' = \operatorname{Bl}_{M}W \rightarrow W$ denote the blow-up of $W$ along $M$.
Thus, $W'$ is a complex manifold, $\pi$ is a holomorphic map that is an isomorphism away from $M$, $\pi| \colon W' \setminus \pi^{-1}(M) \stackrel{\cong}{\longrightarrow} W \setminus M$, and the restriction $\pi| \colon E \rightarrow M$ of $\pi$ to the exceptional divisor $E := \pi^{-1}(M) \subset W'$ is isomorphic over $M$ to the projectivization $\mathbb{P}(N_{M}W) \rightarrow M$ of the complex normal bundle $N_{M}W \rightarrow M$ of $M$ in $W$.

The blow-up of $W$ along $M$ can be characterized uniquely up to isomorphism as follows.
Given a complex manifold $W'_{0}$ and a holomorphic map $\pi_{0} \colon W'_{0} \rightarrow W$ that is an isomorphism away from $M$, and such that the fibers of $\pi_{0}$ over points of $M$ are isomorphic to complex projective $(k-1)$-space $\mathbb{P}(\mathbb{C}^{k})$, where $k$ denotes the codimension of $M$ in $W$, there is an isomorphism $\Phi \colon W_{0}' \stackrel{\cong}{\longrightarrow} W'$ such that $\pi_{0} = \pi \circ \Phi$.
Consequently, we have the following

\begin{prop}\label{lemma locality of blow up}
For every open subset $U \subset W$, the restriction $\pi| \colon \pi^{-1}(U) \rightarrow U$ is the blow-up of $U$ along the complex submanifold $M \cap U \subset U$.
\end{prop}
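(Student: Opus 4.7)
The plan is to apply the uniqueness characterization of the blow-up recalled immediately before the statement to the candidate map $\pi|_{\pi^{-1}(U)} \colon \pi^{-1}(U) \rightarrow U$. If I can verify the three defining properties for this restricted map with center $M \cap U \subset U$, then the uniqueness of the blow-up forces an isomorphism $\pi^{-1}(U) \cong \operatorname{Bl}_{M \cap U} U$ over $U$, which is exactly what the proposition asserts.

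First, I would note that since $U$ is open in $W$, the preimage $\pi^{-1}(U)$ is open in $W'$ and therefore inherits the structure of a complex manifold, and $\pi|_{\pi^{-1}(U)}$ is holomorphic as the restriction of a holomorphic map. Restricting the isomorphism $\pi| \colon W' \setminus \pi^{-1}(M) \stackrel{\cong}{\longrightarrow} W \setminus M$ further to the preimage of $U \setminus M$ shows that $\pi|_{\pi^{-1}(U)}$ is an isomorphism away from $M \cap U$.

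Second, I would observe that $M \cap U$ is a complex submanifold of $U$ (since $M$ is one in $W$ and $U$ is open), and that the codimension of $M \cap U$ in $U$ equals the codimension $k$ of $M$ in $W$. Then, for every $x \in M \cap U$, the fiber $(\pi|_{\pi^{-1}(U)})^{-1}(x)$ coincides with the fiber $\pi^{-1}(x) \subset E$, which by the global description of the exceptional divisor is isomorphic to $\mathbb{P}(N_{M}W|_{x}) \cong \mathbb{P}(\mathbb{C}^{k})$, matching the fiber-type requirement in the uniqueness characterization.

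With these verifications in place, the uniqueness statement directly yields the desired isomorphism $\Phi \colon \pi^{-1}(U) \stackrel{\cong}{\longrightarrow} \operatorname{Bl}_{M \cap U} U$ over $U$. I do not expect a genuine obstacle here: the only subtlety is ensuring that the codimension is preserved when passing to the open subset $U$, which is automatic. The proposition is really a direct consequence of the uniqueness-up-to-isomorphism property articulated just before it.
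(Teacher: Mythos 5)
Your proof is correct and takes essentially the same approach as the paper: the paper offers no explicit argument and simply asserts the proposition as an immediate consequence ("Consequently, we have the following") of the uniqueness characterization of the blow-up stated in the preceding paragraph, which is exactly the verification you carry out in detail.
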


The inverse image $\pi^{-1}(Z)$ of a subset $Z \subset W$ is called the \emph{total transform} of $Z$ under $\pi$.
For any subset $Z \subset W$, the \emph{strict transform} $\tilde{Z}$ of $Z$ under $\pi$ is the intersection of the total transform $\pi^{-1}(Z)$ with the closure of the inverse image $\pi^{-1}(Z \setminus M)$ in $W'$.
In particular, for a closed subset $Z \subset W$, we note that $\tilde{Z}$ is just the closure of the inverse image $\pi^{-1}(Z \setminus M)$ in $W'$.
In view of \Cref{lemma locality of blow up}, it makes sense to study the behavior of the strict transform under restriction to open subsets.

\begin{lemma}\label{lemma locality of strict transform}
For any subset $Z \subset W$, the strict transform of the intersection $Z \cap U$ under the blow-up $\pi| \colon \pi^{-1}(U) \rightarrow U$ of an open subset $U \subset W$ along $M \cap U$ is given by $\tilde{Z} \cap \pi^{-1}(U)$.
\end{lemma}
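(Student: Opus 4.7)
The plan is to unwind the definition of the strict transform given just before the statement, using \Cref{lemma locality of blow up} to identify $\pi|\colon \pi^{-1}(U) \rightarrow U$ with the blow-up of $U$ along $M \cap U$. Once this identification is in place, the strict transform of $Z \cap U$ under $\pi|$ is
\[
(\pi|)^{-1}(Z \cap U) \;\cap\; \overline{(\pi|)^{-1}((Z \cap U)\setminus (M \cap U))}^{\,\pi^{-1}(U)},
\]
where the closure is taken in the ambient space $\pi^{-1}(U)$ of the blow-up.

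I would first simplify each of the two intersecting pieces. The total transform piece is immediate: $(\pi|)^{-1}(Z \cap U) = \pi^{-1}(Z) \cap \pi^{-1}(U)$. For the second piece, note that $(Z \cap U) \setminus (M \cap U) = (Z \setminus M) \cap U$, because $M \cap U$ is precisely the portion of $M$ lying inside $U$. Consequently, $(\pi|)^{-1}((Z \cap U)\setminus (M \cap U)) = \pi^{-1}(Z \setminus M) \cap \pi^{-1}(U)$.

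The main subtlety is that the closure in the definition of the strict transform of $Z \cap U$ is taken inside the open subset $\pi^{-1}(U) \subset W'$, whereas the closure in the definition of $\tilde{Z}$ is taken in $W'$. To bridge the two, I would invoke the standard point-set topological fact that for any subset $A$ of $W'$ and any open subset $V \subset W'$, one has $\overline{A \cap V}^{\,V} = \overline{A}^{\,W'} \cap V$. Applying this with $A = \pi^{-1}(Z \setminus M)$ and $V = \pi^{-1}(U)$ (open because $\pi$ is continuous and $U$ is open), the second piece becomes $\overline{\pi^{-1}(Z \setminus M)}^{\,W'} \cap \pi^{-1}(U)$.

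Finally, intersecting everything and reorganizing the factors yields
\[
\pi^{-1}(Z) \cap \overline{\pi^{-1}(Z \setminus M)}^{\,W'} \cap \pi^{-1}(U) \;=\; \tilde{Z} \cap \pi^{-1}(U),
\]
which is the claim. There is no real obstacle here; the only point one must handle carefully is the interchange between closure-in-$V$ and closure-in-$W'$-intersected-with-$V$, but this is a routine consequence of the subspace topology.
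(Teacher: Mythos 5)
Your proof is correct and takes essentially the same route as the paper: unwind the definition of strict transform for $\pi|$, note that the total transform and the preimage pieces factor through $\pi^{-1}(U)$, and then use the point-set fact that for an open $V\subset W'$ and any $A\subset W'$ one has $\overline{A\cap V}^{\,V}=\overline{A}^{\,W'}\cap V$. The paper's proof is the same argument written out in one paragraph.
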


\begin{proof}
The strict transform of the intersection $Z \cap U$ under the blow-up $\pi| \colon \pi^{-1}(U) \rightarrow U$ of an open subset $U \subset W$ along $M \cap U$ is by definition the intersection of the total transform $\pi^{-1}(Z \cap U) = \pi^{-1}(Z) \cap \pi^{-1}(U)$ with the closure of the inverse image $\pi^{-1}((Z \cap U) \setminus (M \cap U)) = \pi^{-1}(Z \setminus M) \cap \pi^{-1}(U)$ in $\pi^{-1}(U)$.
Since $\pi^{-1}(U) \subset W'$ is an open subset, the closure of $\pi^{-1}(Z \setminus M) \cap \pi^{-1}(U)$ in $\pi^{-1}(U)$ equals the intersection of $\pi^{-1}(U)$ with the closure of $\pi^{-1}(Z \setminus M)$ in $W'$.
All in all, we obtain the intersection of $\pi^{-1}(Z) \cap \pi^{-1}(U)$ with the closure of $\pi^{-1}(Z \setminus M)$ in $W'$, which is just $\tilde{Z} \cap \pi^{-1}(U)$.
\end{proof}

\begin{prop}\label{proposition strict and total transform agree for strata}
If $S \subset W$ is a complex submanifold that is transverse to $M \subset W$, then the strict transform and the total transform of $S$ under $\pi$ coincide, $\tilde{S} = \pi^{-1}(S)$.
\end{prop}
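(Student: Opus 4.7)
The plan is to establish the nontrivial inclusion $\pi^{-1}(S) \subset \tilde{S}$ by a direct local coordinate computation. The reverse inclusion holds by definition of the strict transform, and since $\pi$ restricts to an isomorphism over $W \setminus M$, the two sets already agree on $\pi^{-1}(W \setminus M)$; so it suffices to prove the inclusion over the exceptional divisor $E$. By \Cref{lemma locality of blow up} and \Cref{lemma locality of strict transform}, the problem is local on $W$, and away from $S \cap M$ the fiber $\pi^{-1}(S) \cap E$ is empty. So I would fix a point $p \in S \cap M$ and work in a small open neighborhood $U$ of $p$ in $W$.

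The next step is to use transversality to straighten both submanifolds simultaneously. By the holomorphic implicit function theorem together with the fact that $T_p S + T_p M = T_p W$, one can choose holomorphic coordinates $(z_1,\ldots,z_n)$ on $U$ such that $M \cap U = \{z_1 = \cdots = z_k = 0\}$, where $k = \codim_W M$, and $S \cap U = \{z_{s+1} = \cdots = z_n = 0\}$, where $s = \dim S$ (with $k \leq s$ forced by transversality). I expect this simultaneous normal form to be the main, though minor, technical point: once it is in place, the rest is direct verification. In these coordinates, $\pi^{-1}(U)$ has the standard blow-up model
$$
\pi^{-1}(U) = \{((z_1,\ldots,z_n),[y_1:\cdots:y_k]) \in U \times \mathbb{P}^{k-1} : z_i y_j = z_j y_i,\ 1 \leq i, j \leq k\},
$$
with $\pi$ the projection onto the first factor and $E \cap \pi^{-1}(U) = \{z_1 = \cdots = z_k = 0\} \times \mathbb{P}^{k-1}$.

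Finally, I would unwind both sets in this local model. The total transform $\pi^{-1}(S \cap U)$ consists of the points above satisfying $z_{s+1} = \cdots = z_n = 0$. Over $(S \cap U) \setminus M$, the projective coordinate is forced to equal $[z_1 : \cdots : z_k]$, so $\pi^{-1}((S \cap U) \setminus M)$ is parametrized by such points; any point
$$
\bigl((0,\ldots,0,z_{k+1},\ldots,z_s,0,\ldots,0),[y_1:\cdots:y_k]\bigr) \in \pi^{-1}(S \cap U) \cap E
$$
is then the limit as $t \to 0$ of the holomorphic curve
$$
t \longmapsto \bigl((ty_1,\ldots,ty_k,z_{k+1},\ldots,z_s,0,\ldots,0),[y_1:\cdots:y_k]\bigr),
$$
which lies in $\pi^{-1}((S \cap U) \setminus M)$ for $t \neq 0$ because then $(ty_1,\ldots,ty_k)$ is a nonzero scalar multiple of $(y_1,\ldots,y_k)$. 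Hence $\pi^{-1}(S \cap U) \cap E \subset \tilde{S}$, completing the argument. A conceptual alternative, which I might mention in passing, is that $\pi|_{\tilde{S}} \colon \tilde{S} \to S$ realizes the blow-up of $S$ along $S \cap M$, so its fiber over $q \in S \cap M$ is $\mathbb{P}((N_{S \cap M}S)_q)$, which coincides with the fiber $\mathbb{P}((N_M W)_q)$ of $\pi^{-1}(S)$ via the canonical isomorphism of \Cref{lemma complex transversality and normal bundles} provided exactly by transversality.
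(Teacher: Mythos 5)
Your proof is correct, and it takes a genuinely different route from the paper's. You work in local coordinates where transversality yields the simultaneous normal form $M = \{z_1 = \cdots = z_k = 0\}$, $S = \{z_{s+1} = \cdots = z_n = 0\}$, and then exhibit each point of $\pi^{-1}(S)\cap E$ as the $t\to 0$ limit of an explicit holomorphic curve in $\pi^{-1}(S\setminus M)$; this is a self-contained computation whose only nontrivial input is that transverse complex submanifolds can be simultaneously straightened (which holds: straighten $M$ first, then shear $S$ into a coordinate plane by a change of coordinates that fixes $u$, hence fixes $M=\{u=0\}$). The paper instead reduces to closed $S$ via \Cref{lemma locality of blow up} and \Cref{lemma locality of strict transform}, cites Griffiths--Harris \cite[property 5, pp.~604--605]{gh} for the identification of $\tilde S\cap E$ with the projectivized image of the tangent spaces $T_xS$, $x\in S\cap M$, inside $N_MW$, and then uses \Cref{lemma complex transversality and normal bundles} to conclude via transversality that this image is all of $(N_MW)|_{S\cap M}$. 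The paper's proof is shorter but outsources the main geometric step to the cited fact, while yours re-derives it in coordinates; the ``conceptual alternative'' you sketch at the end (that $\pi|_{\tilde S}$ is the blow-up of $S$ along $S\cap M$, with fibers $\mathbb{P}((N_{S\cap M}S)_q)\cong\mathbb{P}((N_MW)_q)$) is essentially the paper's approach restated.
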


\begin{proof}
Without loss of generality, we may assume that $S$ is a closed subset of $W$.
(In fact, $S$ is a closed subset of an open tubular neighborhood $U \subset W$ of $S$, and the restriction $\pi| \colon \pi^{-1}(U) \rightarrow U$ is the blow-up of $U$ along the complex submanifold $M \cap U \subset U$ by \Cref{lemma locality of blow up}.
Hence, the strict transform of $S \cap U = S$ under $\pi| \colon \pi^{-1}(U) \rightarrow U$ coincides with the total transform $\pi^{-1}(S)$.
On the other hand, this strict transform is given by $\tilde{S} \cap \pi^{-1}(U) = \tilde{S}$ according to \Cref{lemma locality of strict transform}.)
Then, according to Griffiths-Harris \cite[property 5, pp. 604-605]{gh}, the intersection $\tilde{S} \cap E$ corresponds under the isomorphism $\Psi \colon E \cong \mathbb{P}(N_{M}W)$ of bundles over $M$ to the image in $N_{M}W$ of the complex tangent spaces $T_{x}S$ to the points $x \in S \cap M$.
This image is $(N_MW)|_{S\cap M}$ because the right-hand vertical arrow in the commutative diagram
\[ \xymatrix{
TS|_{S \cap M} \ar[r]^{\operatorname{quot}} \ar[d]_{\operatorname{incl}} & N_{S \cap M}S \ar[d]^{\cong} \\
TW|_{S \cap M} \ar[r]_{\operatorname{quot}} & (N_{M}W)|_{S \cap M}
} \]
is surjective by \Cref{lemma complex transversality and normal bundles}.
Consequently,
$$
\tilde{S} \cap E = \Psi^{-1}\mathbb{P}((N_MW)|_{S\cap M}) = \pi^{-1}(S \cap M).
$$
All in all, we obtain
$$
\tilde{S} = (\tilde{S} \cap E) \cup (\tilde{S} \cap \pi^{-1}(S \setminus M)) = \pi^{-1}(S \cap M) \cup \pi^{-1}(S \setminus M) = \pi^{-1}(S).
$$
\end{proof}

From now on, we assume familiarity with the notion of Whitney stratifications; for details, see e.g. \cite[Section 1.2, p. 37]{gmsmt}.

\begin{cor}\label{corollary strict and total transform agree}
If $X \subset W$ is a Whitney stratified subspace whose strata are complex submanifolds of $W$ that are transverse to $M \subset W$, then the strict transform and the total transform of $X$ coincide, $\tilde{X} = \pi^{-1}(X)$.
\end{cor}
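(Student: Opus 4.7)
The plan is to apply \Cref{proposition strict and total transform agree for strata} stratum by stratum and then combine the results. Write the Whitney stratification of $X$ as a disjoint union $X = \bigsqcup_\alpha S_\alpha$ of locally closed complex submanifolds of $W$, each transverse to $M$ by hypothesis. Although a stratum $S_\alpha$ need not be closed in $W$, it is closed in the open set $U_\alpha := W \setminus (\overline{S}_\alpha \setminus S_\alpha)$, where $\overline{S}_\alpha \setminus S_\alpha$ is closed because $S_\alpha$ is locally closed. Restricting the blow-up to $\pi^{-1}(U_\alpha)$ via \Cref{lemma locality of blow up} and using \Cref{lemma locality of strict transform}, the argument of \Cref{proposition strict and total transform agree for strata} applies to the closed complex submanifold $S_\alpha \subset U_\alpha$ transverse to $M \cap U_\alpha$, giving $\tilde{S}_\alpha = \pi^{-1}(S_\alpha)$.

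Unfolding the definition $\tilde{X} = \pi^{-1}(X) \cap \overline{\pi^{-1}(X \setminus M)}$, the inclusion $\tilde{X} \subset \pi^{-1}(X)$ is automatic. For the reverse inclusion, I would observe that for every stratum
\[
\pi^{-1}(S_\alpha) \;=\; \tilde{S}_\alpha \;\subset\; \overline{\pi^{-1}(S_\alpha \setminus M)} \;\subset\; \overline{\pi^{-1}(X \setminus M)},
\]
where the first inclusion comes from the definition of $\tilde{S}_\alpha$ and the second from monotonicity of closure applied to $S_\alpha \setminus M \subset X \setminus M$. Taking the union over all strata yields $\pi^{-1}(X) = \bigcup_\alpha \pi^{-1}(S_\alpha) \subset \overline{\pi^{-1}(X \setminus M)}$, and intersecting with $\pi^{-1}(X)$ gives $\pi^{-1}(X) \subset \tilde{X}$, completing the argument.

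The only subtlety I anticipate is the application of \Cref{proposition strict and total transform agree for strata} to strata that are only locally closed rather than closed in $W$, but this is precisely handled by the localization principle embedded in the parenthetical of that proposition's proof, together with \Cref{lemma locality of strict transform}. It is worth noting that the full strength of Whitney regularity is not invoked in this argument: only the transversality of each stratum to $M$ and the set-theoretic compatibility of total transform with disjoint unions are used, which makes the proof essentially a bookkeeping exercise once the single-stratum case has been established.
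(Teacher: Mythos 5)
Your proof is correct and follows essentially the same route as the paper: reduce to the single-stratum case via \Cref{proposition strict and total transform agree for strata}, observe that $\tilde{S}_\alpha \subset \tilde{X}$ (which you unfold explicitly through the chain $\pi^{-1}(S_\alpha) = \tilde{S}_\alpha \subset \overline{\pi^{-1}(S_\alpha \setminus M)} \subset \overline{\pi^{-1}(X\setminus M)}$), and take the union over strata. The only difference is cosmetic: your preliminary restriction to the open sets $U_\alpha$ duplicates work already done in the parenthetical reduction inside the proof of \Cref{proposition strict and total transform agree for strata}, whose statement already applies to arbitrary (not necessarily closed) complex submanifolds, so that step can be omitted.
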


\begin{proof}
Since every stratum $S$ of $X$ satisfies $\pi^{-1}(S) = \tilde{S}$ by \Cref{proposition strict and total transform agree for strata}, and $\tilde{S} \subset \tilde{X}$ holds by definition of the strict transform, we have
$$
\pi^{-1}(X) = \bigcup_{S} \pi^{-1}(S) = \bigcup_{S}\tilde{S} \subset \tilde{X}.
$$
Conversely, we have $\tilde{X} \subset \pi^{-1}(X)$ by definition of the strict transform.
\end{proof}

We proceed to the main result of this section.

\begin{thm}\label{main result blowup}
Let $X \subset W$ be a Whitney stratified subspace whose strata are complex submanifolds of $W$ that are transverse to $M \subset W$.
Then, the strict transform $\tilde{X} \subset W'$ admits a Whitney stratification whose strata are transverse to the exceptional divisor $E \subset W'$ of $\pi$.
\end{thm}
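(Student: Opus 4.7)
The plan is to show that the family $\{\tilde S := \pi^{-1}(S)\}$, indexed by the strata $S$ of the given Whitney stratification of $X$, provides the desired stratification of $\tilde X = \pi^{-1}(X)$ (the latter equality holding by \Cref{corollary strict and total transform agree}). Since $\pi$ is continuous and proper and the strata of $X$ are locally finite and locally closed in $W$, the preimages $\tilde S$ inherit these properties and partition $\tilde X$. The verification then reduces to three tasks: smoothness of each $\tilde S$ together with its transversality to $E$; the frontier condition; and the Whitney conditions (a) and (b).

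For each stratum $S$, if $S \cap M = \emptyset$ then $\pi$ restricts to a biholomorphism $\tilde S \xrightarrow{\cong} S$ disjoint from $E$, and the conclusions are immediate. If $S \cap M \neq \emptyset$, then \Cref{proposition strict and total transform agree for strata} yields $\tilde S = \pi^{-1}(S)$. In the standard blow-up chart on $W'$, with coordinates $(z_1, u_2, \dots, u_k, z_{k+1}, \dots, z_n)$, $M = \{z_1 = \dots = z_k = 0\}$, $E = \{z_1 = 0\}$, and $\pi(z_1, u_2, \dots, u_k, z_{k+1}, \dots, z_n) = (z_1, z_1 u_2, \dots, z_1 u_k, z_{k+1}, \dots, z_n)$, I would use the transversality of $S$ to $M$ to choose compatible local defining functions for $S$ and check that $\tilde S$ is a smooth complex submanifold of $W'$. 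The intersection $\tilde S \cap E = \pi^{-1}(S \cap M)$ has codimension one in $\tilde S$ by a dimension count, and a direct tangent-space calculation then confirms the transversality $T_y \tilde S + T_y E = T_y W'$ at every $y \in \tilde S \cap E$.

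Next, I would verify the frontier condition $S_1 \subset \overline{S_2} \Rightarrow \tilde S_1 \subset \overline{\tilde S_2}$. Away from $E$ this follows from the biholomorphism $\pi|_{W' \setminus E}$, so the essential content is at a point $y \in \tilde S_1 \cap E$ with $x = \pi(y) \in S_1 \cap M$ corresponding to a projective direction $[v] \in \mathbb{P}(N_x M|_W)$. One must exhibit a sequence $x_n \in S_2 \setminus M$ with $x_n \to x$ whose approach directions in $N_x M|_W$ converge to $[v]$; lifting via the biholomorphism $\pi|_{W' \setminus E}$ then produces $y_n = \pi^{-1}(x_n) \in \tilde S_2$ with $y_n \to y$. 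The key input is that the tangent cone $C_x \overline{S_2}$ contains $T_x S_1$ (since $S_1 \subset \overline{S_2}$ passes smoothly through $x$), while the transversality of $S_1$ to $M$ makes $T_x S_1$ surject onto $N_x M|_W$; this forces the induced map $C_x \overline{S_2} \to N_x M|_W$ to be surjective. A density argument, using that $S_2$ is open dense in $\overline{S_2}$ and that $S_2 \setminus M$ is open dense in $S_2$, then supplies the required sequence.

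The most delicate step, and the main obstacle, is the verification of the Whitney conditions (a) and (b) for each incidence $\tilde S_1 \subset \overline{\tilde S_2}$. Away from $E$, they lift trivially from the Whitney conditions of $X$ via $\pi$. At a point $y \in \tilde S_1 \cap E$ with $x = \pi(y) \in S_1 \cap M$, the difficulty is that $d\pi_y$ degenerates along the vertical directions of the projective-bundle structure $E \to M$, while the Whitney data for $X$ live in $T_x W$. For sequences $y_n \in \tilde S_2 \setminus E$ one has $T_{y_n} \tilde S_2 = (d\pi_{y_n})^{-1}(T_{\pi(y_n)} S_2)$; combining this with the Whitney limits of $T_{\pi(y_n)} S_2$ for $\pi(y_n) \to x$ and the controlled degeneration of $d\pi_{y_n}$, one extracts a limit in $T_y W'$ that contains $T_y \tilde S_1$, yielding condition (a). Condition (b) is established similarly, using that the secants $y_n - y'_n$ in $W'$ project under $d\pi$ to the secants $\pi(y_n) - \pi(y'_n)$ in $W$ modulo controlled lower-order corrections and invoking Whitney (b) for $(S_2, S_1)$ at $x$. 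This argument embodies the general principle that blow-ups along smooth centers transverse to a Whitney stratified set preserve the Whitney regularity of the strict transform.
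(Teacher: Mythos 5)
Your choice of stratification of $\tilde X$ by the preimages $\pi^{-1}(S)$ is the same as the paper's (and the paper likewise first reduces to the total transform via its Proposition~\ref{proposition strict and total transform agree for strata} and Corollary~\ref{corollary strict and total transform agree}), so the overall plan is on target. Where you diverge is in the method of verifying the Whitney conditions, and that is precisely where your argument has a real gap. The paper sidesteps any hands-on limit computations: it localizes so that $\pi$ is the standard blow-up of a polydisc along a coordinate plane, realizes $W'$ as a closed submanifold of the product $W\times\mathbb{P}(\mathbb{C}^k)$, observes that $\pi^{-1}(Z)=W'\cap(Z\times\mathbb{P}(\mathbb{C}^k))$, checks that each $S\times\mathbb{P}(\mathbb{C}^k)$ is transverse to both $W'$ and $E$ inside $W\times\mathbb{P}(\mathbb{C}^k)$, and then invokes the standard theorem (Cheniot) that intersecting a Whitney stratified set with a transverse smooth submanifold again yields a Whitney stratified set, together with the easy nested-submanifold refinement in Lemma~\ref{lemma transversality of intersections}. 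All the Grassmannian-limit work is thereby absorbed into a known result.

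Your proposal attempts a direct verification, and you yourself identify the Whitney (a) and (b) check at points of $E$ as the main obstacle, but what you offer there is a sketch rather than an argument. For condition (a), the phrase ``controlled degeneration of $d\pi_{y_n}$'' glosses over the fact that $d\pi_y$ kills the vertical directions of the fibration $E\to M$; the limits of $T_{y_n}\tilde S_2$ in the Grassmannian of $T W'$ contain nontrivial information in exactly those vertical directions, and that information is not recoverable from the Whitney limits of $T_{\pi(y_n)}S_2$ in $TW$ alone. For condition (b), ``secants $y_n-y'_n$ in $W'$ project under $d\pi$ to the secants $\pi(y_n)-\pi(y'_n)$ in $W$ modulo controlled lower-order corrections'' is not justified: $\pi^{-1}$ is far from linear near $E$, and secant directions in $W'$ between points approaching $E$ are dominated by the same vertical directions that $d\pi$ annihilates, so the relation between the two families of secants is genuinely subtle. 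These are not bookkeeping issues; they are the substance of the theorem, and as written the proposal does not resolve them. (Your frontier-condition argument via tangent cones and a density step is similarly only indicative.) I would recommend replacing the direct Whitney check with the paper's product trick, which converts the whole problem into a single application of the transverse-intersection theorem for Whitney stratifications.
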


In view of \Cref{proposition strict and total transform agree for strata} and \Cref{corollary strict and total transform agree}, \Cref{main result blowup} follows from the following

\begin{thm}\label{main result blowup total transform version}
Let $X \subset W$ be a Whitney stratified subspace whose strata are transverse to $M \subset W$.
Then, the total transforms $\pi^{-1}(S)$ of the strata $S$ of $X$ form a Whitney stratification of the total transform $\pi^{-1}(X) \subset W'$.
Furthermore, every stratum of the total transform $\pi^{-1}(X)$ is transverse to $E = \pi^{-1}(M) \subset W'$.
\end{thm}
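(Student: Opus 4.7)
The plan is to take the strata of $\pi^{-1}(X)$ to be the sets $\pi^{-1}(S)$ as $S$ ranges over the strata of $X$; these are pairwise disjoint and cover $\pi^{-1}(X)$. To verify that each $\pi^{-1}(S)$ is a complex submanifold of $W'$ transverse to $E$, I work in local holomorphic coordinates $(z_1, \ldots, z_n)$ on $W$ adapted to $M$, so that $M = \{z_1 = \cdots = z_k = 0\}$ with $k = \codim_W M$, together with the standard blow-up charts $U_j$ ($1 \leq j \leq k$) in which $\pi$ is given by $(w, z) \mapsto (w_1 w_j, \ldots, w_j, \ldots, w_k w_j, z_{k+1}, \ldots, z_n)$ (the $j$-th entry equal to $w_j$), so that $E \cap U_j = \{w_j = 0\}$. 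Under the hypothesis, no stratum of $X$ is contained in $M$: if $S \subset M$ and $S$ is transverse to $M$, then $T_x S \subset T_x M$ forces $T_x M = T_x W$, hence $M = W$. Strata avoiding $M$ are trivial since $\pi$ is an isomorphism on $W' \setminus E$. For a stratum $S$ meeting $M$, \Cref{proposition strict and total transform agree for strata} applied stratum-wise gives $\pi^{-1}(S) = \tilde{S}$, and local parametrization of $S$ as a graph $z_i = \phi_i(z_1, \ldots, z_d)$ for $i > d := \dim_{\mathbb{C}} S$, with $z_1, \ldots, z_k$ among the parameters (by transversality), yields the explicit defining equations $z_i = \phi_i(w_1 w_j, \ldots, w_j, \ldots, w_k w_j, z_{k+1}, \ldots, z_d)$ for $\pi^{-1}(S) \cap U_j$. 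Their Jacobian has maximal rank at every point of $U_j$ (the leading $dz_i$ terms for distinct $i$ are linearly independent), and remains of maximal rank together with $dw_j$ along $E \cap U_j$. This shows $\pi^{-1}(S)$ is a complex submanifold of $W'$ of dimension $d$ that is transverse to $E$.

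Next I verify the structural axioms of a Whitney stratification. Local finiteness is inherited from $X$ via the proper map $\pi$. For the frontier condition with $S_1 \subset \overline{S_2}$, a point $y \in \pi^{-1}(S_1) \setminus E$ is approached by $y_n = \pi^{-1}(x_n)$ for $x_n \in S_2 \setminus M$ converging to $\pi(y)$, which exists because $S_2 \setminus M$ is open dense in $S_2$. A point $y \in \pi^{-1}(S_1) \cap E$ lying over $\pi(y) \in S_1 \cap M$ is handled by invoking the induced Whitney stratification of $X \cap M$ on $M$ (which inherits the frontier condition from $X$ by the transverse intersection principle) to get $\pi(y) \in \overline{S_2 \cap M}$; one then lifts a sequence $x_n \in S_2 \cap M$ with $x_n \to \pi(y)$ to points $y_n \in \pi^{-1}(x_n) \cap \pi^{-1}(S_2) \subset E$ chosen inside the projective bundle so that $y_n \to y$. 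At points $y \in \pi^{-1}(S_1) \setminus E$, where $\pi$ is a local biholomorphism, Whitney's conditions (a) and (b) for $(\pi^{-1}(S_1), \pi^{-1}(S_2))$ at $y$ follow at once from the corresponding conditions at $\pi(y)$ for $(S_1, S_2)$ in $W$.

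The main obstacle is verifying Whitney (a) and (b) at a point $y \in E \cap \pi^{-1}(S_1)$, where $d\pi_y$ has nontrivial kernel along the fiber direction of $\pi|_E \colon E \to M$, so tangent spaces cannot simply be transported through $\pi$. I would handle this by direct tangent-space and secant computation in a blow-up chart $U_j$ containing $y$. The key observation is that for a sequence $y_n \in \pi^{-1}(S_2) \setminus E$ converging to $y$, the projected sequence $\pi(y_n) \in S_2$ converges to $\pi(y) \in S_1 \cap M$, and the normalized normal component of $\pi(y_n) - \pi(y)$ in $N_{\pi(y)} M$ converges precisely to the direction $[v] \in \mathbb{P}(N_{\pi(y)} M)$ corresponding to $y$ under the identification $E \cap \pi^{-1}(\pi(y)) \cong \mathbb{P}(N_{\pi(y)} M)$. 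Combining this with Whitney (a) and (b) for $(S_1, S_2)$ in $W$ applied to $\pi(y_n) \to \pi(y)$, and using transversality of $S_2$ to $M$ to control the vanishing order in $w_j$, one derives the inclusion $T_y \pi^{-1}(S_1) \subset T'$ and the analogous secant condition in $W'$; transversality of $S_2$ to $M$ is essential precisely to prevent the blow-up from introducing spurious fiber-direction contributions in the limits. Approach sequences $y_n \in \pi^{-1}(S_2) \cap E$ are handled separately by invoking Whitney conditions for the induced stratification of $X \cap M$ on $M$ and pulling back via the submersion $\pi|_E \colon E \to M$, which preserves Whitney conditions.
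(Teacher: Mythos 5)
Your proposal takes a genuinely different route from the paper: you attempt a direct, chart-by-chart verification of the Whitney conditions for the strata $\pi^{-1}(S)$ in $W'$, whereas the paper avoids any pointwise check along $E$ by embedding the blow-up $W'$ into $W \times \mathbb{P}(\mathbb{C}^k)$ in the standard local model, observing that $\pi^{-1}(Z) = W' \cap (Z \times \mathbb{P}(\mathbb{C}^k))$, and showing (\Cref{lemma transversality in blow-up of open disk}) that the strata $S \times \mathbb{P}(\mathbb{C}^k)$ of the \emph{product} Whitney stratification are transverse to both $E$ and $W'$ inside $W \times \mathbb{P}(\mathbb{C}^k)$. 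Both conclusions then drop out simultaneously from the relative transverse-intersection lemma (\Cref{lemma transversality of intersections}), which ultimately rests on the classical fact (Cheniot) that transversely intersecting a Whitney stratified set with a submanifold produces a Whitney stratified set. Your first two paragraphs (that each $\pi^{-1}(S)$ is a complex submanifold transverse to $E$, and that frontier, local finiteness, and Whitney (a), (b) hold away from $E$) are sound, and the use of \Cref{proposition strict and total transform agree for strata} is apt.

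The gap is in your third paragraph, the Whitney conditions at $y \in E$, which you yourself flag as the main obstacle. You state that combining Whitney (a) and (b) downstairs for $(S_1, S_2)$ with the convergence of normalized normal components, and using transversality of $S_2$ to $M$ to control the vanishing order in $w_j$, lets one derive $T_y \pi^{-1}(S_1) \subset T'$ and the secant condition, but this derivation is never carried out, and it is exactly the hard content of the theorem: the limiting plane $T' = \lim T_{y_n}\pi^{-1}(S_2)$ contains directions tangent to the exceptional fiber that are annihilated by $d\pi$, so they have no analogue in $\lim T_{\pi(y_n)} S_2$, and one must make explicit estimates on how the secant directions and the vertical components of the tangent planes behave as $y_n \to y$. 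You only gesture at these estimates. In addition, your separate treatment of approach sequences $y_n \in \pi^{-1}(S_2) \cap E$ by pulling back the induced Whitney stratification of $X \cap M$ through the submersion $\pi|_E \colon E \to M$ does not give what is needed: since $\pi^{-1}(S_2)$ is transverse to $E$, the planes $T_{y_n}\pi^{-1}(S_2)$ are \emph{not} contained in $T_{y_n}E$, so Whitney conditions internal to $E$ say nothing about the limits of these full tangent planes in $W'$. The paper's product-embedding trick exists precisely to finesse both of these difficulties in one stroke, and without filling in the limit estimates your argument is incomplete.
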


\begin{proof}
The claims are local in the sense that it suffices to find an open cover of $W$ such that for every subset $U \subset W$ of this cover, the inverse images $\pi^{-1}(S \cap U)$ of the strata $S \cap U$ of $X \cap U$ form a Whitney stratification of the inverse image $\pi^{-1}(X \cap U) \subset \pi^{-1}(U)$, and every $\pi^{-1}(S \cap U)$ is transverse to $\pi^{-1}(M \cap U) \subset \pi^{-1}(U)$.
Since $\pi \colon W' \rightarrow W$ is locally isomorphic to the blow-up of an $n$-dimensional open disc along a coordinate plane (see property 3 in \cite[p. 604]{gh}), we may therefore assume without loss of generality that $\pi$ is globally of this form.
Explicitly, the blow-up $\pi: W' \rightarrow W$ of an $n$-dimensional open disc $W \subset \mathbb{C}^{n}$ with complex coordinates $z = (z_{1}, \dots, z_{n})$ along the coordinate plane $M = W \cap \{z_{i} = 0; \; i = 1, \dots, k\}$ of codimension $k$ is given by restricting the projection $W \times \mathbb{P}(\mathbb{C}^{k}) \rightarrow W$ to the first factor to
$$
W' = \{(z, w = [w_{1} : \dots : w_{k}]) \in W \times \mathbb{P}(\mathbb{C}^{k}); \; z_i w_j=z_j w_i \text{ for all } 1 \leq i, j \leq k\},
$$
and we have $E = \pi^{-1}(M) = M \times \mathbb{P}(\mathbb{C}^{k})$.
In this situation, we have the following

\begin{lemma}\label{lemma transversality in blow-up of open disk}
If $S \subset W$ is a smooth submanifold that is transverse to $M \subset W$, then $S \times \mathbb{P}(\mathbb{C}^{k})$ is transverse to $E = M \times \mathbb{P}(\mathbb{C}^{k}) \subset W \times \mathbb{P}(\mathbb{C}^{k})$ and to $W' \subset W \times \mathbb{P}(\mathbb{C}^{k})$.
\end{lemma}

\begin{proof}
Since $S$ is transverse to $M \subset W$, it follows that $S \times \mathbb{P}(\mathbb{C}^{k})$ is transverse to $E = M \times \mathbb{P}(\mathbb{C}^{k}) \subset W \times \mathbb{P}(\mathbb{C}^{k})$, which proves the first claim.
To show the second claim, it remains to prove transversality of $S$ and $W'$ at points in $(S \times \mathbb{P}(\mathbb{C}^{k})) \cap (W' \setminus E)$.
For this purpose, fix a point
$$
x = (y, z) \in (S \times \mathbb{P}(\mathbb{C}^{k})) \cap (W' \setminus E) \; (\subset (S \setminus M) \times \mathbb{P}(\mathbb{C}^{k})),
$$
and let $(p, q) \in T_{x}(W \times \mathbb{P}(\mathbb{C}^{k})) = T_{y}(W \setminus M) \times T_{z}\mathbb{P}(\mathbb{C}^{k})$ be an arbitrary tangent vector.
Since the differential
$$
d_{x}\pi: T_{x}(W'\setminus E) \to T_{y}(W\setminus M)
$$
is surjective, the vector $p \in T_{y}(W\setminus M)$ is the image under $d_{x}\pi$ of a vector in $T_{x}(W'\setminus E)$ of the form $(p, q') \in T_{x}(W'\setminus E) \subset T_{y}(W \setminus M) \times T_{z}\mathbb{P}(\mathbb{C}^{k})$.
Writing $(p, q) = (p, q') + (0, q-q')$ with $(p, q') \in T_{x}(W'\setminus E)$ and $(0, q-q') \in T_{x}(S \times \mathbb{P}(\mathbb{C}^{k})) = T_{y}S \times T_{z}\mathbb{P}(\mathbb{C}^{k})$, we conclude that $S \times \mathbb{P}(\mathbb{C}^{k})$ and $W'$ are transverse at $x$, which proves the second claim.
\end{proof}

Using \Cref{lemma transversality in blow-up of open disk}, we can now complete the proof of \Cref{main result blowup total transform version}.
For this purpose, let $X \subset W$ be a Whitney stratified subspace whose strata $S$ are transverse to $M \subset W$.
We have to show that the total transforms $\pi^{-1}(S)$ form a Whitney stratification of the total transform $\pi^{-1}(X) \subset W'$, and that every stratum $\pi^{-1}(S)$ of the total transform $\pi^{-1}(X)$ is transverse to $E \subset W'$.
Since $\pi^{-1}(Z) = W' \cap (Z \times \mathbb{P}(\mathbb{C}^{k}))$ for all subsets $Z \subset W$, we obtain the claims by applying \Cref{lemma transversality of intersections} below to the smooth submanifolds $E \subset W'$ of the smooth manifold $W \times \mathbb{P}(\mathbb{C}^{k})$, and to the Whitney stratified subset $X \times \mathbb{P}(\mathbb{C}^{k}) \subset W \times \mathbb{P}(\mathbb{C}^{k})$, whose strata $S \times \mathbb{P}(\mathbb{C}^{k})$ are all transverse to $E \subset W \times \mathbb{P}(\mathbb{C}^{k})$ and to $W' \subset W \times \mathbb{P}(\mathbb{C}^{k})$ by \Cref{lemma transversality in blow-up of open disk}.

\begin{lemma}\label{lemma transversality of intersections}
Let $N \subset M$ be smooth submanifolds of a smooth manifold $W$.
Let $X \subset W$ be a Whitney stratified subset whose strata $S$ are transverse to $N \subset W$ and to $M \subset W$.
Then, the intersection $Y := M \cap X \subset M$ is a Whitney stratified subset whose strata $M \cap S$ are transverse to $N \subset M$.
\end{lemma}

\begin{proof}
By the theory of Whitney stratifications (see e.g. Lemma 2.2.2 in \cite{cheniot}), it is well-known that the intersection $Y := M \cap X \subset M$ is a Whitney stratified subset with strata $S \cap M$, where $S$ runs through the strata of $X$.
It remains to show that the strata of $Y$ are transverse to $N \subset M$.
For this purpose, we fix $x \in N \cap Y$, and have to show that $T_{x}M \subset T_{x}N + T_{x}S'$, where $S'$ denotes the stratum of $Y = M \cap X$ that contains $x$.
Now, $N \pitchfork_{P} X$ and $x \in N \cap Y \subset N \cap X$ imply that $T_{x}P \subset T_{x}N + T_{x}S$, where $S$ denotes the stratum of $X$ that contains $x$.
Therefore, using $M \subset P$, we obtain $T_{x}M \subset T_{x}N + T_{x}S$.
Thus, given $\mu \in T_{x}M$, we find $\nu \in T_{x}N$ and $\xi \in T_{x}S$ such that $\mu = \nu + \xi$.
Using $T_{x}N \subset T_{x}M$, we conclude that
\[
\xi = \mu - \nu \in T_{x}M \cap T_{x}S = T_{x}(M \cap S) = T_{x}S'.
\]
All in all, we have $\mu = \nu + \xi \in T_{x}N + T_{x}S'$.
As $\mu \in T_{x}M$ was arbitrary, the claim follows.
\end{proof}

This completes the proof of \Cref{main result blowup total transform version}.
\end{proof}

\begin{remark}
Our proof of \Cref{main result blowup total transform version} is inspired by an argument of Cheniot \cite[pp. 145-146]{cheniot}.
There, one has $M=A$ of codimension $k = 2$, and thus $P(\mathbb{C}^k) =\mathbb{P}^1(\mathbb{C})$, where $\phi:=\pi \colon Z:=W' \to W$ denotes the blow-up.
However, all we really need is that $M$ is transverse to the Whitney stratification of $X$ in $W$.
\end{remark}

\section{Algebraic Gysin Restriction of $IT_{1, \ast}$ in a Transverse Setup}\label{review transverse setup}
In \Cref{Topological normally nonsingular embeddings}, we review from \cite{banagllgysin} a topological notion of normally nonsingular embedding that respects a stratification of the ambient space.
Then, \Cref{Algebraic Gysin restriction for normally nonsingular embeddings} discusses along the lines of \cite{banagllgysin} the first author's Verdier-Riemann-Roch type formula for the algebraic Gysin restriction of the Hodge-theoretic characteristic classes $IT_{1, \ast}$ with respect to suitably normally nonsingular closed algebraic regular embeddings.
Finally, in \Cref{Algebraic Gysin restriction in transverse setup}, we derive our algebraic Gysin restriction formula for $IT_{1, \ast}$ in a transverse setup (see \Cref{theorem gysin restriction of hodge theoretic cc}) by invoking \Cref{main result blowup}.

\subsection{Topological Normally Nonsingular Embeddings}\label{Topological normally nonsingular embeddings}
\begin{defn}[see Definition 3.1 in \cite{banagllgysin}]
A \emph{topological stratification} of a topological space $X$ is a filtration
$$
X = X_{n} \supseteq X_{n-1} \supseteq \dots \supseteq X_{1} \supseteq X_{0} \supseteq X_{-1} = \varnothing
$$
by closed subsets $X_{i}$ such that the difference sets $X_{i} - X_{i-1}$ are topological manifolds of pure dimension $i$, unless empty.
The connected components $X_{\alpha}$ of these difference sets are called the \emph{strata}.
We will often write stratifications as $\mathcal{X} = \{X_{\alpha}\}$.
\end{defn}

The following definition of locally cone-like topological stratifications, which are also known as \emph{cs-stratifications}, is due to Siebenmann \cite{siebenmann}; see also \cite[Def. 4.2.1, p. 232]{schuermannbook}.

{\begin{defn}[see Definition 3.2 in \cite{banagllgysin}]
A topological stratification $\{X_{i}\}$ of $X$ is called \emph{locally cone-like} if for all $x \in X_{i} - X_{i-1}$ there is an open neighborhood $U$ of $x$ in $X$, a compact topological space $L$ with filtration
$$
L = L_{n-i-1} \supseteq L_{n-i-2} \supseteq \dots \supseteq L_{0} \supseteq L_{-1} = \varnothing,
$$
and a filtration preserving homeomorphism $U \cong \mathbb{R}^{i} \times \operatorname{cone}^{\circ}(L)$, where $\operatorname{cone}^{\circ}(L)$ denotes the open cone on $L$.
\end{defn}

We shall employ the following notion of normally nonsingular embedding of topological spaces that respects an ambient locally cone-like topological stratification.

\begin{defn}[see Definition 3.3 in \cite{banagllgysin}] \label{definition normally nonsingular embedding}
Let $X$ be a topological space with locally cone-like topological stratification $\mathcal{X} = \{X_{\alpha}\}$ and let $Y$ be any topological space.
An embedding $g \colon Y \hookrightarrow X$ is called \emph{normally nonsingular} (with respect to $\mathcal{X}$), if
\begin{enumerate}
\item $\mathcal{Y} := \{Y_{\alpha} := X_{\alpha} \cap Y\}$ is a locally cone-like topological stratification of $Y$,
\item there exists a topological vector bundle $\pi \colon E \rightarrow Y$ and
\item there exists a (topological) embedding $j \colon E \rightarrow X$ such that
\begin{enumerate}
\item $j(E)$ is open in $X$,
\item $j|_{Y} = g$, and
\item the homeomorphism $j \colon E \stackrel{\cong}{\longrightarrow} j(E)$ is stratum preserving, where the open set $j(E)$ is endowed with the stratification $\{X_{\alpha} \cap j(E)\}$ and $E$ is endowed with the stratification $\mathcal{E} = \{\pi^{-1}Y_{\alpha}\}$.
\end{enumerate}
\end{enumerate}
\end{defn}
Note that the above stratification $\mathcal{E}$ of the total space $E$ is automatically topologically locally cone-like.
For example, transverse intersections give rise to normally nonsingular inclusions as follows (compare the beginning of the proof of Proposition 6.3 in \cite{banagllgysin}).

\begin{thm}\label{normally nonsingular inclusions induced by transverse intersections}
(See Theorem 1.11 in \cite[p. 47]{gmsmt}.)
Let $X \subset W$ be a Whitney stratified subset of a smooth manifold $W$.
Suppose that $M \subset W$ is a smooth submanifold of codimension $r$ that is transverse to every stratum of $X$, and that $Y = M \cap X$ is compact.
Then, the inclusion $g \colon Y \hookrightarrow X$ is normally nonsingular of codimension $r$ with respect to the normal bundle $\nu = \nu_{M \subset W}|_{Y}$ given by restriction of the normal bundle $\nu_{M \subset W}$ of $M$ in $W$.
\end{thm}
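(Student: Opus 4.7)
The plan is to promote a smooth tubular neighborhood of $M$ in $W$ to a stratum-preserving tubular neighborhood of $Y$ in $X$, exploiting transversality together with Thom--Mather theory. As a preliminary step, the stratification condition (1) of \Cref{definition normally nonsingular embedding} is immediate: by transversality of $M$ with every stratum $X_\alpha$, the intersections $Y_\alpha := M \cap X_\alpha$ are smooth submanifolds, and the Whitney conditions descend from those of $X$ exactly as in \Cref{lemma transversality of intersections}. Since Whitney stratified sets are topologically locally cone-like (Thom--Mather), the resulting filtration of $Y$ is locally cone-like.

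Next, I would choose a smooth tubular neighborhood $j_0 \colon \nu_{M \subset W} \hookrightarrow W$ of $M$ in $W$, restricting to the inclusion on the zero section, with open image $U \subset W$. Restricting the bundle to $Y$ produces $\pi \colon E := \nu_{M \subset W}|_Y \to Y$, the topological vector bundle required by condition (2). What remains is to construct an embedding $j \colon E \to X$ satisfying condition (3); the naive choice $j_0|_E$ is not directly suitable, since the image $j_0(E) = U$ differs in general from $X \cap U$.

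The crux is to arrange $j_0$ compatibly with the Whitney stratification of $X \subset W$. Using Mather's control-data formalism for Whitney stratified sets, one can secure a tubular retraction $\rho := \pi \circ j_0^{-1} \colon U \to M$ with the property that on every stratum, $\rho|_{X_\alpha \cap U}$ is a smooth submersion onto an open neighborhood of $Y_\alpha$ in $M$; this property is automatic from transversality once the stratification-adapted tube is available. Since $Y$ is compact, after replacing $U$ with $\rho^{-1}(V)$ for a suitable open neighborhood $V$ of $Y$ in $M$, the restriction $\rho \colon X \cap \rho^{-1}(V) \to V$ is a proper stratified submersion. Thom's first isotopy lemma then yields a stratum-preserving homeomorphism between $X \cap \rho^{-1}(V)$ and a neighborhood of the zero section in $E$. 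A fiberwise radial reparametrization (for instance $v \mapsto v/(1-|v|)$ with respect to a bundle metric on $E$) reopens this neighborhood to all of $E$, producing the desired embedding $j \colon E \to X$ with $j|_Y = g$, open image in $X$, and stratum-preservation.

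The principal obstacle lies in this last step, namely the production of a tubular neighborhood of $M$ that is compatible with the Whitney stratification of $X$. This is not a formal consequence of transversality but relies on Mather's control-data machinery. Once available, transversality forces the retraction to be a stratified submersion, and the rest is a routine application of Thom's first isotopy theorem together with a fiberwise reparametrization. The complete execution in this generality is carried out as Theorem~1.11 on p.~47 of \cite{gmsmt}.
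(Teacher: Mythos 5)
Your outline correctly reconstructs the overall Goresky--MacPherson argument (control data, stratified retraction, Thom's First Isotopy Lemma, then radial reparametrization), but the key step where you assert properness is precisely where the gap lies. You claim that since $Y$ is compact, replacing $U$ with $\rho^{-1}(V)$ for a small open neighborhood $V$ of $Y$ in $M$ makes $\rho\colon X\cap\rho^{-1}(V)\to V$ proper. This is not true: shrinking the base does not control the tube fibers. A fiber $\rho^{-1}(y)\cap X$ is a closed subset of the open disk $\rho^{-1}(y)\cong\mathbb{R}^r$, and in general it can accumulate at the boundary of the tube, so preimages of compact sets in $V$ need not be compact. This is exactly the objection Pati raises to the original argument in \cite[p.~48]{gmsmt}, and it is the entire point of the theorem's compactness hypothesis.

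What the paper actually does is quite narrow: it records Pati's observation and then patches the GM argument by replacing the open tube with a closed disk bundle $D_{\varepsilon/2}$ of a fixed small radius, so that the relevant composition becomes proper, and by choosing $\varepsilon$ small enough that the sphere bundle $\partial D_{\varepsilon/2}$ is transverse to $X$ (using Whitney's condition B) so that the intersection is again Whitney stratified before invoking Thom's First Isotopy Lemma. Your proposal omits both ingredients: the radius control that gives properness, and the boundary transversality needed to have a Whitney stratified set to which the isotopy lemma applies. Without them, the step "the restriction is a proper stratified submersion" is unjustified and the argument does not close.
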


\begin{proof}
In \cite{pati}, Pati points out that one may wish to add precision to the argument given in \cite[p. 48]{gmsmt}, as Thom's First Isotopy Lemma is applied there to a composition $\Psi^{-1}(X) \subset E_{\varepsilon} \times (-\delta, 1+\delta) \rightarrow (-\delta, 1+\delta)$ which is not necessarily proper.
To address this, we impose the assumption that $Y = M \cap X$ be compact, and apply Thom's First Isotopy Lemma to the proper composition $\Psi^{-1}(X) \cap D_{\varepsilon/2} \subset E_{\varepsilon} \times (-\delta, 1+\delta) \rightarrow (-\delta, 1+\delta)$, where $D_{\varepsilon/2} \subset E_{\varepsilon}$ denotes the closed disk bundle of radius $\varepsilon/2$.
To assure that the intersection $\Psi^{-1}(X) \cap D_{\varepsilon/2}$ is Whitney stratified, we choose $\varepsilon > 0$ small enough to achieve, using Whitney's condition B, that $\Psi|_{\partial D_{\varepsilon/2}}$ is transverse to $X \subset W$.
\end{proof}

\subsection{Algebraic Gysin Restriction of $IT_{1, \ast}$ for Upwardly Normally Nonsingular Embeddings} \label{Algebraic Gysin restriction for normally nonsingular embeddings}
We review the Verdier-Riemann-Roch type formula for the algebraic Gysin restriction of $IT_{1, \ast}$ obtained in \cite{banagllgysin}.

\begin{defn}[see p. 1279 in \cite{banagllgysin}] \label{definition algebraic stratification}
An \emph{algebraic stratification} of a complex algebraic variety $X$ is a locally cone-like topological stratification $\{X_{2i}\}$ of $X$ such that the closed subspaces $X_{2i}$ are algebraic subsets of $X$.
\end{defn}

\begin{example}\label{example complex algebraic Whitney stratifications are algebraic stratifications}
Let $Z$ be a closed subvariety of a smooth complex algebraic variety $W$.
A Whitney stratification of $Z \subset W$ is called \emph{complex algebraic} if all its open strata are smooth complex algebraic subvarieties of $W$.
It is well-known that complex algebraic Whitney stratifications always exist (see e.g. \cite[Theorem, p. 43]{gmsmt}).
Furthermore, complex algebraic Whitney stratifications are algebraic stratifications in the sense of \Cref{definition algebraic stratification}.
(Here, we point out that the closures of the open strata are the same in the Zariski and the complex topology; see e.g. \cite[Corollary 1, p. 60]{mumfordred}.)
\end{example}

\begin{defn}[see Definition 3.4 in \cite{banagllgysin}] \label{compatibly stratifiable embeddings}
If $X$ and $Y$ are complex algebraic varieties and $g \colon Y \hookrightarrow X$ a closed algebraic embedding whose underlying topological embedding $g_{\operatorname{top}}$ in the complex topology is normally nonsingular, then we will call $g$ and $g_{\operatorname{top}}$ \emph{compatibly stratifiable} if there exists an algebraic stratification $\mathcal{X}$ of $X$ such that $g_{\operatorname{top}}$ is normally nonsingular with respect to $\mathcal{X}$ and the induced stratification $\mathcal{Y}$ is an algebraic stratification of $Y$.
\end{defn}

The algebraic normal bundle of a regular algebraic embedding does not necessarily reflect the normal topology near the subvariety.
In particular, the underlying topological embedding needs not be normally nonsingular.
This observation motivates the following

\begin{defn}[see Definition 6.1 in \cite{banagllgysin}]\label{definition tight embedding}
A closed regular algebraic embedding $Y \hookrightarrow X$ of complex algebraic varieties is called \emph{tight}, if its underlying topological embedding (in the complex topology) is normally nonsingular and compatibly stratifiable, with topological normal bundle $\pi \colon E \rightarrow Y$ as in \Cref{definition normally nonsingular embedding}, and $E \rightarrow Y$ is isomorphic (as a topological vector bundle) to the underlying topological vector bundle of the algebraic normal bundle $N_{Y}X$ of $Y$ in $X$.
\end{defn}

For example, closed embeddings of smooth complex varieties are tight, see Example 6.2 in \cite{banagllgysin}.

Next, we recall the notion of upward normal nonsingularity for tight embeddings.
For a closed regular embedding $V \hookrightarrow U$ of complex varieties, let $\pi \colon \operatorname{Bl}_{V}U \rightarrow U$ denote the blow-up of $U$ along $V$.
The exceptional divisor $E = \pi^{-1}(V) \subset \operatorname{Bl}_{V}U$ is the projectivization $\mathbb{P}(N)$ of the algebraic normal bundle $N$ of $V$ in $U$.

\begin{defn}[see Definition 6.5 in \cite{banagllgysin}]\label{definition upwardly normally nonsingular}
A tight embedding $Y \hookrightarrow X$ is called \emph{upwardly normally nonsingular} if the inclusion $E \subset \operatorname{Bl}_{Y \times 0}(X \times \mathbb{C})$ of the exceptional divisor $E$ is topologically normally nonsingular.
\end{defn}

According to Verdier \cite{verdierintcompl}, a regular closed algebraic embedding $g \colon Y \hookrightarrow X$ has an associated algebraic Gysin homomorphism on Borel-Moore homology
$$
g^{!}_{\alg} \colon H_{\ast}^{\operatorname{BM}} (X; \mathbb{Q}) \longrightarrow H_{\ast-2c}^{\operatorname{BM}} (Y; \mathbb{Q}),
$$
where $c$ is the complex codimension of $Y$ in $X$.

The following result is the first author's Verdier-Riemann-Roch type formula for the algebraic Gysin restriction of the Hodge-theoretic characteristic classes $IT_{1, \ast}$ (see \Cref{intersection generalized todd class}) with respect to upwardly normally nonsingular embeddings.

\begin{thm}[see Theorem 6.30 in \cite{banagllgysin}]\label{theorem Verdier-Riemann-Roch type formula}
Let $X, Y$ be pure-dimensional compact complex algebraic varieties and let $g \colon Y \hookrightarrow X$ be an upwardly normally nonsingular embedding.
Let $N = N_{Y}X$ be the algebraic normal bundle of $g$ and let $\nu$ denote the topological normal bundle of the topologically normally nonsingular inclusion underlying $g$.
Then
$$
g^{!}_{\alg}IT_{1, \ast}(X) = L^{\ast}(N) \cap IT_{1, \ast}(Y) = L^{\ast}(\nu) \cap IT_{1, \ast}(Y).
$$
\end{thm}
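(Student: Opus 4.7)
My plan is to reduce the identity to the zero-section case via the deformation to the normal cone --- this is precisely what the \emph{upward normal nonsingularity} hypothesis was designed to enable --- and then to compute directly using the Verdier--Riemann--Roch formulae for smooth pullbacks (\Cref{prop.mhcvrrsmpullb} and \Cref{prop.bfmvrrsmpullb}). The second equality $L^{\ast}(N) = L^{\ast}(\nu)$ will then follow formally from the tightness of $g$, since $L^{\ast}$ depends only on the underlying real topological vector bundle.

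Concretely, form $\mathbb{M} = \operatorname{Bl}_{Y \times 0}(X \times \mathbb{C}) \setminus \widetilde{X \times 0}$, where $\widetilde{X \times 0}$ is the strict transform of $X \times 0$. The flat projection $\rho \colon \mathbb{M} \to \mathbb{C}$ has generic fiber $X$ and special fiber the algebraic normal bundle $N = N_Y X$; the subvariety $Y \times \mathbb{C} \subset \mathbb{M}$ interpolates between $g \colon Y \hookrightarrow X$ at $t \neq 0$ and the zero section $s \colon Y \hookrightarrow N$ at $t = 0$, and intersection theory yields a factorization $g^{!}_{\alg} = s^{!}_{\alg} \circ \sigma$ on Borel--Moore homology, where $\sigma$ is the algebraic specialization to the normal cone. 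It thus suffices to verify two assertions: (a) $\sigma IT_{1, \ast}(X) = IT_{1, \ast}(N)$, which one obtains from the fact that the nearby-cycle functor in Saito's formalism takes $IC^{H}_{X}$ to $IC^{H}_{N}$ (up to a Tate twist) combined with compatibility of $\tau_{\ast}$ with specialization in a flat family, propagated through $MHT_{y \ast}$; and (b) $s^{!}_{\alg} IT_{1, \ast}(N) = L^{\ast}(N) \cap IT_{1, \ast}(Y)$. For (b), the bundle projection $\pi \colon N \to Y$ is smooth of relative complex dimension $r = \rk N$ with $T^{\ast}_{N/Y} \cong \pi^{\ast} N^{\ast}$, and $\pi^{\ast} IC^{H}_{Y}[r] \cong IC^{H}_{N}$ as objects of $MHM(N)$. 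Feeding this into \Cref{prop.mhcvrrsmpullb} and \Cref{prop.bfmvrrsmpullb}, and converting between normalized and unnormalized classes via \Cref{prop.relunnormtynormty}, one arrives at
\[
IT_{y \ast}(N) = T^{\ast}_{y}(N) \cap \pi^{\ast}_{\BM} IT_{y \ast}(Y).
\]
Applying $s^{!}_{\alg}$, using $s^{!}_{\alg} \pi^{\ast}_{\BM} = \id$ (since $\pi \circ s = \id_{Y}$), the projection formula for algebraic Gysin maps, and $s^{\ast} N = N$, yields $s^{!}_{\alg} IT_{y \ast}(N) = T^{\ast}_{y}(N) \cap IT_{y \ast}(Y)$. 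Specialization to $y = 1$ together with \Cref{equ.todd1ishirzelcohom} delivers the first claimed equality; the second is then immediate from tightness.

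The hardest step is (a), the compatibility of $IT_{1, \ast}$ with algebraic specialization. For the deformation to the normal cone of a possibly singular subvariety, the clean statement $\psi_{\rho} IC^{H}_{X} = IC^{H}_{N}$ (up to shift and Tate twist) in $MHM$ is subtle and demands careful weight-theoretic bookkeeping in Saito's theory. The upward normal nonsingularity is what makes this work out: it provides enough local product structure of $\mathbb{M}$ near the special fiber that one can either invoke Saito's functoriality directly or reduce locally to the multiplicativity under external products established in \Cref{corollary multiplicativity of intersection generalized todd class}. By contrast, the zero-section computation in (b) is mechanical once the VRR framework of \Cref{prop.mhcvrrsmpullb} and \Cref{prop.bfmvrrsmpullb} is in hand.
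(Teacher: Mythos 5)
Your overall strategy of deforming to the normal cone and splitting $g^{!}_{\alg} = s^{!}_{\alg} \circ \sigma$ is a reasonable plan, and your step (b) is essentially correct and can be made rigorous: $\pi^{\ast}IC^{H}_{Y}[r] \cong IC^{H}_{N}$ does hold for the vector bundle projection $\pi \colon N \to Y$, and combining \Cref{prop.mhcvrrsmpullb}, \Cref{prop.bfmvrrsmpullb}, \Cref{prop.relunnormtynormty}, and \Cref{prop.unnormtytdtimeschlambda} does produce $IT_{y\ast}(N) = \pi^{\ast}T^{\ast}_{y}(N) \cap \pi^{\ast}_{\BM}IT_{y\ast}(Y)$, from which $s^{!}_{\alg}IT_{1\ast}(N) = L^{\ast}(N) \cap IT_{1\ast}(Y)$ follows by the projection formula and $s^{!}_{\alg}\pi^{\ast}_{\BM} = \id$. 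The passage $L^{\ast}(N) = L^{\ast}(\nu)$ via tightness is also fine.

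The genuine gap is in step (a). You assert that the nearby-cycle functor in Saito's formalism takes $IC^{H}_{X}$ (more precisely $IC^{H}_{\mathbb{M}}$) to $IC^{H}_{N}$ up to a Tate twist, and that this, combined with a compatibility of $\tau_{\ast}$ with algebraic specialization, gives $\sigma\, IT_{1\ast}(X) = IT_{1\ast}(N)$. Neither half of this is justified, and the first half is not a general fact: for an arbitrary one-parameter degeneration the nearby cycles of an $IC$ sheaf need not be the $IC$ sheaf of the special fiber --- there can be nontrivial monodromy and extra perverse constituents, and in the deformation to the normal cone the general fiber $X$ and special fiber $N_{Y}X$ can have very different singularity structures. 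The appeal to upward normal nonsingularity as "providing enough local product structure" to force $\psi^{H}_{\rho}IC^{H}_{\mathbb{M}} \simeq IC^{H}_{N}$ is where the argument breaks down: \Cref{definition upwardly normally nonsingular} is a purely topological condition on the inclusion of the exceptional divisor $E \subset \operatorname{Bl}_{Y\times 0}(X\times\mathbb{C})$, and there is no mechanism offered for translating that topological normal nonsingularity into the asserted structural statement about Saito's nearby-cycle functor on mixed Hodge modules. Even if one could deduce the corresponding topological statement about $\psi_{\rho}IC_{\mathbb{M}}$ from the tubular neighborhood of $E$, lifting it to an isomorphism in $MHM$ would require an a priori comparison morphism together with conservativity of $\operatorname{rat}$, and you do not construct such a morphism. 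Absent a proof of step (a), the argument does not go through; the hard work of Theorem 6.30 of \cite{banagllgysin} is precisely to circumvent or establish this specialization step, and the cited proof proceeds differently (working with the blow-up directly rather than through $\psi^{H}$), using the topological normal nonsingularity of $E$ as a hypothesis on topological Gysin restriction rather than as a statement about mixed Hodge modules.
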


\subsection{Algebraic Gysin Restriction of $IT_{1, \ast}$ in a Transverse Setup} \label{Algebraic Gysin restriction in transverse setup}
Tight embeddings arise frequently from transverse intersections in ambient smooth varieties, as we shall discuss next.
For this purpose, we require the notion of Tor-independence (compare \cite[p. 1312]{banagllgysin}), which can be thought of as a transversality condition in algebraic settings.
For indications of this viewpoint in the literature, see for instance Section 1.6 in Baum-Fulton-MacPherson \cite[p. 165]{bfm2}, Definition 1.1.1 in Levine-Morel \cite[p. 1]{lm}, as well as Sierra's notion of homological transversality \cite{sierra}.

\begin{defn}\label{definition tor independence}
Closed subschemes $X, Y \subset S$ of a scheme $S$ are called \emph{Tor-independent} if
$$
\operatorname{Tor}_{i}^{\oO_{S}}(\oO_{X}, \oO_{Y}) = 0 \text{ for all } i > 0.
$$
\end{defn}

\begin{prop}[Proposition 6.3 in \cite{banagllgysin}] \label{proposition transversality implies tightness}
Let $M \hookrightarrow W$ be a closed algebraic embedding of smooth complex algebraic varieties.
Let $X \subset W$ be a (possibly singular) algebraic subvariety, equipped with an algebraic Whitney stratification and set $Y = X \cap M$.
If
\begin{itemize}
\item each stratum of $X$ is transverse to $M$, and
\item $X$ and $M$ are Tor-independent in $W$,
\end{itemize}
then the embedding $g \colon Y \hookrightarrow X$ is tight.
\end{prop}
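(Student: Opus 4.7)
The plan is to verify the three clauses of \Cref{definition tight embedding} in turn, corresponding to (i) the topological normal nonsingularity of $g_{\operatorname{top}}$, (ii) compatible algebraic stratifiability, and (iii) the identification of the topological normal bundle of the normally nonsingular embedding with the underlying topological vector bundle of the algebraic normal bundle $N_{Y}X$. The first two clauses will be purely geometric/topological and rest on the given transversality, while the third is the point at which the Tor-independence hypothesis is indispensable.

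First, I would apply \Cref{normally nonsingular inclusions induced by transverse intersections} to the Whitney stratified subset $X\subset W$ and the smooth complex submanifold $M\subset W$; since by hypothesis every stratum of $X$ is transverse to $M$, this yields at once that $g_{\operatorname{top}}\colon Y\hookrightarrow X$ is a topologically normally nonsingular embedding, with topological normal bundle $\nu := \nu_{M\subset W}|_{Y}$ obtained by restricting the smooth normal bundle of $M$ in $W$. (If $Y$ is not compact, one first passes to a compact exhaustion, normal nonsingularity being a local condition.) For clause (ii), fix an algebraic Whitney stratification $\mathcal{X}=\{X_{\alpha}\}$ of $X$ witnessing the transversality; the induced stratification has strata $Y_{\alpha}=X_{\alpha}\cap M$, and by \Cref{lemma complex transversality and normal bundles} each $Y_{\alpha}$ is a smooth complex algebraic subvariety of $M$. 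The closed filtration of $Y$ is then given by the complex algebraic subsets $X_{2i}\cap M\subset Y$, while the locally cone-like structure is inherited from that of $\mathcal{X}$ via the local product structure of the normally nonsingular embedding. Hence $\mathcal{Y}$ is an algebraic stratification in the sense of \Cref{definition algebraic stratification}, so $g$ and $g_{\operatorname{top}}$ are compatibly stratifiable.

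The decisive step is clause (iii), namely the isomorphism between the topological normal bundle $\nu$ produced above and the underlying topological bundle of the algebraic normal bundle $N_{Y}X$. Since $M\hookrightarrow W$ is a regular closed embedding of smooth varieties, its ideal $\Ia_{M}\subset \oO_{W}$ is locally generated by a regular sequence, and the Koszul complex on those generators is a finite locally free resolution of $\oO_{M}$ by $\oO_{W}$-modules. The Tor-independence hypothesis $\Tor_{i}^{\oO_{W}}(\oO_{X},\oO_{M})=0$ for $i>0$ then guarantees that applying $-\otimes_{\oO_{W}}\oO_{X}$ preserves exactness, so the restricted Koszul complex is a locally free resolution of $\oO_{Y}$ over $\oO_{X}$. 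This implies that $Y\hookrightarrow X$ is again a regular closed embedding, whose conormal sheaf is canonically identified with $(\Ia_{M}/\Ia_{M}^{2})|_{Y}=(N_{M}^{\vee}W)|_{Y}$. Dualizing yields an isomorphism of algebraic vector bundles $N_{Y}X \cong (N_{M}W)|_{Y}$, and passing to underlying topological vector bundles gives the required identification with $\nu$.

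The main obstacle is precisely the third step: without Tor-independence, $Y$ need not even be regularly embedded in $X$, and the algebraic normal bundle $N_{Y}X$ need not agree topologically with the smooth bundle $\nu_{M\subset W}|_{Y}$ produced by the transverse intersection argument. Thus the roles of the two hypotheses are cleanly separated: smooth transversality of strata yields the topological normally nonsingular structure and the compatible algebraic stratification, while the algebraic condition of Tor-independence is exactly what is needed to match the two candidate normal bundles.
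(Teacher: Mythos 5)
Your decomposition of the argument into the three clauses of \Cref{definition tight embedding} is correct, and the argument for each clause is essentially the standard one: transversality plus \Cref{normally nonsingular inclusions induced by transverse intersections} gives topological normal nonsingularity with normal bundle $\nu_{M\subset W}|_Y$; transversality and algebraicity of the strata give compatible stratifiability; and Tor-independence applied to the Koszul resolution of $\oO_M$ over $\oO_W$ shows both that $g$ is a regular embedding and that $N_Y X \cong (N_M W)|_Y$ algebraically, whence the required identification of topological bundles. Note that the present paper merely cites this statement as Proposition 6.3 of \cite{banagllgysin} without reproducing its proof, so the comparison is against the cited source rather than anything printed here; your proof is the natural reconstruction and the reasoning of the third step correctly isolates where Tor-independence enters.

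One small point deserves a sharper treatment than your parenthetical offers. The formulation of \Cref{normally nonsingular inclusions induced by transverse intersections} given in this paper assumes $Y = M\cap X$ compact, precisely to make the application of Thom's First Isotopy Lemma proper (following Pati's remark). Normal nonsingularity is a global tubular-neighborhood condition, not a purely local one, so ``pass to a compact exhaustion'' does not by itself resolve the non-compact case: one would still need to glue the resulting tubular neighborhoods compatibly, which requires an additional argument (e.g.\ Thom--Mather control data). In the intended applications here $X$ is compact, so $Y$ is automatically compact and the issue is moot; but as stated your proof does not quite cover the general non-compact case without more care.
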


If the embeddings $X \hookrightarrow W \hookleftarrow M$ in the previous result satisfy an even stronger transversality condition defined next, then the embedding $Y=X \cap M \hookrightarrow X$ is upwardly normally nonsingular according to \Cref{upward transversality implies upward normal nonsingularity} below.

\begin{defn}[see Definition 6.4 in \cite{banagllgysin}] \label{defn upward transversality}
Let $X \hookrightarrow W \hookleftarrow M$ be closed algebraic embeddings of algebraic varieties with $M, W$ smooth.
We say that these embeddings are \emph{upwardly transverse}, if $X$ and $M$ are Tor-independent in $W$, there exists an algebraic Whitney stratification of $X$ which is transverse to $M$ in $W$, and there exists a (possibly non-algebraic) Whitney stratification on the strict transform of $X \times \mathbb{C}$ in $\operatorname{Bl}_{M \times 0}(W \times \mathbb{C})$ which is transverse to the exceptional divisor.
\end{defn}

\begin{prop}[see Corollary 6.7 in \cite{banagllgysin}]\label{upward transversality implies upward normal nonsingularity}
If $X \hookrightarrow W \hookleftarrow M$ are upwardly transverse embeddings, then the embedding $Y = X \cap M \hookrightarrow X$ is upwardly normally nonsingular.
\end{prop}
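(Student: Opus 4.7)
}
Unwinding \Cref{definition upwardly normally nonsingular}, I need to verify two things: first, that $g\colon Y\hookrightarrow X$ is a tight embedding, and second, that the inclusion of the exceptional divisor $E\subset \operatorname{Bl}_{Y\times 0}(X\times\mathbb{C})$ is topologically normally nonsingular with respect to some algebraic stratification. The first of these follows immediately from \Cref{proposition transversality implies tightness}: the two hypotheses of that proposition, namely Tor-independence of $X$ and $M$ in $W$ and the existence of an algebraic Whitney stratification of $X$ transverse to $M$, are both built into \Cref{defn upward transversality}. So the bulk of the work is the second condition, and it is here that the additional clause in the definition of upward transversality (concerning the strict transform in the ambient blow-up) is used.

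For the second condition, my plan is to lift the tightness argument to the deformation-to-the-normal-cone. Consider the ambient smooth variety $\widetilde W := \operatorname{Bl}_{M\times 0}(W\times\mathbb{C})$, its smooth Cartier divisor $\widetilde M := \mathbb{P}(N_{M\times 0}(W\times\mathbb{C}))$ (the exceptional divisor of $\widetilde W\to W\times\mathbb{C}$), and the strict transform $\widetilde X$ of $X\times\mathbb{C}$ in $\widetilde W$. The Tor-independence of $X$ and $M$ in $W$ carries over to $X\times\mathbb{C}$ and $M\times 0$ in $W\times\mathbb{C}$, and this is precisely the standard algebraic condition that guarantees the natural identification
\[
\widetilde X \;\cong\; \operatorname{Bl}_{Y\times 0}(X\times\mathbb{C}),
\]
under which the intersection $\widetilde X \cap \widetilde M$ is identified with the exceptional divisor $E$ of $\operatorname{Bl}_{Y\times 0}(X\times\mathbb{C})$. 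By the second clause of \Cref{defn upward transversality}, $\widetilde X$ carries a (possibly non-algebraic) Whitney stratification whose strata are transverse in $\widetilde W$ to the smooth divisor $\widetilde M$.

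At this point I can apply the transverse-intersection machinery of \Cref{normally nonsingular inclusions induced by transverse intersections} with ambient smooth manifold $\widetilde W$, smooth submanifold $\widetilde M$, and Whitney stratified subset $\widetilde X$: the resulting conclusion is that the inclusion $E = \widetilde M\cap \widetilde X \hookrightarrow \widetilde X$ is topologically normally nonsingular with normal bundle $\nu_{\widetilde M\subset \widetilde W}|_E$. Combined with the identification $\widetilde X\cong \operatorname{Bl}_{Y\times 0}(X\times\mathbb{C})$, this delivers exactly the condition in \Cref{definition upwardly normally nonsingular}. To ensure the hypotheses of \Cref{normally nonsingular inclusions induced by transverse intersections} are met, one must, strictly speaking, either assume $E$ is compact (e.g.\ after passing to a suitable open neighborhood, since the conclusion is local over $Y$) or invoke the local version of the transverse intersection theorem; this is the usual workaround and is consistent with the treatment of tightness in \cite{banagllgysin}.

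The main obstacle I anticipate is the algebraic identification $\widetilde X \cong \operatorname{Bl}_{Y\times 0}(X\times\mathbb{C})$ from Tor-independence, together with the local verification that this isomorphism sends the exceptional divisor of the blow-up of $X\times\mathbb{C}$ along $Y\times 0$ to $\widetilde X \cap \widetilde M$. This is standard but requires being careful that no embedded components are picked up when passing to strict transforms, precisely the point at which Tor-independence (rather than mere set-theoretic transversality) becomes indispensable. Once this identification is in hand, the proof is a direct application of the topological transverse-intersection theorem in the ambient smooth blow-up.
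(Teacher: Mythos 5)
Your argument is structurally correct and almost certainly mirrors the proof of \cite[Corollary 6.7]{banagllgysin}; the present paper simply cites that result without reproducing the proof. The decomposition into ``tightness via \Cref{proposition transversality implies tightness}'' plus ``normal nonsingularity of $E$ from the strict-transform clause of \Cref{defn upward transversality} together with \Cref{normally nonsingular inclusions induced by transverse intersections}'' is exactly what the definition of upward transversality is engineered to produce.

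One inaccuracy worth flagging: you attribute the identification $\widetilde X \cong \operatorname{Bl}_{Y\times 0}(X\times\mathbb{C})$, and the matching of exceptional divisors, to Tor-independence. That identification holds unconditionally -- for any closed subscheme $Z\subset W$, the strict transform of $Z$ under $\operatorname{Bl}_M W\to W$ is canonically $\operatorname{Bl}_{Z\cap M}Z$, with the exceptional divisor of $\operatorname{Bl}_{Z\cap M}Z$ being $\widetilde Z\cap\widetilde M$, by the Rees-algebra/$\operatorname{Proj}$ description of strict transforms. No transversality or Tor hypothesis enters at this step. Where Tor-independence genuinely does work is in \Cref{proposition transversality implies tightness} (which you correctly invoke): it is what forces $g\colon Y\hookrightarrow X$ to be a regular embedding whose topological normal bundle agrees with $N_YX$, i.e.\ tightness, without which \Cref{definition upwardly normally nonsingular} is not even stated for $g$. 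Your compactness caveat for applying \Cref{normally nonsingular inclusions induced by transverse intersections} is appropriate but harmless in context, since in the applications $X$ is compact, hence so are $Y$ and the projective bundle $E$ over it.
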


In the transverse situation, we obtain the following extension of \Cref{theorem Verdier-Riemann-Roch type formula} by eliminating the strict transform condition.
This constitutes the main result of this section.

\begin{thm}\label{theorem gysin restriction of hodge theoretic cc}
Let $X \hookrightarrow W \hookleftarrow M$ be closed algebraic embeddings of pure-dimensional complex algebraic varieties with $M, W$ smooth, $X, M$ irreducible, and $X$ compact.
We suppose that $X$ and $M$ are Tor-independent in $W$, that $X$ is generically transverse to $M$ in $W$ (see e.g. \cite[Section 3]{bw}), and that there exists a complex algebraic Whitney stratification of $X \subset W$ which is transverse to $M$ in $W$.
Then, the embedding $g \colon Y \hookrightarrow X$ of the compact pure-dimensional subvariety $Y = X \cap M \subset X$ is tight, and the algebraic normal bundle $N = N_{Y}X$ of $g$ and the topological normal bundle $\nu$ of the topologically normally nonsingular inclusion underlying $g$ satisfy
$$
g^{!}_{\alg}IT_{1, \ast}(X) = L^{\ast}(\nu) \cap IT_{1, \ast}(Y),
$$
where $g^{!}_{\alg} \colon H_{\ast}^{\BM}(X) \rightarrow H_{\ast}^{\BM}(Y)$ denotes the algebraic Gysin homomorphism associated to $g$ constructed by Verdier in \cite{verdierintcompl}.
\end{thm}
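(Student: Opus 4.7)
The strategy is to reduce the theorem to Theorem \ref{theorem Verdier-Riemann-Roch type formula}, which already gives the desired Verdier--Riemann--Roch formula for $IT_{1,\ast}$ under the assumption that $g$ is upwardly normally nonsingular. By Proposition \ref{upward transversality implies upward normal nonsingularity}, this will follow once I verify that the embeddings $X \hookrightarrow W \hookleftarrow M$ are upwardly transverse in the sense of Definition \ref{defn upward transversality}. The tightness of $g$ itself is delivered by Proposition \ref{proposition transversality implies tightness} directly from the Tor-independence hypothesis together with the existence of a complex algebraic Whitney stratification of $X$ transverse to $M$. Compactness of $Y = X \cap M$ is immediate from compactness of $X$ and closedness of $M$, and pure-dimensionality of $Y$ follows from tightness, since $g$ is then a regular embedding of pure codimension $\codim_{W} M$ in the pure-dimensional variety $X$.

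Two of the three conditions in Definition \ref{defn upward transversality}, namely Tor-independence and the existence of a complex algebraic Whitney stratification of $X \subset W$ transverse to $M$, are granted by hypothesis. The remaining substantive condition is the existence of a (possibly non-algebraic) Whitney stratification of the strict transform of $X \times \mathbb{C}$ in $\pi \colon W' = \operatorname{Bl}_{M \times 0}(W \times \mathbb{C}) \to W \times \mathbb{C}$ whose strata are transverse to the exceptional divisor $E = \pi^{-1}(M \times 0)$. This is exactly the conclusion of Theorem \ref{main result blowup}, provided I can exhibit $X \times \mathbb{C} \subset W \times \mathbb{C}$ as a Whitney stratified subset whose strata are complex submanifolds of $W \times \mathbb{C}$ transverse to the complex submanifold $M \times 0 \subset W \times \mathbb{C}$.

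For this, I would equip $X \times \mathbb{C}$ with the product Whitney stratification arising from the given stratification $\{S_{\alpha}\}$ of $X$; its strata $S_{\alpha} \times \mathbb{C}$ are complex submanifolds of $W \times \mathbb{C}$. At any point $(x, 0) \in (S_{\alpha} \times \mathbb{C}) \cap (M \times 0)$, a one-line computation
\[
T_{(x,0)}(S_{\alpha} \times \mathbb{C}) + T_{(x,0)}(M \times 0) = (T_{x}S_{\alpha} + T_{x}M) \oplus T_{0}\mathbb{C} = T_{x}W \oplus T_{0}\mathbb{C} = T_{(x,0)}(W \times \mathbb{C})
\]
reduces transversality of $S_{\alpha} \times \mathbb{C}$ with $M \times 0$ in $W \times \mathbb{C}$ to transversality of $S_{\alpha}$ with $M$ in $W$, which holds by hypothesis. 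Applying Theorem \ref{main result blowup} to this setup produces the required Whitney stratification on the strict transform, establishing upward transversality. Proposition \ref{upward transversality implies upward normal nonsingularity} then upgrades $g$ to an upwardly normally nonsingular embedding, and Theorem \ref{theorem Verdier-Riemann-Roch type formula} supplies the formula $g^{!}_{\alg}IT_{1,\ast}(X) = L^{\ast}(\nu) \cap IT_{1,\ast}(Y)$.

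The principal obstacle — and the reason the groundwork of Section \ref{Blow-up and transversality} had to be laid — is precisely the strict transform condition. Without Theorem \ref{main result blowup}, there is no a priori mechanism to transport a Whitney stratification from $X \times \mathbb{C}$ downstairs to its strict transform upstairs, nor to control transversality of the resulting strata with the exceptional divisor. Once that obstacle is removed, everything else in the proof is either a hypothesis of the theorem or an immediate application of a prior result from \cite{banagllgysin}.
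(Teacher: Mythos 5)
Your proof is correct and follows essentially the same route as the paper: lift the given complex algebraic Whitney stratification of $X$ to the product stratification of $X\times\mathbb{C}$, verify transversality to $M\times 0$, apply Theorem~\ref{main result blowup} to produce a Whitney stratification of the strict transform transverse to the exceptional divisor, conclude upward transversality (Definition~\ref{defn upward transversality}), then invoke Proposition~\ref{upward transversality implies upward normal nonsingularity} and Theorem~\ref{theorem Verdier-Riemann-Roch type formula}. The one place where you diverge is the pure-dimensionality of $Y$. The paper's proof cites the generic transversality hypothesis directly via \cite[Corollary 3.4]{bw}. You instead argue from tightness: $g$ is a regular embedding of constant codimension $\codim_W M$ into the pure-dimensional $X$. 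That argument is sound, but it quietly uses a fact that tightness alone does not assert, namely that the codimension of the regular embedding $g$ equals $\codim_W M$ uniformly over the possibly disconnected $Y$. This constancy comes from Tor-independence killing the excess normal bundle, so that $N_YX \cong j^{\ast}N_MW$ (a fact the paper spells out in the proof of Theorem~\ref{alg and top gysin}, not in Proposition~\ref{proposition transversality implies tightness}); without it, a regular embedding can have different codimensions on different components. If you make that identification explicit, your argument works and, interestingly, never uses the generic transversality hypothesis, which the paper in fact uses only for this one step. Both approaches land in the same place; the paper's is a one-line citation, while yours is self-contained modulo the normal-bundle identification.
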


\begin{proof}
Since $X$ admits by assumption a complex algebraic Whitney stratification which is transverse to $M$ in $W$, it follows that the product stratification on $X \times \mathbb{C}$ is a complex algebraic Whitney stratification which is transverse to $M \times 0$ in $W \times \mathbb{C}$.
Thus, \Cref{main result blowup} implies that the strict transform of $X \times \mathbb{C}$ in $\operatorname{Bl}_{M \times 0}(W \times \mathbb{C})$ admits a Whitney stratification which is transverse to the exceptional divisor.
Consequently, our transversality assumptions on $X$ and $M$ imply that the embeddings $X \hookrightarrow W \hookleftarrow M$ are upwardly transverse in the sense of \Cref{defn upward transversality}.
(Recall from \Cref{example complex algebraic Whitney stratifications are algebraic stratifications} that the complex algebraic Whitney stratification on $X$ is in particular an algebraic stratification.)
Hence, it follows from \Cref{upward transversality implies upward normal nonsingularity} that the embedding $g \colon Y \hookrightarrow X$ of the compact subvariety $Y = X \cap M \subset X$ is upwardly normally nonsingular (and, in particular, tight).
Moreover, generic transversality of $X$ and $M$ in $W$ implies that $Y$ is pure-dimensional according to \cite[Corollary 3.4]{bw}.
Finally, the algebraic Gysin restriction formula for $IT_{1, \ast}$ follows from \Cref{theorem Verdier-Riemann-Roch type formula}.
\end{proof}

\section{Algebraic versus Topological Gysin Restriction}\label{Topological and algebraic Gysin restriction}
A normally nonsingular inclusion $g \colon Y \hookrightarrow X$ (see \Cref{definition normally nonsingular embedding}) of a closed subset $Y \subset X$ with oriented normal bundle $\pi \colon E \rightarrow Y$ of rank $r$ induces on singular homology groups a topological Gysin homomorphism
$$
g^{!}_{\operatorname{top}} \colon H_{\ast}(X; \mathbb{Q}) \rightarrow H_{\ast-r}(Y; \mathbb{Q})
$$
given by the composition
$$
H_{\ast}(X) \xrightarrow{\operatorname{incl_{\ast}}} H_{\ast}(X, X \setminus Y) \xleftarrow[\cong]{e_{\ast}} H_{\ast}(E, E_{0}) \xrightarrow[\cong]{u \cap -} H_{\ast-r}(E) \xrightarrow[\cong]{\pi_{\ast}} H_{\ast-r}(Y),
$$
where $u \in H^{r}(E, E_{0})$ is the Thom class with $E_{0} = E \setminus Y$ the complement of the zero section in $E$, and $e_{\ast}$ denotes the excision isomorphism induced by the open embedding $j \colon E \rightarrow X$.

The next proposition interprets the topological Gysin map on singular homology in terms of transverse intersections.

\begin{prop}[see e.g. Proposition 2.5 in \cite{bw}] \label{proposition topological Gysin homomorphism fundamental classes}
Let $W$ be an oriented smooth manifold, $X, K \subset W$ Whitney stratified subspaces which are oriented pseudomanifolds with $K\subset X$ and $K$ compact.
Let $M\subset W$ be an oriented smooth submanifold which is closed as a subset.
Suppose that $M$ is transverse to all strata of the Whitney stratified subspaces $X \subset W$ and $K \subset W$, and that $M \cap X$ is compact.
Then the Gysin map
\[ g^!_{\operatorname{top}}: H_* (X;\rat) \longrightarrow H_{*-r} (Y;\rat) \]
associated to the normally nonsingular embedding $g: Y=M\cap X \hookrightarrow X$ (see \Cref{normally nonsingular inclusions induced by transverse intersections}), where $r$ is the (real) codimension of $Y$ in $X$, sends the fundamental class $[K]_X \in H_* (X;\rat)$ of $K$ to the fundamental class $[K\cap Y]_Y$ of the intersection $K\cap Y = M \cap K$ (which is again an oriented pseudomanifold),
\[ g^!_{\operatorname{top}} [K]_X = [K\cap Y]_Y. \]
\end{prop}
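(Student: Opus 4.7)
My plan is to unpack the definition $g^{!}_{\operatorname{top}} = \pi_{\ast} \circ (u \cap -) \circ e_{\ast}^{-1} \circ \operatorname{incl}_{\ast}$ and reduce the claim in two stages: first to the case where $X$ coincides with the pseudomanifold $K$, then to the classical smooth-manifold case. For the first reduction, I would apply \Cref{normally nonsingular inclusions induced by transverse intersections} to the pair $(K,W)$ to conclude that $g_{K} \colon K \cap Y \hookrightarrow K$ is normally nonsingular of codimension $r$, with topological normal bundle $\nu_{M \subset W}|_{K \cap Y}$. This is the restriction to $K \cap Y$ of the normal bundle $\nu_{M \subset W}|_{Y}$ of $g \colon Y \hookrightarrow X$, and the corresponding open tubular neighborhoods may be chosen so that the one for $K \cap Y \subset K$ is obtained by intersecting with $K$ the one for $Y \subset X$. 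Unwinding the four stages of the Gysin construction, one verifies stage by stage (functoriality of relative singular homology, excision for compatible open tubular neighborhoods, naturality of the cap product together with the restriction of Thom classes, and functoriality of pushforward to the base) that the square
\[
\xymatrix{
H_{n}(K) \ar[r]^-{g_{K}^{!}} \ar[d]_{\iota_{\ast}} & H_{n-r}(K \cap Y) \ar[d]^{\iota'_{\ast}} \\
H_{n}(X) \ar[r]^-{g^{!}_{\operatorname{top}}} & H_{n-r}(Y)
}
\]
commutes, where $\iota \colon K \hookrightarrow X$ and $\iota' \colon K \cap Y \hookrightarrow Y$ are the closed inclusions. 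Since $[K]_{X} = \iota_{\ast}[K]_{K}$ and $[K \cap Y]_{Y} = \iota'_{\ast}[K \cap Y]_{K \cap Y}$ by definition of the ambient fundamental class, the claim reduces to the identity $g_{K}^{!}[K]_{K} = [K \cap Y]_{K \cap Y}$ in the special case $X = K$.

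To treat this reduced case, let $K^{\circ}$ denote the top stratum of $K$, a dense open oriented smooth submanifold of $W$ of dimension $n$. By transversality of $M$ with every stratum of $K$, the intersection $K^{\circ} \cap M$ is an open dense smooth submanifold of the top stratum of $Y = K \cap M$. Working in Borel-Moore homology (which agrees with singular homology for the compact spaces $K$ and $K \cap M$), the Gysin construction is natural with respect to open inclusions, since all four pieces involved are compatible with restriction to open subsets. This yields a commutative square
\[
\xymatrix{
H_{n}(K) \ar[r]^-{g_{K}^{!}} \ar[d] & H_{n-r}(K \cap M) \ar[d] \\
H_{n}^{\BM}(K^{\circ}) \ar[r]^-{g_{\operatorname{sm}}^{!}} & H_{n-r}^{\BM}(K^{\circ} \cap M)
}
\]
whose bottom row is the topological Gysin map for the transverse intersection of two oriented smooth manifolds. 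The vertical restriction maps carry the pseudomanifold fundamental classes $[K]_{K}$ and $[K \cap M]_{K \cap M}$ to the Borel-Moore fundamental classes of the respective top strata, and are isomorphisms in this top degree (because the singular locus has strictly smaller dimension, hence vanishing Borel-Moore homology in top degree). The classical smooth-manifold case $g_{\operatorname{sm}}^{!}[K^{\circ}]^{\BM} = [K^{\circ} \cap M]^{\BM}$ can be verified directly in local coordinates using that the Thom class restricts to a generator on each normal fiber, and the injectivity of the vertical maps then forces $g_{K}^{!}[K]_{K} = [K \cap Y]_{K \cap Y}$, completing the reduction.

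The main obstacle I anticipate is establishing the two naturality squares above rigorously, in particular the second one, which requires choosing tubular neighborhoods, excision data, and Thom classes for $K$ that are compatible with restriction to the open subset $K^{\circ}$, and passing between singular and Borel-Moore homology in a manner consistent with the Gysin construction. The identification of the images of the pseudomanifold fundamental classes in $H^{\BM}_{\ast}$ of the top strata, while standard, must be carried out carefully to guarantee that the restriction maps are indeed injective on the classes of interest; the observation that the top stratum of the transverse intersection $K \cap M$ contains $K^{\circ} \cap M$ as an open dense subset is what ensures agreement of the Borel-Moore fundamental classes in top degree.
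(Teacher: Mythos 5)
Your two-stage plan — base change to reduce to $X=K$, then restrict to top strata to reduce to the smooth case — matches the structure of the argument in Proposition 2.5 of Banagl--Wrazidlo (which the paper simply cites), as is also reflected in the paper's later Remark~\ref{remark borel moore gysin fundamental classes}, which describes the Borel--Moore variant as ``base change $+$ the fact that the Gysin map sends fundamental class to fundamental class.'' The commutativity checks you flag as the main burden are indeed where the substance lies (naturality of the four pieces of the Gysin composition under the inclusion $(K,K\setminus(K\cap Y))\hookrightarrow(X,X\setminus Y)$, restriction of the Thom class of $\nu_{M\subset W}$, and Borel--Moore naturality under open restriction), and the identification $[K]^{\BM}\mapsto[K^{\circ}]^{\BM}$ uses, as you say, that the singular set of a pseudomanifold has codimension at least two so that restriction to the top stratum is an isomorphism in the top two degrees; the paper provides exactly the needed Borel--Moore cap-product machinery in \Cref{prop cap product construction chech and borel moore}, so your approach is correct and essentially the paper's own.
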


In the following, we shall discuss a variant of the topological Gysin map defined on Borel-Moore homology groups.
For reasonable compact spaces, both topological Gysin maps coincide under a natural identification of Borel-Moore homology with singular homology (see \Cref{top gysin borel moore and singular coincide} below).
This variant is needed here to show that the algebraic and topological Gysin maps coincide on algebraic cycles in a transverse setup (see \Cref{alg and top gysin} below).

Following Fulton \cite[p. 371, Eq. (1)]{fultonintth}, we employ here a construction of Borel-Moore homology defined for any topological space that can be embedded as a closed subset of some Euclidean space.
Namely, if the space $X$ is embedded as a closed subset of $\mathbb{R}^{n}$, then Fulton sets $H_{\ast}^{\BM}(X; \mathbb{Q}) = H^{n-\ast}(\mathbb{R}^{n}, \mathbb{R}^{n}-X; \mathbb{Q})$.
When $X$ is a compact ENR, then the Alexander duality isomorphism $H^{n-\ast}(\mathbb{R}^{n}, \mathbb{R}^{n}-X; \mathbb{Q}) \cong H_{\ast}(X; \mathbb{Q})$ provides a natural identification with singular homology, $H_{\ast}^{\BM}(X; \mathbb{Q}) \cong H_{\ast}(X; \mathbb{Q})$.
For a detailed discussion of this viewpoint on Borel-Moore homology, we refer to \cite[Appendix B.2, pp. 215 ff.]{fultonyoung}.

For a normally nonsingular inclusion $g \colon Y \hookrightarrow X$ of a closed subset $Y \subset X$ as above with oriented normal bundle $\pi \colon E \rightarrow Y$ of rank $r$, the topological Gysin map on Borel-Moore homology,
$$
g^{!!}_{\operatorname{top}} \colon H_{\ast}^{\BM}(X; \mathbb{Q}) \rightarrow H_{\ast-r}^{\BM}(Y; \mathbb{Q}),
$$
is given by the composition
$$
H_{\ast}^{\BM}(X) \xrightarrow{\operatorname{res}} H_{\ast}^{\BM}(U) \xrightarrow[\cong]{u \cap -} H_{\ast-r}^{\BM}(Y),
$$
where the first map restricts Borel-Moore cycles on $X$ to the open tubular neighborhood $U := j(E) \subset X$ of $Y \subset X$, and the second map is given by cap product with the Thom class $u \in H^{r}(U, U_{0}) = H^{r}(E, E_{0})$, where $U_{0} = U \setminus Y$.

The cap product used in the definition of the topological Gysin map on Borel-Moore homology is the one mentioned by Fulton in \cite[p. 371, Eq. (2)]{fultonintth}.
As explained in \cite[p. 375]{fultonintth}, its construction can be found in Fulton-MacPherson \cite[Eq. (2), p. 36]{fmp} in a more abstract setting, and involves relative \v{C}ech cohomology groups as discussed by Dold \cite[pp. 281 ff.]{dold}.
For convenience, we provide here the construction of this cap product together with some of its transformational properties (see \Cref{cor cap product fulton}).

\begin{remark}
Recall from \cite[Definition 6.1, p. 281]{dold} that the relative \v{C}ech cohomology groups $\check{H}^i (A, B)$ are defined for any pair $B \subset A$ of locally compact subspaces of an ENR.
Furthermore, for any locally compact subspaces $A_{1}, A_{2}$ of an ENR which are separated by $A_{1} \cap A_{2}$ (e.g. when $A_{1}, A_{2}$ are both open or closed in $A_{1} \cup A_{2}$), there is an excision isomorphism $\check{H}^{\ast}(A_{1} \cup A_{2}, A_{1}) \cong \check{H}^{\ast}(A_{2}, A_{1} \cap A_{2})$ by \cite[p. 286, Eq. (6.16)]{dold}.
\end{remark}

\begin{prop}\label{prop cap product construction chech and borel moore}
For a topological space $X$ that can be embedded as a closed subset of some Euclidean space, the following hold:
\begin{enumerate}
\item For any closed subset $Y \subset X$, there is a cap product
\[ \check{H}^i (X,X-Y) \times H^\BM_k (X)
 \stackrel{\cap}{\longrightarrow}
  H^\BM_{k-i} (Y). \]     
  
\item Every open subset $X' \subset X$ can be embedded as a closed subset of some Euclidean space, and if $X'$ is an open neighborhood of a closed subset $Y \subset X$, then we have a commutative diagram
 \begin{equation*}
 \xymatrix@C=2pt{ 
\check{H}^i (X,X-Y) \ar[d]_{\cong}^{\rho^*} & \times & H^\BM_{k} (X) \ar[d]^{r^{\ast}} \ar[rrrrr]^\cap & &&&& H^\BM_{k-i} (Y) \ar[d]^{=} \\
\check{H}^i (X', X'-Y) & \times & H^\BM_{k} (X') \ar[rrrrr]^\cap & &&&&
  H^\BM_{k-i} (Y),
}
\end{equation*}
where $\rho^{\ast}$ is the excision isomorphism induced by inclusion in \v{C}ech cohomology, and $r^{\ast}$ is restriction of a Borel-Moore cycle to an open subset.  

\item Any cartesian diagram of closed embeddings
\[ \xymatrix{
Y' = X'\cap Y \ar[d]_{g'} \ar[r]^-{f_{Y}} & Y \ar[d]^{g} \\
X' \ar[r]_{f_{X}} & X  
} \]
induces compatible cap products
 \begin{equation*}
 \xymatrix@C=2pt{
\check{H}^i (X',X'-Y') & \times & H^\BM_{k}(X') \ar[d]^{f_{X \ast}} \ar[rrrrrr]^\cap &&&&&& H^\BM_{k-i}(Y') \ar[d]^{f_{Y \ast}} \\
\check{H}^i (X, X-Y)  \ar[u]^{f_{X}^{\ast}} & \times & H^\BM_{k} (X) \ar[rrrrrr]^\cap &&&&&& H^\BM_{k-i} (Y),
}
\end{equation*}
where $f_{X}^{\ast}$ is the induced map on \v{C}ech cohomology, and $f_{X \ast}, f_{Y \ast}$ are induced on Borel-Moore homology by the proper maps $f_{X}, f_{Y}$.
\end{enumerate}
\end{prop}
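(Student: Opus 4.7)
The plan is to reduce every statement to a routine computation with cup products of relative \v{C}ech cohomology classes in an ambient Euclidean space, following Fulton--MacPherson \cite[\S2]{fmp}. Fix a closed embedding $X \hookrightarrow \mathbb{R}^n$, so that $Y \subset \mathbb{R}^n$ is also closed, and use Fulton's presentation
\[ H^\BM_k (X) = \check{H}^{n-k} (\mathbb{R}^n, \mathbb{R}^n - X), \qquad H^\BM_{k-i} (Y) = \check{H}^{n-k+i} (\mathbb{R}^n, \mathbb{R}^n - Y). \]
The key auxiliary input is the \v{C}ech excision isomorphism
\[
j^*: \check{H}^i (\mathbb{R}^n, \mathbb{R}^n - Y) \xrightarrow{\cong} \check{H}^i (X, X - Y),
\]
induced by the inclusion of pairs $(X, X-Y) \hookrightarrow (\mathbb{R}^n, \mathbb{R}^n - Y)$; this is a special case of excision for locally compact subspaces in \v{C}ech cohomology (cf.\ \cite[eq.~(6.16), p.~286]{dold}), applicable because $\mathbb{R}^n = X \cup (\mathbb{R}^n - Y)$ with intersection $X \cap (\mathbb{R}^n - Y) = X - Y$.

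For part (1), given $\alpha \in \check{H}^i(X, X-Y)$ and $\beta \in \check{H}^{n-k}(\mathbb{R}^n, \mathbb{R}^n - X)$, I would set $\alpha \cap \beta := (j^*)^{-1}(\alpha) \cup \beta$. The relative cup product lies a priori in $\check{H}^{n-k+i}(\mathbb{R}^n, (\mathbb{R}^n - Y) \cup (\mathbb{R}^n - X))$, but since $Y \subset X$ the second component simplifies to $\mathbb{R}^n - (Y \cap X) = \mathbb{R}^n - Y$, placing the result in $H^\BM_{k-i}(Y)$. Independence of the chosen embedding follows from the standard invariance of Borel-Moore homology as discussed in \cite[Appendix B.2]{fultonyoung}.

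For parts (2) and (3), the strategy is to translate every arrow into a map on relative \v{C}ech cohomology of pairs in $\mathbb{R}^n$ (or a relevant open subset) and invoke functoriality of the cup product. In (2), writing $X' = X \cap V$ with $V \subset \mathbb{R}^n$ open gives $H^\BM_k(X') = \check{H}^{n-k}(V, V - X')$; since any open subset of $\mathbb{R}^n$ embeds as a closed subset of $\mathbb{R}^{n+1}$ (e.g.\ via the graph of $v \mapsto 1/d(v, \mathbb{R}^n - V)$), so does $X'$. The restriction $r^*$ becomes pullback along $(V, V - X') \hookrightarrow (\mathbb{R}^n, \mathbb{R}^n - X)$, while $\rho^*$ corresponds, under the excision isomorphisms on both sides, to pullback along $(V, V - Y) \hookrightarrow (\mathbb{R}^n, \mathbb{R}^n - Y)$; commutativity of the square then reduces to naturality of the cup product under restriction to $V$. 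In (3), with $X, X', Y, Y'$ all taken closed in a single $\mathbb{R}^n$, the pushforwards $f_{X*}, f_{Y*}$ translate to pullbacks along the inclusions $(\mathbb{R}^n, \mathbb{R}^n - X) \hookrightarrow (\mathbb{R}^n, \mathbb{R}^n - X')$ and $(\mathbb{R}^n, \mathbb{R}^n - Y) \hookrightarrow (\mathbb{R}^n, \mathbb{R}^n - Y')$, while $f_X^*$ is pullback along the inclusion of pairs $(X', X'-Y') \hookrightarrow (X, X-Y)$, using $X' - Y' = X' \cap (X - Y)$. The commutativity of the resulting square is again a naturality statement for the cup product of relative cohomology classes.

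The main obstacle is the careful bookkeeping of the various instances of \v{C}ech excision and their compatibility with the pullback and restriction maps appearing in the two diagrams. The use of \v{C}ech rather than singular cohomology in the first factor of the cap product is essential: the subspace $Y$ is only assumed closed in $X$ and need not be an ANR, so the singular cohomology of the pair $(X, X-Y)$ can behave pathologically, whereas Dold's excision theorem applies at the required level of generality. Once these excision identifications are tracked through each diagram, each of the three claims reduces to a standard functoriality check for cup products of relative classes in $\mathbb{R}^n$.
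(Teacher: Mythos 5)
Your overall plan — express Borel–Moore homology via Fulton's presentation $H^\BM_k(X) = H^{n-k}(\mathbb{R}^n, \mathbb{R}^n - X)$, build the cap product from relative cup products in Euclidean space, and reduce the diagrams in (2) and (3) to naturality of cup products — is the same as the paper's. The fatal flaw is your ``key auxiliary input'': the map
\[
j^* \colon \check{H}^i(\mathbb{R}^n, \mathbb{R}^n - Y) \longrightarrow \check{H}^i(X, X-Y)
\]
induced by the inclusion of pairs is \emph{not} an excision isomorphism, so the definition $\alpha \cap \beta := (j^*)^{-1}(\alpha) \cup \beta$ is ill-posed. Dold's excision, quoted in the remark preceding the proposition, applies to subsets $A_1, A_2$ of an ENR that are \emph{separated} by $A_1 \cap A_2$, e.g.\ both open or both closed in $A_1 \cup A_2$. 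In your application $A_1 = \mathbb{R}^n - Y$ is open while $A_2 = X$ is closed in $\mathbb{R}^n$, and the separation hypothesis fails whenever $X$ has empty interior in $\mathbb{R}^n$ (then $\overline{A_1 - A_2} = \overline{\mathbb{R}^n - X} = \mathbb{R}^n$ meets $A_2 - A_1 = Y$). Concretely, take $X = Y = \{0\} \subset \mathbb{R}^n$ with $n \geq 1$: then $\check{H}^*(\mathbb{R}^n, \mathbb{R}^n - \{0\}) \cong \mathbb{Q}$ concentrated in degree $n$, whereas $\check{H}^*(\{0\}, \varnothing) = \check{H}^*(\{0\}) \cong \mathbb{Q}$ is concentrated in degree $0$, so $j^*$ vanishes in every degree.

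The paper never inverts $j^*$; it unwinds $\check{H}^i(X, X-Y)$ as the colimit $\operatorname{colim}_{(U,V)} H^i(U,V)$ over open neighborhood pairs $(U,V) \supset (X, X-Y)$ in $\mathbb{R}^n$, and builds the cap product at that level from the composite
\[
H^i(U, V) \times H^j(U, U - X) \stackrel{\cup}{\longrightarrow} H^{i+j}(U, V \cup (U - X)) \longrightarrow H^{i+j}(U, U - Y),
\]
which is defined because $V$ and $U - X$ are both open in $U$ (so the cup product into the union of open subsets exists) and because $U - Y = (X-Y) \cup (U-X) \subset V \cup (U-X)$ provides the restriction. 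Passing to the colimit over $(U,V)$ and using the genuine excision $H^j(U, U-X) \cong H^j(\mathbb{R}^n, \mathbb{R}^n - X)$ in the other slot produces the cap product of (1); parts (2) and (3) are then proved by checking the relevant diagrams commute at the level of neighborhood pairs before colimiting. Your sketches for (2) and (3) inherit the same flaw, since the ``excision identifications'' you want to track all presuppose the incorrect identification above.
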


\begin{proof}
Suppose that $X$ is embedded as a closed subset of $\mathbb{R}^{n}$.
For the construction of the cap product in claim (1), let $(U,V) \supset (X,X-Y)$ be an open neighborhood pair in $\mathbb{R}^{n}$.
Since both $V$ and $U-X$ are open in $U$, singular cohomology comes with a standard cup product
\[ H^i (U,V) \times H^j (U,U-X)
 \stackrel{\cup}{\longrightarrow}
  H^{i+j} (U, V \cup (U-X)). \]
Now it follows from $X - Y \subset V$ that $U-Y = (X-Y) \cup (U-X) \subset V \cup (U-X)$.
Hence, there is a restriction homomorphism
\[   H^{i+j} (U, V \cup (U-X)) \longrightarrow
   H^{i+j} (U, U-Y). \]
Composing with it, we obtain a cup product\footnote{In Fulton-MacPherson \cite[Eq. (2), p. 36]{fmp}, the additional asumption $V\cap X = X-Y$ is imposed.}
\[ H^i (U,V) \times H^j (U,U-X)
 \stackrel{\cup}{\longrightarrow}
  H^{i+j} (U, U-Y). \]   
Using excision, this can be rewritten as a product
\[ H^i (U,V) \times H^j (\mathbb{R}^{n},\mathbb{R}^{n}-X)
 \stackrel{\cup}{\longrightarrow}
  H^{i+j} (\mathbb{R}^{n}, \mathbb{R}^{n}-Y). \]   
Next, we note that $X-Y \subset X$ are locally compact subsets of the ENR $\mathbb{R}^{n}$ by \cite[Lemma 8.3, p. 80]{dold} because they are locally closed subsets by assumption.
Therefore, the relative \v{C}ech cohomology groups
\[ \check{H}^i (X,X-Y) := \colim \{ H^i (U,V) ~|~
             (U,V) \supset (X,X-Y)  \} \]
are defined as in \cite[Definition 6.1, p. 281]{dold}.
Consequently, there is a cup product 
\[ \check{H}^i (X,X-Y) \times H^j (\mathbb{R}^{n},\mathbb{R}^{n}-X)
 \stackrel{\cup}{\longrightarrow}
  H^{i+j} (\mathbb{R}^{n}, \mathbb{R}^{n}-Y). \] 
By the definition of Borel-Moore homology used by Fulton, we have
\[ H^j (\mathbb{R}^{n},\mathbb{R}^{n}-X) = H^\BM_{n-j} (X), \quad
    H^{i+j} (\mathbb{R}^{n},\mathbb{R}^{n}-Y) = H^\BM_{n-i-j} (Y). \]
We arrive at the desired cap product
\[ \check{H}^i (X,X-Y) \times H^\BM_k (X)
 \stackrel{\cap}{\longrightarrow}
  H^\BM_{k-i} (Y). \]
  
As for claim (2), let $X' \subset X$ be an open neighborhood of the closed subset $Y \subset X$.
Then, there exists an open subset $W \subset \mathbb{R}^{n}$ such that $X' = W \cap X$.
We note that $X'$ is a closed subset of the manifold $W$, which can be embedded as a closed subset of some Euclidean space.
If $(U,V) \supset (X,X-Y)$ is an open neighborhood pair in $\mathbb{R}^{n}$, then $(W \cap U, W \cap V) \supset (X',X'-Y)$ is an open neighborhood pair in $W$, and the diagram of restrictions
\[ \xymatrix@C=2pt{
H^i (U,V) \ar[d] & \times & H^j (\mathbb{R}^{n},\mathbb{R}^{n}-X) \ar[d]
  \ar[rrrrrr]^\cup &&&&&&
  H^{i+j} (\mathbb{R}^{n}, \mathbb{R}^{n}-Y) \ar[d] \\
H^i (W \cap U, W \cap V) & \times 
  & H^j (W,W-X') \ar[rrrrrr]^\cup &&&&&&
  H^{i+j} (W, W-Y)
} \]  
commutes by naturality of the cup product.
Finally, we obtain the desired commutative diagram by passing to the colimits to obtain the induced map on \v{C}ech cohomology (see \cite[Definition 6.3, p. 282]{dold}), and by definition of the restriction map in Borel-Moore homology (see \cite[Eq. (30), p. 218]{fultonyoung}).

To show claim (3), we observe that any open neighborhood pair $(U,V) \supset (X,X-Y)$ in $\mathbb{R}^{n}$ induces compatible cup products
\[ \xymatrix@C=2pt{
H^i (U,V) & \times & H^j (U,U-X') \ar[d] \ar[rrrrrr]^\cup &&&&&& H^{i+j} (U, U-Y') \ar[d] \\
H^i (U, V)  \ar[u] & \times & H^j (U,U-X) \ar[rrrrrr]^\cup &&&&&& H^{i+j} (U, U-Y),
} \]
where the vertical arrows are induced by inclusion.
The claim now follows by passing to  the colimits to obtain the induced map on \v{C}ech cohomology (see \cite[Definition 6.3, p. 282]{dold}), and by definition of the pushforward for proper maps in Borel-Moore homology (see \cite[p. 218]{fultonyoung}).
\end{proof}

\begin{remark}\label{remark borel moore gysin fundamental classes}
In \Cref{proposition topological Gysin homomorphism fundamental classes}, the assumption that $K$ is compact can be dropped when using the topological Gysin map on Borel-Moore homology instead of singular homology.
The proof is very similar, but uses base change as stated in \Cref{prop cap product construction chech and borel moore}(3), as well as the fact that the topological Gysin map in Borel-Moore homology maps the fundamental class to the fundamental class.
\end{remark}

\begin{cor}\label{cor cap product fulton}
For any closed subset $Y$ of an ENR $X$, we have a cap product
\[ H^i (X,X-Y) \times H^\BM_k (X)
 \stackrel{\cap}{\longrightarrow}
  H^\BM_{k-i} (Y) \]  
 with the properties (2) and (3) of \Cref{prop cap product construction chech and borel moore} with respect to singular cohomology.
\end{cor}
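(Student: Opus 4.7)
The plan is to reduce \Cref{cor cap product fulton} to \Cref{prop cap product construction chech and borel moore} by using the fact that, for the pair $(X,X-Y)$ appearing here, singular cohomology agrees naturally with \v{C}ech cohomology. More precisely, since $X$ is an ENR and $Y$ is closed in $X$, the complement $X-Y$ is open in $X$ and hence is itself an ENR (open subsets of ENRs are ENRs, see Dold \cite[Corollary 8.7, p. 83]{dold}). In particular $(X,X-Y)$ is a pair of locally compact subspaces of an ENR to which Dold's theory of \v{C}ech cohomology applies.

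First, I would write down the natural comparison transformation from \v{C}ech to singular cohomology. Embed $X$ as a closed subset of $\mathbb{R}^n$; for any open neighborhood pair $(U,V) \supset (X,X-Y)$ in $\mathbb{R}^n$, restriction yields a homomorphism $H^i(U,V) \to H^i(X,X-Y)$, and passing to the colimit over such pairs defines a natural homomorphism
\[
 \kappa \colon \check{H}^i (X,X-Y) \longrightarrow H^i (X,X-Y).
\]
Since $X$ and $X-Y$ are both ENRs, $\kappa$ is an isomorphism (cf.\ Dold \cite[\S 6, especially (6.11) and the subsequent discussion, p. 284]{dold}, together with the fact that for ENR pairs singular and \v{C}ech cohomology coincide).

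Second, I would define the desired singular cap product as the composite
\[
 H^i(X,X-Y) \times H^{\BM}_k (X)
  \xrightarrow{\kappa^{-1} \times \id}
 \check{H}^i(X,X-Y) \times H^{\BM}_k (X)
  \xrightarrow{\cap} H^{\BM}_{k-i}(Y),
\]
using the \v{C}ech cap product from \Cref{prop cap product construction chech and borel moore}(1).

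Third, properties (2) and (3) for this singular cap product follow from the corresponding properties of the \v{C}ech cap product once we check that $\kappa$ is natural with respect to the relevant maps. For property (2), given an open neighborhood $X' \subset X$ of $Y$, one observes that open subsets of ENRs are again ENRs, so $X'$ and $X'-Y$ are ENRs; the excision isomorphism $\rho^*$ for \v{C}ech cohomology and the restriction map in singular cohomology are both induced by restriction of neighborhood pairs in $\mathbb{R}^n$, hence commute with $\kappa$. For property (3), given a cartesian square of closed embeddings $f_X \colon X' \hookrightarrow X$, $f_Y \colon Y' \hookrightarrow Y$ with $X',Y'$ ENRs, both the \v{C}ech and singular pullbacks $f_X^*$ are induced by the same maps on neighborhood pairs, so they intertwine via $\kappa$. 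The diagram chase is then routine.

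The main (minor) obstacle is the verification that $\kappa$ is an isomorphism on the pair $(X,X-Y)$ and is natural with respect to open inclusions $X' \subset X$ and closed embeddings $f_X \colon X' \hookrightarrow X$ between ENRs; this is essentially packaged in Dold's treatment of \v{C}ech cohomology for locally compact subsets of ENRs, but one has to confirm that all the subspaces appearing in properties (2) and (3) of \Cref{prop cap product construction chech and borel moore} inherit the ENR property. Once this is done, the corollary is a formal consequence of \Cref{prop cap product construction chech and borel moore} via the natural identification $\kappa$.
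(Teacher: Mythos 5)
Your proposal follows essentially the same route as the paper: embed $X$ as a closed subset of $\mathbb{R}^n$, note that $X-Y$ is an ENR since it is open in the ENR $X$, invoke the natural isomorphism between \v{C}ech and singular cohomology for the ENR pair $(X,X-Y)$ (the paper cites \cite[Proposition 6.12, p.~285]{dold}), and transport the cap product and its functorial properties along that identification. The only difference is that you spell out the comparison map $\kappa$ and the naturality checks in slightly more detail than the paper does, which is harmless.
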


\begin{proof}
Since $X$ is an ENR, we can embed it as a closed subset of some Euclidean space $\mathbb{R}^{n}$ according to \cite[Proposition 8.1 and Lemma 8.2, p. 80]{dold}, so that the previous proposition yields a cap product using \v{C}ech cohomology. 
By assumption, $X$ is an ENR, and hence the open subset $X - Y \subset X$ is an ENR as well.
By \cite[Proposition 6.12, p. 285]{dold}, $\check{H}^i (X,X-Y)$ is therefore naturally isomorphic to singular cohomology $H^i (X,X-Y)$.
\end{proof}

\begin{remark}
For $Y = X$ an ENR, the cap product of \Cref{cor cap product fulton} specializes to a cap product of the form $H^{\ast}(X) \times H_{\ast}^{\BM}(X) \rightarrow H_{\ast}^{\BM}(X)$.
This is the cap product that appears in \Cref{theorem Verdier-Riemann-Roch type formula} and \Cref{theorem gysin restriction of hodge theoretic cc}.
When $X$ is a compact CW complex, this cap product corresponds to the ordinary cap product under the natural identification of Borel-Moore homology with singular homology.
(In fact, if $\alpha \in H^{i}(X; \mathbb{Q}) = \check{H}^{i}(X; \mathbb{Q})$ is represented by $\alpha_{U} \in H^{i}(U; \mathbb{Q})$ on some open neighborhood $U$ of $X \subset \mathbb{R}^{n}$, then there is a commutative diagram
\[ \xymatrix{
H_{n-(i+j)}(U, U-X) \ar[r]^{\alpha_{U} \cap -} & H_{n-j}(U, U-X) \\
H^{i+j}(U) \ar[u]^{- \cap \mu_{X}} \ar[r]_{\alpha_{U} \cup -} & H^{j}(U)  \ar[u]_{- \cap \mu_{X}},
} \]
where $\mu_{X} \in H_{n}(U, U-X; \mathbb{Q})$ denotes the orientation class of $X \subset U$.
Passing to \v{C}ech cohomology of $X$ in the lower horizontal row induces the ordinary cup product map
$$
\alpha \cup - \colon H^{n-(i+j)}(X) \rightarrow H^{n-j}(X),
$$
and the vertical maps become Alexander duality isomorphisms $H^{\ast}(X) \cong H_{n-\ast}(U, U-X)$ (see e.g. Bredon \cite[Theorem 8.3, p. 351]{bre}).
Then, the claim follows by applying $\operatorname{Hom}_{\mathbb{Q}}(-, \mathbb{Q})$ to the resulting commutative diagram, and identifying $H^{\ast}(-) \cong \operatorname{Hom}_{\mathbb{Q}}(H_{\ast}(-), \mathbb{Q})$ via the Kronecker pairing.
Note that the involved $\mathbb{Q}$-vector spaces are finite dimensional.)
\end{remark}

\begin{prop}\label{top gysin borel moore and singular coincide}
Let $(X, Y)$ be a compact CW pair such that $X$ is endowed with a topologically cone-like topological stratification.
If the inclusion $g \colon Y \hookrightarrow X$ is normally nonsingular with oriented normal bundle $\nu \colon E \rightarrow Y$ of rank $r$, the topological Gysin maps
\[ g^{!}_{\operatorname{top}}, g^{!!}_{\operatorname{top}}: H_{\ast}(X) \longrightarrow H_{\ast-r} (Y)   \]
coincide under the natural identification of Borel-Moore homology with singular homology.
\end{prop}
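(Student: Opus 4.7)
The plan is to express both topological Gysin maps, viewed as maps $H_\ast(X) \to H_{\ast-r}(Y)$ under the Alexander-duality identification $H^{\BM}_\ast \cong H_\ast$ available for compact ENRs (here, the compact CW spaces $X$ and $Y$), as cap product with a common Thom class $u_X \in H^r(X, X \setminus Y)$, obtained by pulling back the bundle Thom class $u \in H^r(U, U_0)$ along the excision isomorphism $H^r(X, X \setminus Y) \cong H^r(U, U_0)$, where $U = j(E)$.

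First I would reformulate $g^{!!}_{\operatorname{top}}$. Applying property (2) of Proposition 4.2 to the open inclusion $U \hookrightarrow X$ with Thom class $u \in H^r(U, U_0)$, the composition $H^{\BM}_\ast(X) \xrightarrow{\operatorname{res}} H^{\BM}_\ast(U) \xrightarrow{u \cap -} H^{\BM}_{\ast-r}(Y)$ equals cap product with $u_X$ applied to $H^{\BM}_\ast(X)$. Since $X$ is an ENR and $X \setminus Y$ is open in it, Corollary 4.3 identifies this Borel--Moore cap product with the ordinary singular refined cap product $H^r(X, X \setminus Y) \otimes H_\ast(X) \to H_{\ast-r}(Y)$. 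Hence, under the Alexander-duality identifications, $g^{!!}_{\operatorname{top}}(\alpha) = u_X \cap \alpha$ in singular homology.

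Next I would unwind $g^{!}_{\operatorname{top}}$. Using the open embedding $j \colon E \hookrightarrow X$ to identify $E$ with $U$, this map factors as $H_\ast(X) \to H_\ast(X, X \setminus Y) \xleftarrow[\text{excision}]{\cong} H_\ast(U, U_0) \xrightarrow{u \cap -} H_{\ast-r}(U) \xrightarrow{\pi_\ast} H_{\ast-r}(Y)$. Naturality of the relative singular cap product with respect to the inclusion of pairs $(U, U_0) \hookrightarrow (X, X \setminus Y)$, together with the projection-formula identity for Thom classes---namely that $u \cap -\colon H_k(U, U_0) \to H_{k-r}(U)$ followed by $\pi_\ast$ realizes the refined cap product landing directly in $H_{k-r}(Y)$, using that $\pi$ is a homotopy equivalence---rewrites this composition as $u_X \cap -\colon H_\ast(X) \to H_{\ast-r}(Y)$, matching the expression derived for $g^{!!}_{\operatorname{top}}$.

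The main obstacle is to verify the compatibility of the various cap products under Alexander duality, i.e. that the Borel--Moore refined cap product of Corollary 4.3 corresponds, under $H^{\BM}_\ast \cong H_\ast$, to the singular refined cap product assembled from cap with Thom class and the projection $\pi_\ast$. This rests on the fact, built into the construction in Proposition 4.2(1), that the cap product is induced by a cup product on an ambient Euclidean neighborhood $(\mathbb{R}^n, \mathbb{R}^n \setminus X)$, coupled with the identification of \v{C}ech and singular cohomology for ENR pairs recorded in Corollary 4.3, and with the standard naturality of Alexander duality. Once these bookkeeping compatibilities are in place, the equality $g^{!}_{\operatorname{top}} = g^{!!}_{\operatorname{top}}$ follows from the common presentation of both maps as $u_X \cap -$.
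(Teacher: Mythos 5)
Your strategy -- express both Gysin maps as cap product with a common Thom class $u_X \in H^r(X, X \setminus Y)$ and then reduce to a compatibility of the Borel--Moore and singular cap products under Alexander duality -- is in substance the same as the paper's, and you have correctly identified where the real work lies. However, the ``main obstacle'' you flag in your last paragraph \emph{is} essentially the entire content of the proof, and you address it only by listing the ingredients (the cup-product construction of the Borel--Moore cap product on $(\mathbb{R}^n, \mathbb{R}^n\setminus X)$, the \v{C}ech-vs.-singular identification for ENR pairs, naturality of Alexander duality) rather than carrying out the verification. Note also that your Step~2 for $g^!_{\operatorname{top}}$ is not doing any real work: the ``singular refined cap product landing in $H_{k-r}(Y)$'' has no existence independent of the composition $H_*(X) \to H_*(X, X\setminus Y) \xleftarrow{\cong} H_*(E,E_0) \xrightarrow{u\cap-} H_{*-r}(E) \xrightarrow{\pi_*} H_{*-r}(Y)$, so rewriting $g^!_{\operatorname{top}}$ as ``$u_X \cap -$'' in that sense is merely a notational relabeling of the definition.

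The paper's proof executes precisely this comparison, but from the dual side. It chooses a closed embedding $X \subset \mathbb{R}^n$, realizes the Alexander duality isomorphisms concretely as $-\cap\vartheta$ for an orientation $\vartheta$ of $\mathbb{R}^n$ (respectively restrictions $\vartheta|_U$, $\vartheta|_W$), fixes a closed tubular neighborhood $D\nu$ of $Y$ with $Z = X - D^\circ\nu$, and assembles three commutative diagrams: one relating $H^k(D\nu,S\nu)\leftarrow H^k(X,Z)\to H^k(X)$ to the corresponding Euclidean-pair homology groups; one handling restriction of the orientation to an open neighborhood $U\supset D\nu$; and one in which the top row is cap with (a lift of) the Thom class in the Euclidean pair and the bottom row is $\tau(\nu)\cup -$ on $H^{k-r}(D\nu)$. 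Concatenating and applying $\operatorname{Hom}_{\mathbb{Q}}(-,\mathbb{Q})$, then identifying $H^*(-)\cong\operatorname{Hom}_{\mathbb{Q}}(H_*(-),\mathbb{Q})$ via the Kronecker pairing (using finite-dimensionality), turns the bottom row into the sequence defining $g^!_{\operatorname{top}}$, the top row into the one defining $g^{!!}_{\operatorname{top}}$, and the vertical arrows into the natural identifications $H^{\BM}_*\cong H_*$. So the paper never compares two ``refined cap products'' abstractly; it dualizes first, which reduces everything to checking commutativity of concrete (co)chain-level Alexander duality squares. To turn your sketch into a complete argument you would need to supply exactly such diagrams (or equivalent (co)chain-level verifications), and the dualization maneuver is worth adopting because it avoids having to formulate and compare the two ``refined'' cap products directly on the homology side.
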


\begin{proof}
We may identify a closed tubular neighborhood of $Y \subset X$ with the total space of the disc bundle $D\nu$ of the topological normal bundle $\nu$ of the normally nonsingular inclusion $g \colon Y \hookrightarrow X$. Let $S\nu \subset D\nu$ denote the sphere bundle and
$D^\circ \nu = D\nu - S\nu$ the open disc bundle.
Furthermore, $Z = X - D^{\circ} \nu$ is a closed subset of $X$.
The ENR $X$ can be embedded a closed subset of some Euclidean space $\mathbb{R}^{n}$.
Then, we obtain a commutative diagram
\[ \xymatrix{
H_{n-k}(\mathbb{R}^{n} - S\nu, \mathbb{R}^{n} - D\nu) & H_{n-k}(\mathbb{R}^{n} - Z, \mathbb{R}^{n} - X) \ar[r]^-{\operatorname{incl}^{\ast}} \ar[l]_-{\operatorname{incl}^{\ast}}^-{\cong} & H_{n-k}(\mathbb{R}^{n}, \mathbb{R}^{n} - X) \\
H^{k}(D\nu, S\nu) \ar[u]^{\cong}_{- \cap \vartheta} & H^{k}(X, Z) \ar[u]^{\cong}_{- \cap \vartheta} \ar[r]_{\operatorname{incl}^{\ast}} \ar[l]^{\operatorname{incl}^{\ast}}_{\cong} & H^{k}(X)\ar[u]^{\cong}_{- \cap \vartheta}
} \]
and for any open neighborhood $U$ of $D \nu \subset \mathbb{R}^{n}$ a commutative diagram
\[ \xymatrix{
H_{n-k}(U- S\nu, U - D\nu) \ar[r]^-{\operatorname{incl}_{\ast}}_-{\cong} & H_{n-k}(\mathbb{R}^{n} - S\nu, \mathbb{R}^{n} - D\nu) \\
H^{k}(D\nu, S\nu) \ar[u]^{\cong}_{- \cap \vartheta|_{U}} \ar[r]^{=} & H^{k}(D\nu, S\nu) \ar[u]^{\cong}_{- \cap \vartheta},
} \]
where the vertical maps are Alexander duality isomorphisms described by cap product with an orientation $\vartheta$ of $\mathbb{R}^{n}$, or its restriction to $U$ (see e.g. Bredon \cite[Theorem 8.3, p. 351]{bre}, who also provides a description of the Alexander duality isomorphisms on the (co)chain level in \cite[p. 349]{bre}).

The Thom class $\tau(\nu) \in H^{r}(D\nu, S\nu) = \check{H}^{r}(D\nu, S\nu)$ can be represented by an element $t \in H^{r}(U, V)$ for some open neighborhood pair $(U, V)$ of $(D\nu, S\nu)$ in $\mathbb{R}^{n}$.
By making $V$ smaller if necessary, we may assume without loss of generality that the inclusion $(U, V) \subset (U, U - Y)$ is a homotopy equivalence of pairs.
Let $u \in H^{r}(U, U-Y)$ be the element that restricts to $t$.
Writing $W = U-S\nu$, we obtain the commutative diagram
\[ \xymatrix{
H_{n-(k-r)}(W, W-Y) \ar[rr]^{u|_{W} \cap -} & & H_{n-k}(W, W-D^{\circ}\nu) = H_{n-k}(U-S\nu, U-D\nu) \\
H^{k-r}(Y) \ar[u]^{\cong}_{- \cap \vartheta|_{W}} & H^{k-r}(D \nu) \ar[l]^{\operatorname{incl}^{\ast}}_{\cong} \ar[r]_{\tau(\nu) \cup -} & H^{k}(D\nu, S\nu) \ar[u]^{\cong}_{- \cap \vartheta|_{U}}
} \]
where the vertical maps are Alexander duality isomorphisms described by cap product with the orientation $\vartheta|_{W}$ of $W$ or the orientation $\vartheta|_{U}$ of $U$.
Again, one checks commutativity of the diagram by using the description of the Alexander duality isomorphisms on the (co)chain level provided in \cite[p. 349]{bre}.

Finally, the claim follows by applying $\operatorname{Hom}_{\mathbb{Q}}(-, \mathbb{Q})$ to the concatenation of the above three commutative diagrams, and identifying $H^{\ast}(-) \cong \operatorname{Hom}_{\mathbb{Q}}(H_{\ast}(-), \mathbb{Q})$ via the Kronecker pairing.
Note that the involved $\mathbb{Q}$-vector spaces are finite dimensional.
\end{proof}

Let $A_* (Z)$ denote the Chow groups of a complex variety $Z$ and
let $\cl = \cl_{Z}: A_* (Z) \to H^\BM_{2*} (Z)$ be the cycle map from
Chow homology to rational Borel-Moore homology.
Let
\[ u_{f} \in H^{2d} (X, X-Y) \]  
be the orientation class of the
regular closed embedding $f \colon Y \hookrightarrow X$
as in \cite[19.2, p. 378]{fultonintth}.  

The cap product of \Cref{cor cap product fulton} can be used to evaluate the orientation class of a closed regular embedding on a cycle class in Borel-Moore homology as follows.

\begin{thm}[see Theorem 19.2 in Fulton \cite{fultonintth}] \label{theorem gysin restriction of cycles is cap product with orientation class}
Let $f \colon Y \hookrightarrow X$ be a regular embedding of complex algebraic varieties of codimension $d$.
Then, for all $k$-cycles $\alpha$ on $X$,
$$
cl_{Y}(f^{!}_{\operatorname{alg}}\alpha) = u_{f} \cap cl_{X}(\alpha)
$$
in $H_{2k-2d}^{BM}(Y)$.
\end{thm}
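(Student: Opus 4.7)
The plan is to follow the standard deformation-to-the-normal-cone strategy, reducing the general statement to the model case of the zero section of an algebraic vector bundle, where the identity becomes a direct consequence of the compatibility of the cycle map with flat pullback.

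First I would dispatch the special case where $f = s \colon Y \hookrightarrow N$ is the zero section of an algebraic vector bundle $\pi \colon N \to Y$ of rank $d$. Here $s^{!}_{\alg} \colon A_{*}(N) \to A_{*-d}(Y)$ is defined as the inverse of the flat pullback isomorphism $\pi^{*} \colon A_{*-d}(Y) \stackrel{\cong}{\to} A_{*}(N)$ provided by homotopy invariance of Chow groups, while topologically $u_{s} \in H^{2d}(N, N-Y)$ is the Thom class of the underlying oriented real rank $2d$ bundle, with cap product giving the Thom isomorphism $u_{s} \cap (-) \colon H^{\BM}_{*}(N) \stackrel{\cong}{\to} H^{\BM}_{*-2d}(Y)$ inverse to the topological flat pullback $\pi^{*}_{\BM}$. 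Since the cycle map intertwines algebraic and topological flat pullbacks, $\cl_{N} \circ \pi^{*} = \pi^{*}_{\BM} \circ \cl_{Y}$ (both sides send $[V]$ to the fundamental class of $\pi^{-1}V$), the identity $\cl_{Y}(s^{!}_{\alg}\alpha) = u_{s} \cap \cl_{N}(\alpha)$ follows by passing to inverses.

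Next I would handle the general case using the deformation to the normal cone $M^{\circ} \to \mathbb{A}^{1}$, whose fiber over $0 \in \mathbb{A}^{1}$ is the algebraic normal bundle $N = N_{Y}X$ and whose remaining fibers are copies of $X$. Fulton's definition reads $f^{!}_{\alg} := s^{!}_{\alg} \circ \sigma_{\alg}$, where $\sigma_{\alg} \colon A_{k}(X) \to A_{k}(N)$ sends $[V]$ to the normal cone class $[C_{V \cap Y}V]$. In parallel, the orientation class $u_{f}$ of \cite[\S 19]{fultonintth} is constructed via the same deformation space in such a way that the cap product $u_{f} \cap (-)$ on Borel-Moore homology factors as a topological specialization $\sigma_{\operatorname{top}} \colon H^{\BM}_{*}(X) \to H^{\BM}_{*}(N)$ followed by the Thom isomorphism $u_{s} \cap (-)$. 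Given the bundle case, it therefore remains to verify the single compatibility $\cl_{N} \circ \sigma_{\alg} = \sigma_{\operatorname{top}} \circ \cl_{X}$.

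The main obstacle is precisely this last compatibility of algebraic and topological specialization maps. I would verify it on generators: for an irreducible $k$-dimensional subvariety $V \subset X$, form the closure $\widetilde{V} \subset M^{\circ}$ of $V \times (\mathbb{A}^{1} \setminus \{0\})$, which is a $(k+1)$-dimensional subvariety flat over $\mathbb{A}^{1}$, and check that both sides compute the class of the scheme-theoretic fiber $\widetilde{V} \times_{\mathbb{A}^{1}} \{0\} = C_{V \cap Y}V$. On the topological side this amounts to the assertion that the restriction of a Borel-Moore cycle to a closed fiber of a flat morphism coincides with the topological Gysin restriction to that fiber, which follows from compatibility of the cycle map with flat pullback along the smooth morphism $M^{\circ} \setminus N \to \mathbb{A}^{1} \setminus \{0\}$ together with the long exact sequence in Borel-Moore homology of the closed-open pair $(N, M^{\circ} \setminus N)$ in $M^{\circ}$. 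Once this is in hand, assembling the two preceding steps into the diagram chase that yields the desired formula is routine.
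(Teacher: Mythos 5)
The paper does not prove this statement itself; it is quoted directly from Fulton's \emph{Intersection Theory}, Theorem 19.2, as the bracketed citation indicates, and the subsequent remark only reconciles the notation (refined Gysin for $g = \operatorname{id}$ versus Verdier's $f^{!}_{\alg}$). Your reconstruction via deformation to the normal cone is precisely Fulton's own strategy: handle the zero-section of a vector bundle by the Thom isomorphism and the compatibility of the cycle map with flat pullback, then reduce the general case using the factorizations $f^{!}_{\alg} = s^{!}_{\alg} \circ \sigma_{\alg}$ and $u_f \cap (-) = (u_s \cap -) \circ \sigma_{\operatorname{top}}$, with the compatibility $\cl_N \circ \sigma_{\alg} = \sigma_{\operatorname{top}} \circ \cl_X$ as the crux. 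The one place that deserves more care in a full write-up is the construction and well-definedness of the Borel--Moore specialization $\sigma_{\operatorname{top}}$: a lift of the flat pullback class from $M^{\circ} \setminus N$ to $M^{\circ}$ is a priori only determined modulo the image of $H^{\BM}_{*}(N)$, and one must observe that the subsequent Gysin restriction to the central fiber $N$ annihilates this ambiguity because the normal bundle of $N$ in $M^{\circ}$ is trivial (it is pulled back from $\{0\} \subset \mathbb{A}^{1}$). This is exactly the point Fulton handles in Section 19; it is a detail to supply rather than a conceptual gap.
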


\begin{remark}
More generally, Theorem 19.2 in \cite{fultonintth} uses the refined Gysin homomorphism \cite[Section 6.2]{fultonintth} associated to the fiber square formed by the regular embedding $f \colon Y \hookrightarrow X$ and a morphism $g \colon X' \rightarrow X$.
However, for $g = \operatorname{id}_{X}$ the refined Gysin homomorphism coincides with Verdier's algebraic Gysin homomorphism $f^{!}_{\operatorname{alg}}$ for regular embeddings (see \cite[p. 117]{fultonintth}, where we note that $f^{!}_{\operatorname{alg}} = f^{\ast}$ in Fulton's notation).
\end{remark}

\begin{thm}\label{alg and top gysin}
Let $W$ be a smooth complex algebraic variety, 
$M\subset W$ a smooth closed subvariety and
$X\subset W$ a closed subvariety such that an algebraic Whitney stratification
of $X$ in $W$ is transverse to $M$ and
$X$ and $M$ are Tor-independent in $W$.
Then the algebraic and topological Gysin restriction homomorphisms
associated to the inclusion $g: Y = X\cap M \hookrightarrow X,$
\[ g^{!}_{\alg}, g^{!!}_{\operatorname{top}}: H^\BM_* (X) \longrightarrow H^\BM_{*-2d} (Y)   \]
coincide on algebraic cycles, where $d$ is the
complex codimension of the embedding $g$.
\end{thm}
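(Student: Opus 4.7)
The strategy is to express both the algebraic and topological Gysin maps as cap products with classes in $H^{2d}(X, X-Y)$ and then to verify that the two classes agree.

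By \Cref{proposition transversality implies tightness}, the hypotheses imply that $g \colon Y = X \cap M \hookrightarrow X$ is tight in the sense of \Cref{definition tight embedding}. In particular, $g$ is topologically normally nonsingular with an open tubular neighborhood $U = j(E) \subset X$, and the underlying topological complex vector bundle of the topological normal bundle $\pi \colon E \to Y$ is isomorphic to that of the algebraic normal bundle $N = N_{Y}X$. Moreover, by \Cref{theorem gysin restriction of cycles is cap product with orientation class} (Fulton's Theorem 19.2), for every $k$-cycle $\alpha$ on $X$ one has
$$
\operatorname{cl}_Y(g^{!}_{\alg} \alpha) = u_g \cap \operatorname{cl}_X(\alpha)
$$
in $H^{\BM}_{2k-2d}(Y)$, where $u_g \in H^{2d}(X, X-Y)$ is Fulton's orientation class.

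Next, I would unpack the definition of $g^{!!}_{\operatorname{top}}$ as the composition of restriction to $U$ followed by cap product with the Thom class $\tau(E) \in H^{2d}(U, U-Y)$. Using the compatibility of cap product with restriction to open subsets established in \Cref{prop cap product construction chech and borel moore}(2), this composition can be rewritten as cap product with the unique class $\widetilde{\tau} \in H^{2d}(X, X-Y)$ that corresponds to $\tau(E)$ under the excision isomorphism $H^{2d}(X, X-Y) \cong H^{2d}(U, U-Y)$. Hence,
$$
g^{!!}_{\operatorname{top}}(\operatorname{cl}_X(\alpha)) = \widetilde{\tau} \cap \operatorname{cl}_X(\alpha),
$$
and the theorem reduces to verifying the identity $u_g = \widetilde{\tau}$ in $H^{2d}(X, X-Y)$.

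To establish this identity, I would restrict both classes to $U$ via the excision isomorphism and identify $U$ with the total space of the oriented topological complex vector bundle $E$. The topological Thom class $\tau(E)$ is the canonical Thom class of $E$ determined by its complex orientation. Fulton's algebraic orientation class $u_g$, constructed via deformation to the normal cone, restricts under the same identification to the Thom class of the algebraic normal bundle $N_{Y}X$ regarded as an oriented complex vector bundle. Since tightness (\Cref{definition tight embedding}) provides an isomorphism of $N$ with $E$ as oriented topological complex vector bundles, and the Thom class of an oriented complex vector bundle is uniquely determined by its orientation, the two classes coincide. The main obstacle is precisely this last identification $u_g = \widetilde{\tau}$: while intuitively clear from the viewpoint of deformation to the normal cone, a rigorous argument requires tracking Fulton's construction of $u_g$ through the excision and normal bundle identifications, and invoking the canonical nature of the topological Thom class of a complex vector bundle. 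This compatibility is the heart of the comparison between algebraic and topological Gysin homomorphisms for complex regular closed embeddings.
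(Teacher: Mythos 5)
Your proposal correctly reduces the theorem to the identity $u_g = \widetilde{\tau}$ in $H^{2d}(X, X-Y)$: both $g^{!}_{\alg}$ and $g^{!!}_{\operatorname{top}}$ are cap products with classes there, and the work is to show the algebraic orientation class and the topological Thom class agree. This much matches the paper. However, the step where you actually close the identity is a genuine gap, and you half-admit it: you assert that $u_g$, ``constructed via deformation to the normal cone, restricts ... to the Thom class of the algebraic normal bundle $N_{Y}X$ regarded as an oriented complex vector bundle,'' and then say this is ``intuitively clear.'' But Fulton only establishes the equality $\delta^*(u_f) = t(\nu_M)$ between the orientation class and the Thom class in the case of a closed embedding of \emph{nonsingular} varieties (Fulton \cite[p.~372]{fultonintth}). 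For the regular embedding $g \colon Y \hookrightarrow X$ with $X$ and $Y$ possibly singular, there is no such statement to invoke; deformation to the normal cone for a singular ambient space does not directly hand you a tubular-neighborhood identification compatible with the topological Thom class. Your appeal to the ``canonical nature of the topological Thom class'' only tells you the Thom class is determined by its orientation, not that $u_g$ maps to it.

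The paper closes exactly this gap by exploiting the ambient smooth pair $M \subset W$ rather than working intrinsically with $g$. By Fulton's Lemma 19.2(a), the cartesian square of closed embeddings gives $i^*(u_f) = u_g$, where $f \colon M \hookrightarrow W$ is the smooth embedding. For $f$, Fulton's identification $\delta^*(u_f) = t(\nu_M)$ is available since $M, W$ are nonsingular. Then naturality of Thom classes under the bundle map $i_D \colon D\nu \to D\nu_M$, combined with the tightness identification $\nu \cong j^*\nu_M$ of topological normal bundles (which is where Tor-independence is essential to kill the excess normal bundle), yields $\rho^*(u_g) = i_D^*\delta^*(u_f) = i_D^* t(\nu_M) = t(\nu)$. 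This base-change reduction to the smooth case is the missing ingredient in your sketch; without it, the assertion that $u_g$ restricts to the Thom class is unproven.
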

\begin{proof}
By smoothness, the closed embedding $M\subset W$ is regular
with algebraic normal bundle $N_M W$.
The Tor-independence of $X$ and $M$ ensures that the closed embedding
$g: Y\hookrightarrow X$ is regular as well,
and that the excess normal bundle vanishes, i.e. the
canonical closed embedding $N_Y X \to j^* N_M W$ is an isomorphism
of algebraic vector bundles, where $j$ is the embedding
$j: Y \hookrightarrow M$.
According to Verdier \cite[p. 222, 9.2.1]{verdierintcompl},
the algebraic Gysin map of a closed regular embedding commutes with the
cycle map from Chow to Borel-Moore homology. Thus there is
a commutative diagram
\begin{equation}\label{dia.verdiershowbmcyclemapgysin}
\xymatrix{
A_{\ast} (X) \ar[d]_\cl \ar[r]^{g^{!}_{\alg}} &
  A_{\ast-d} (Y) \ar[d]^\cl \\
H^\BM_{2\ast} (X) \ar[r]^{g^{!}_{\alg}} &
  H^\BM_{2 \ast-2d} (Y).  
} 
\end{equation}
Since $X$ and $M$ are transverse and Tor-independent in $W$,
\Cref{proposition transversality implies tightness}
implies that the embedding $g: Y \hookrightarrow X$ is
tight, i.e. the underlying topological embedding (in the
analytic topology) is normally nonsingular with tubular
neighborhood described by the disc bundle $D\nu$ of a topological
normal bundle $\nu$ which is isomorphic to the underlying topological vector
bundle of the algebraic normal bundle $N_Y X$.
Let $S\nu \subset D\nu$ denote the sphere bundle and
$D^\circ \nu = D\nu - S\nu$ the open disc bundle. 
Then, \Cref{prop cap product construction chech and borel moore} yields a commutative diagram
\begin{equation}\label{borel moore cap product restriction}
\xymatrix@C=2pt{
H^i (X,X-Y) \ar[d]_\cong^{\rho^*} & \times & H^\BM_{p-j} (X) \ar[d]^{r^*}
  \ar[rrrrrr]^\cap &&&&&&
  H^\BM_{p-i-j} (Y) \ar[d]^{=} \\
H^i (D^\circ \nu, (D^\circ \nu)-Y) & \times 
  & H^\BM_{p-j} (D^\circ \nu) \ar[rrrrrr]^\cap &&&&&&
  H^\BM_{p-i-j} (Y),
}
\end{equation}
where the left vertical arrow is the excision isomorphism induced by inclusion, and the middle vertical arrow is restriction of a Borel-Moore cycle to an open subset.

Let
\[ u_g \in H^{2d} (X, X-Y) \]  
be the orientation class of the
regular closed embedding $g: Y \hookrightarrow X$
as in \cite[19.2, p. 378]{fultonintth}.  
We shall next compute this class.
Consider the cartesian diagram of closed embeddings
\[ \xymatrix{
Y = M\cap X \ar[d]_j \ar[r]^-g &
  X \ar[d]^i \\
M \ar[r]_f & W.  
} \]
By \cite[Lemma 19.2 (a), p. 379]{fultonintth}, applied
to the above cartesian diagram containing the regular
embeddings $f$ and $g$,
\[ i^* (u_f)= u_g \in H^{2d} (X,X-Y), \qquad
   i^*: H^{2d} (W,W-M) \to H^{2d} (X,X-Y). \]
Let $\nu_M$ be the underlying topological vector bundle
of the algebraic normal bundle $N_M W$.
By tightness of the embedding $g \colon Y \hookrightarrow X$, there
is an isomorphism $\nu = j^* \nu_M$, $j:Y\hookrightarrow M$ inherited from the isomorphism of algebraic vector bundles.
Let $i_D: D\nu \to D\nu_M$ denote the bundle map
covering $j$. Then by naturality of the Thom class
\[ t(\nu) = i^*_D t(\nu_M). \]

Consider the following factorization of the above cartesian diagram:
\[ \xymatrix{
Y \ar[d]_j \ar[r]^0 & D\nu \ar@{^{(}->}[r]^\rho \ar[d]_{i_D} &
  X \ar[d]^i \\
M \ar[r]^0 & D\nu_M \ar@{^{(}->}[r]^\delta & W.  
} \]
By \cite[p. 372, bottom]{fultonintth},
\[ \delta^* (u_f) = t(\nu_M), \]
since $f$ is a closed embedding of nonsingular varieties.
Therefore,
\[
\rho^* (u_g) = \rho^* i^* (u_f) = \i^*_D \delta^* (u_f) =
   i^*_D t(\nu_M) = t(\nu).
\]
By \Cref{theorem gysin restriction of cycles is cap product with orientation class},
\begin{equation} \label{equ.fultonexample1921}
u_g \cap \cl (\alpha) = \cl (g^{!}_{\alg} \alpha)
\end{equation}
for any algebraic cycle $\alpha \in A_* (X)$.  
Using Verdier's diagram (\ref{dia.verdiershowbmcyclemapgysin}),
and the commutativity of the cap diagram (\ref{borel moore cap product restriction}), we obtain for
$\alpha \in A_* (X),$
\begin{align*}
g^{!}_{\alg} (\cl (\alpha))  
&= \cl (g^{!}_{\alg} \alpha) \\
&= u_g \cap \cl (\alpha) \\
&= \rho^{\ast} (u_{g}) \cap r^{\ast} (\cl(\alpha)) \\
&= t(\nu) \cap r^{\ast} (\cl (\alpha)).
\end{align*}  
Thus for any class $\beta \in H^\BM_{\ast} (X)$ in the image of the cycle map,
\[ g^{!}_{\alg}(\beta) = t(\nu) \cap r^{\ast} \beta = g^{!!}_{\operatorname{top}}(\beta). \]  
\end{proof}

\section{Gysin Coherent Characteristic Classes}\label{Gysin coherent characteristic classes}
In this section, we recall the notion of Gysin coherent characteristic classes (see \Cref{definition gysin coherent characteristic classes} below), and state the uniqueness theorem, which is the main result of \cite{bw} (see \Cref{main result on Gysin coherent characteristic classes with respect to x} below).
In the present paper, we shall discuss algebraic characteristic classes such as Todd classes (see \Cref{example todd class}), Chern classes (see \Cref{example chern class}), as well as motivic Hodge classes (see \Cref{the class it}) within the framework of Gysin coherence.

In the following, by a variety we mean a pure-dimensional complex quasiprojective algebraic variety.

Let $\mathcal{X}$ be a family of inclusions $i \colon X \rightarrow W$, where $W$ is a smooth variety, and $X \subset W$ is a compact irreducible subvariety.
We require the following properties for $\mathcal{X}$:
\begin{itemize}
\item For every Schubert subvariety $X \subset G$ of a Grassmannian $G$, the inclusion $X \rightarrow G$ is in $\mathcal{X}$.
\item If $i \colon X \rightarrow W$ and $i' \colon X' \rightarrow W'$ are in $\mathcal{X}$, then the product $i \times i' \colon X \times X' \rightarrow W \times W'$ is in $\mathcal{X}$.
\item Given inclusions $i \colon X \rightarrow W$ and $i' \colon X' \rightarrow W'$ of compact subvarieties in smooth varieties, and an isomorphism $W \stackrel{\cong}{\longrightarrow} W'$ that restricts to an isomorphism $X \stackrel{\cong}{\longrightarrow} X'$, it follows from $i \in \mathcal{X}$ that $i' \in \mathcal{X}$.
\item For all closed subvarieties $X \subset M \subset W$ such that $X$ is compact and $M$ and $W$ are smooth, it holds that if the inclusion $X \rightarrow M$ is in $\mathcal{X}$, then the inclusion $X \rightarrow W$ is in $\mathcal{X}$.
\end{itemize}

Next, for a given family $\mathcal{X}$ of inclusions as above, by \emph{$\mathcal{X}$-transversality}, we mean a symmetric relation for closed irreducible subvarieties of a smooth variety that satisfies the following properties:
\begin{itemize}
\item The intersection $Z \cap Z'$ of two $\mathcal{X}$-transverse closed irreducible subvarieties $Z, Z' \subset W$ of a smooth variety $W$ is \emph{proper}, that is, $Z \cap Z'$ is pure-dimensional of codimension $c + c'$, where $c$ and $c'$ are the codimensions of $Z$ and $Z'$ in $W$, respectively.
\item The following analog of Kleiman's transversality theorem holds for the action of $GL_{n}(\mathbb{C})$ on the Grassmannians $G = G_{k}(\mathbb{C}^{n})$.
If $i \colon X \rightarrow G$ and $i' \colon X' \rightarrow G$ are inclusions in $\mathcal{X}$, then there is a nonempty open dense subset $U \subset GL_{n}(\mathbb{C})$ (in the complex topology) such that $X$ is $\mathcal{X}$-transverse to $g \cdot X'$ for all $g \in U$.
\item Locality: If $Z, Z' \subset W$ are $\mathcal{X}$-transverse closed irreducible subvarieties of a smooth variety $W$ and $U \subset W$ is a (Zariski) open subset that has nontrivial intersections with $Z$ and $Z'$, then $Z \cap U$ and $Z' \cap U$ are $\mathcal{X}$-transverse in $U$.
\end{itemize}

\begin{example}\label{example typical choice for family x and transversality}
The family $\mathcal{X}_{0}$ consisting of all inclusions of compact irreducible subvarieties in smooth varieties satisfies the above family requirements.
Furthermore, a notion of $\mathcal{X}_{0}$-transversality is obtained by calling two closed irreducible subvarieties $Z, Z' \subset W$ of a smooth variety $W$ $\mathcal{X}_{0}$-transverse if they admit complex algebraic Whitney stratifications (see \Cref{example complex algebraic Whitney stratifications are algebraic stratifications}) such that every stratum of $Z$ is transverse to every stratum of $Z'$ (as smooth submanifolds of $W$).
This uses Kleiman's transversality theorem \cite{kleiman}.
\end{example}

Recall from \Cref{Topological and algebraic Gysin restriction} that every inclusion $f \colon M \rightarrow W$ of a smooth closed subvariety $M$ of (complex) codimension $c$ in a smooth variety $W$ induces a topological Gysin map on singular homology, $f^{!}_{\operatorname{top}} \colon H_{\ast}(W; \mathbb{Q}) \rightarrow H_{\ast - 2c}(M; \mathbb{Q})$.

\begin{defn}\label{definition gysin coherent characteristic classes}
A \emph{Gysin coherent characteristic class $c\ell$ with respect to $\mathcal{X}$} is a pair
$$
c\ell = (c\ell^{\ast}, c\ell_{\ast})
$$
consisting of a function $c\ell^{\ast}$ that assigns to every inclusion $f \colon M \rightarrow W$ of a smooth closed subvariety $M \subset W$ in a smooth variety $W$ an element
$$
c\ell^{\ast}(f) = c\ell^{0}(f) + c\ell^{1}(f) + c\ell^{2}(f) + \dots \in H^{\ast}(M; \mathbb{Q}), \quad c\ell^{p}(f) \in H^{p}(M; \mathbb{Q}),
$$
with $c\ell^{0}(f) = 1$, and a function $c\ell_{\ast}$ that assigns to every inclusion $i \colon X \rightarrow W$ of a compact possibly singular subvariety $X \subset W$ of complex dimension $d$ in a smooth variety $W$ an element
$$
c\ell_{\ast}(i) = c\ell_{0}(i) + c\ell_{1}(i) + c\ell_{2}(i) + \dots + c\ell_{2d}(i) \in H_{\ast}(W; \mathbb{Q}), \quad c\ell_{p}(i) \in H_{p}(W; \mathbb{Q}),
$$
with $c\ell_{2d}(i) = [X]_{W}$, such that the following properties hold:
\begin{enumerate}
\item\label{axiom multiplicativity} \emph{(Multiplicativity)}
For every $i \colon X \rightarrow W$ and $i' \colon X' \rightarrow W'$, we have
$$
c\ell_{\ast}(i \times i') = c\ell_{\ast}(i) \times c\ell_{\ast}(i').
$$
\item\label{axiom isomorphism} \emph{(Isomorphism invariance)}
For every $f \colon M \rightarrow W$ and $f' \colon M' \rightarrow W'$, and every isomorphism $W \stackrel{\cong}{\longrightarrow} W'$ that restricts to an isomorphism $\phi \colon M \stackrel{\cong}{\longrightarrow} M'$, we have
$$
\phi^{\ast}c\ell^{\ast}(f') = c\ell^{\ast}(f).
$$
Moreover, for every $i \colon X \rightarrow W$ and $i' \colon X' \rightarrow W'$, and every isomorphism $\Phi \colon W \stackrel{\cong}{\longrightarrow} W'$ that restricts to an isomorphism $X \stackrel{\cong}{\longrightarrow} X'$, we have
$$
\Phi_{\ast}c\ell_{\ast}(i) = c\ell_{\ast}(i').
$$
\item\label{axiom locality} \emph{(Naturality)}
For every $i \colon X \rightarrow W$ and $f \colon M \rightarrow W$ such that $X \subset M$, the inclusion $i^{M} := i| \colon X \rightarrow M$ satisfies
$$
f_{\ast}c\ell_{\ast}(i^{M}) = c\ell_{\ast}(i).
$$
\item\label{axiom gysin} \emph{(Gysin restriction in a transverse setup)}
There exists a notion of $\mathcal{X}$-transversality such that the following holds.
For every inclusion $i \colon X \rightarrow W$ in $\mathcal{X}$ and every inclusion $f \colon M \rightarrow W$ such that $M$ is irreducible, and $M$ and $X$ are $\mathcal{X}$-transverse in $W$, the inclusion $j \colon Y \rightarrow M$ of the pure-dimensional compact subvariety $Y := M \cap X \subset M$ satisfies
$$
f^{!}_{\operatorname{top}} c\ell_{\ast}(i) = c\ell^{\ast}(f) \cap c\ell_{\ast}(j).
$$
\end{enumerate}
Such a class $c\ell$ is called \emph{Gysin coherent characteristic class} if $\mathcal{X} = \mathcal{X}_{0}$ is the family of all inclusions of compact irreducible subvarieties in smooth varieties (compare \Cref{example typical choice for family x and transversality}).
\end{defn}

The genus $|c\ell_{\ast}|$ of a Gysin coherent characteristic class $c\ell$ with respect to $\mathcal{X}$ is defined as the composition of $c\ell_{\ast}$ with the homological augmentation, $|c\ell_{\ast}| = \varepsilon_{\ast}c\ell_{\ast} \in \mathbb{Q}$.

\begin{example}\label{example l class as gysin coherent cc}
In \cite[Section 9]{bw}, the framework of Gysin coherence with respect to the family $\mathcal{X}_{0}$ of \Cref{example typical choice for family x and transversality} is applied to Goresky-MacPherson $L$-classes (see \Cref{example l class as gysin coherent cc} below), where $\mathcal{X}_{0}$-transversality is chosen to mean simultaneously topological transversality with respect to appropriate Whitney stratifications and generic transversality.
It is shown that the pair $(c\ell^{\ast}, c\ell_{\ast})$ given by $c\ell^{\ast}(f \colon M \hookrightarrow W) = L^{\ast}(\nu_{M \subset W})$ and $c\ell_{\ast}(i \colon X \hookrightarrow W) = i_{\ast}L_{\ast}(X)$, where $L^{\ast}$ is Hirzebruch's cohomological $L$-class of a vector bundle and $L_{\ast}$ is the Goresky-MacPherson $L$-class, forms a Gysin coherent characteristic class.
The associated genus is the signature $\sigma(X)$ of the Goresky-MacPherson intersection form on middle-perversity intersection homology of $X$.
\end{example}

The main result of \cite{bw} is the following

\begin{thm}[Uniqueness Theorem]\label{main result on Gysin coherent characteristic classes with respect to x}
Let $c\ell$ and $\widetilde{c\ell}$ be Gysin coherent characteristic classes with respect to $\mathcal{X}$.
If $c\ell^{\ast} = \widetilde{c\ell}^{\ast}$ and $|c\ell_{\ast}| = |\widetilde{c\ell}_{\ast}|$ for the associated genera, then we have $c\ell_{\ast}(i) = \widetilde{c\ell}_{\ast}(i)$ for all inclusions $i \colon X \rightarrow G$ in $\mathcal{X}$ of compact irreducible subvarieties in ambient Grassmannians.
\end{thm}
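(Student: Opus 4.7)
The plan is strong induction on $d := \dim X$. The base case $d = 0$ is immediate, since $X$ is then a point and both $c\ell_{\ast}(i)$ and $\widetilde{c\ell}_{\ast}(i)$ equal the normalizing ambient fundamental class $[X]_G$. For the inductive step, fix $i \colon X \hookrightarrow G$ in $\mathcal{X}$ with $\dim X = d > 0$, and assume the conclusion for all inclusions in $\mathcal{X}$ into a Grassmannian of a subvariety of dimension strictly less than $d$. Set $\delta_{\ast}(i) := c\ell_{\ast}(i) - \widetilde{c\ell}_{\ast}(i) \in H_{\ast}(G;\rat)$. The normalization $c\ell_{2d}(i) = [X]_G$ forces $\delta_{2d}(i) = 0$, while the genus hypothesis $|c\ell_{\ast}| = |\widetilde{c\ell}_{\ast}|$, combined with the naturality axiom (which implies the genus is independent of the chosen ambient variety), forces $\varepsilon_{\ast}\delta_{\ast}(i) = 0$.

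Since $G$ is smooth projective, the pairing $(\eta, \alpha) \mapsto \varepsilon_{\ast}(\eta \cap \alpha)$ on $H^{\ast}(G;\rat) \times H_{\ast}(G;\rat)$ is nondegenerate, so the vanishing of $\delta_{\ast}(i)$ is equivalent to $\varepsilon_{\ast}(\eta \cap \delta_{\ast}(i)) = 0$ for every $\eta \in H^{\ast}(G;\rat)$. The rational cohomology of $G$ is generated by Chern classes of the tautological bundles, and these generators can be represented (up to scalar multiples, after expansion into products) by fundamental classes $[M]_G$ of smooth closed subvarieties $M \subset G$ obtained as transverse zero loci of generic holomorphic sections of the tautological bundles. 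By the Kleiman-type transversality axiom, such $M$ can be chosen $\mathcal{X}$-transverse to $X$, so the task reduces to establishing $\varepsilon_{\ast}([M]_G \cap \delta_{\ast}(i)) = 0$ for every such $M$.

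For such $M$, with $f \colon M \hookrightarrow G$ and $j \colon Y := M \cap X \hookrightarrow M$, subtracting the Gysin-restriction axiom applied to $c\ell$ and $\widetilde{c\ell}$, and using the hypothesis $c\ell^{\ast}(f) = \widetilde{c\ell}^{\ast}(f)$, yields the master identity $f^{!}_{\operatorname{top}}\delta_{\ast}(i) = c\ell^{\ast}(f) \cap \delta_{\ast}(j)$ in $H_{\ast}(M;\rat)$, whence
$$\varepsilon_{\ast}\bigl([M]_G \cap \delta_{\ast}(i)\bigr) = \varepsilon_{\ast}\bigl(c\ell^{\ast}(f) \cap \delta_{\ast}(j)\bigr).$$
Since $\dim Y < \dim X$ and one may arrange via the Kleiman-type and locality axioms that $Y$ is compact, irreducible, and pure-dimensional with $f \circ j \in \mathcal{X}$, the outer inductive hypothesis applied to $f \circ j \colon Y \to G$ gives $f_{\ast}\delta_{\ast}(j) = 0$; this accounts for the leading $p = 0$ summand of the right-hand side. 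The remaining summands $\varepsilon_{\ast}(c\ell^{p}(f) \cap \delta_{\ast}(j))$ for $p \geq 1$ are controlled by iterating the same device, interpreting each factor $c\ell^{p}(f)$ through further cap products with smooth transverse subvarieties inside $G$ and recursing on the strictly decreasing dimensions of the successive intersections, terminating at degree-$0$ pairings that are pinned down by the genus hypothesis.

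The principal obstacle is that the outer induction delivers only $f_{\ast}\delta_{\ast}(j) = 0$ in $H_{\ast}(G)$, rather than the stronger statement $\delta_{\ast}(j) = 0$ in $H_{\ast}(M)$, and $M$ itself need not be a Grassmannian. The gap is bridged by observing that the pairings $\varepsilon_{\ast}(c\ell^{p}(f) \cap \delta_{\ast}(j))$ detect $\delta_{\ast}(j)$ only through further transverse intersections, each of which can, by the Kleiman-type axiom, be realized inside the ambient Grassmannian $G$ and therefore falls again within the scope of the induction. Subsidiary technicalities — namely preservation of irreducibility and pure-dimensionality through the iterated intersections and membership in $\mathcal{X}$ of every intermediate inclusion — are handled by the closure of $\mathcal{X}$ under products together with the Kleiman-type and locality axioms.
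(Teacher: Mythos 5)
There is a genuine gap in the recursive step, and the paragraph you offer to ``bridge'' it does not do so. After the Gysin axiom produces
$\varepsilon_{\ast}\bigl(f^{!}_{\operatorname{top}}\delta_{\ast}(i)\bigr) = \sum_{p\ge 0}\varepsilon_{\ast}\bigl(c\ell^{p}(f)\cap \delta_{\ast}(j)\bigr)$,
the $p=0$ summand vanishes directly by the genus hypothesis (no induction is needed for that). But for $p\ge 1$, the outer inductive hypothesis applied to $f\circ j\colon Y\to G$ yields only $f_{\ast}\delta_{\ast}(j)=0$ in $H_{\ast}(G;\rat)$, and this gives no control over $\varepsilon_{\ast}(c\ell^{p}(f)\cap\delta_{\ast}(j))$: the class $c\ell^{p}(f)\in H^{p}(M;\rat)$ is a characteristic class of the normal bundle $N_{M}W$, which is in general \emph{not} of the form $f^{\ast}\eta$ for $\eta\in H^{p}(G;\rat)$, so the projection formula $\varepsilon_{\ast}(f^{\ast}\eta\cap\delta_{\ast}(j))=\varepsilon_{\ast}(\eta\cap f_{\ast}\delta_{\ast}(j))$ does not apply. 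Your assertion that these pairings ``detect $\delta_{\ast}(j)$ only through further transverse intersections, each of which can \ldots\ be realized inside the ambient Grassmannian $G$'' conflates cohomology of $M$ with cohomology of $G$: $c\ell^{p}(f)$ lives on $M$, $M$ is not a Grassmannian, and both the Kleiman-type axiom and the inductive hypothesis apply only inside Grassmannians. (Had one been able to take $M$ itself a sub-Grassmannian, the inductive hypothesis would apply to $j\colon Y\to M$ directly and give $\delta_{\ast}(j)=0$ outright; but classes of rectangular sub-Grassmannians do not span $H^{\ast}(G;\rat)$ in general, so this route does not suffice either.)

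Two further points. First, the claim that the needed generators of $H^{\ast}(G;\rat)$ ``can be represented \ldots\ by fundamental classes $[M]_G$ of smooth closed subvarieties $M\subset G$ obtained as transverse zero loci of generic holomorphic sections of the tautological bundles'' is unjustified: only top Chern classes are zero loci; lower Chern classes are degeneracy loci, which for generic data are smooth only away from the deeper degeneracy strata; and whether every rational cohomology class of a Grassmannian is Poincar\'e dual to a smooth algebraic subvariety is a nontrivial question that your proof does not address. Second, the inductive hypothesis requires $Y\to G$ (or $Y\to M$) to lie in $\mathcal{X}$, hence $Y$ must be irreducible; the Kleiman-type axiom as stated only guarantees pure-dimensionality of the generic intersection, not irreducibility. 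The proof in \cite{bw} circumvents these obstructions by a different device --- the ``normally nonsingular expansions'' of Theorem 7.1 there, alluded to in the concluding remarks of the present paper --- which recursively reduce $c\ell_{\ast}(i)$ to genera of subvarieties obtained by intersecting against possibly \emph{singular} members of $\mathcal{X}$ (e.g.\ Schubert varieties) rather than against smooth $M\subset G$; that mechanism has no analogue in the argument you have sketched.
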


\begin{remark}\label{remark top class only for irreducible varieties}
An inspection of the proof of the uniqueness theorem above shows that it suffices to request the assumption $c \ell_{2d}(i) = [X]_{W}$ in the definition of Gysin coherent classes $c \ell$ only for irreducible $X$.
In fact, this assumption is only used in the proof of Theorem 7.1 and in the proof of Theorem 6.4 in \cite{bw}, where the varieties under consideration are irreducible.
\end{remark}

\section{The class $IT_{1, \ast}$ as a Gysin Coherent Characteristic Class}\label{the class it}

The class $IT_{1, \ast}$ fits into the framework of Gysin coherent characteristic classes (\Cref{definition gysin coherent characteristic classes}), as we shall show in \Cref{proposition it class is l type characteristic class} below.
Our method of proof requires moving varieties into transverse position by means of suitable generalizations of Kleiman's transversality theorem.
Such generalizations are known in the Cohen-Macaulay context (see Sierra \cite{sierra}).
Rational and normal Gorenstein singularities are Cohen-Macaulay.
For instance, Schubert varieties as well as toric varieties have rational singularities, and hence are Cohen-Macaulay.

As in \Cref{Gysin coherent characteristic classes}, by a variety we mean a pure-dimensional complex quasiprojective algebraic variety.
For later use in the proof of \Cref{proposition it class is l type characteristic class} below, we record the following

\begin{prop}\label{prop top component of it1}
For an irreducible projective variety $X$ of complex dimension $d$, the highest non-trivial homogeneous component of the class
$$
IT_{1, \ast}(X) = IT_{1, 0}(X) + IT_{1, 1}(X) + \dots \in H_{\ast}(X; \mathbb{Q})
$$
is the fundamental class, $IT_{1, 2d}(X) = [X]_{X}$.
\end{prop}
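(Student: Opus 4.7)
The plan is to reduce the claim to a direct computation on the smooth dense open locus of $X$, exploiting the fact that the top Borel--Moore (equivalently singular, since $X$ is compact) homology is controlled by any dense open subset. First, I would fix a smooth dense open subvariety $j \colon U \hookrightarrow X$ of pure dimension $d$, which exists because $X$ is irreducible. Since $X \setminus U$ has complex dimension at most $d-1$, the groups $H^{\BM}_{2d}(X \setminus U)$ and $H^{\BM}_{2d-1}(X \setminus U)$ vanish, so the localization exact sequence for Borel--Moore homology will force $j^{\ast}_{\BM} \colon H^{\BM}_{2d}(X) \to H^{\BM}_{2d}(U)$ to be an isomorphism sending $[X]_X \mapsto [U]_U$. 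Moreover, $H^{\BM}_k(X) = 0$ for $k > 2d$ by dimension reasons, so triviality of $IT_{1,\ast}(X)$ in degrees above $2d$ is automatic; it will remain to show that the degree $2d$ component of $IT_{1,\ast}(X)$ restricts on $U$ to $[U]_U$.

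Next, I would identify the relevant classes on $U$. By construction of the intermediate extension, $j^{\ast}(IC^H_X[-d]) = \rat^H_U$ in $MHM(U)$. The open inclusion $j$ is smooth with trivial relative cotangent bundle, so \Cref{prop.mhcvrrsmpullb} degenerates to the identity $j^{\ast}_K\, MHC_y[IC^H_X[-d]] = MHC_y[\rat^H_U]$, and the latter equals $\lambda_y(T^{\ast}U) \cap [\oO_U] = \sum_{p=0}^{d}[\Omega^p_U]\, y^p$ by the standard formula for the smooth case recorded earlier in the excerpt. Likewise, \Cref{prop.bfmvrrsmpullb} applied to the \'etale map $j$ reduces to the naturality identity $j^{\ast}_{\BM} \circ \tau_{\ast} = \tau_{\ast} \circ j^{\ast}_K$, so the degree $2d$ component of $\tau_{\ast}\, MHC_y[IC^H_X[-d]]$ on $X$ is determined by its counterpart on $U$ through the isomorphism from the previous paragraph.

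For that counterpart, formula~(\ref{taubfmecaposmoothx}) gives $\tau_{\ast}[\Omega^p_U] = (\ch^{\ast}(\Lambda^p T^{\ast}U) \cup \td^{\ast}(TU)) \cap [U]$, whose degree $2d$ piece equals $\rk(\Lambda^p T^{\ast}U) \cdot [U] = \binom{d}{p}[U]$, because only the degree-$0$ contributions of $\ch^{\ast}$ and $\td^{\ast}$ survive the cap with the top-degree class $[U]$. Summing over $p$ yields the degree $2d$ component $\sum_{p}\binom{d}{p}y^p[U] = (1+y)^d [U]$ of $\tau_{\ast}(MHC_y[\rat^H_U])$ on $U$, which lifts to $(1+y)^d [X]_X$ on $X$ via the above isomorphism. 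Applying the twisted Todd transformation of \Cref{definition twisted todd transformation}, which divides the degree $2d$ contribution by $(1+y)^d$, will then yield $IT_{y, 2d}(X) = [X]_X$ for every admissible $y$; specializing to $y = 1$ gives the proposition. The main bookkeeping point to watch is the compatibility of $MHC_y$ and $\tau_{\ast}$ with the open restriction $j^{\ast}$; here both VRR formulas degenerate to strict naturality because the relative (co)tangent bundles of the \'etale map $j$ are trivial, so no correction factors appear.
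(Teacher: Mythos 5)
Your proof is correct, and it takes a genuinely different route from the proof given in the paper's proof environment; it is, however, precisely the alternative proof that the paper itself sketches in \Cref{remark alternative proof fundamental class}. Your argument proceeds by direct computation: restrict to the smooth dense open $U\subset X$, note that $j^{\ast}_{\BM}$ is an isomorphism in the top Borel--Moore degree, observe that both $MHC_y$ and $\tau_{\ast}$ commute strictly with the open restriction (the VRR correction terms from \Cref{prop.mhcvrrsmpullb} and \Cref{prop.bfmvrrsmpullb} being trivial for an \'etale map), and then compute the top component of $\tau_{\ast}\sum_p[\Omega^p_U]y^p$ to be $(1+y)^d[U]$, which the twisted Todd renormalization in \Cref{definition twisted todd transformation} divides back to $[U]$. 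The paper's primary proof is quite different in flavor: it intersects $X\subset\mathbb{P}^m$ with a generic codimension-$d$ linear subspace $H$, shows the resulting inclusion $g\colon Y=H\cap X\hookrightarrow X$ of a finite point set is upwardly normally nonsingular, invokes the full Verdier--Riemann--Roch theorem for $IT_{1,\ast}$ (\Cref{theorem Verdier-Riemann-Roch type formula}) to compare $g^!_{\alg}IT_{1,\ast}(X)$ with $IT_{1,\ast}(Y)$, and then identifies the coefficient $r$ by applying the augmentation and using that the genus of $IT_{1,\ast}$ on a finite set of points equals its cardinality. Your approach is more elementary and self-contained (it does not rely on the heavy machinery of \Cref{theorem Verdier-Riemann-Roch type formula} nor on the signature genus identification), and it yields the normalization uniformly in $y$ without having to inspect which specializations the VRR theorem covers; the paper's primary proof, on the other hand, serves to illustrate the Gysin restriction formula that is the backbone of the whole article, and requires only the evaluation of genera at a point rather than a filtered de Rham computation on the regular locus. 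One small cosmetic remark: where you cite ``the standard formula for the smooth case recorded earlier in the excerpt'' to justify $MHC_y[\rat^H_U]=\lambda_y(T^{\ast}U)\cap[\oO_U]$, the excerpt only records the identity $\lambda_y(T^{\ast}U)\cap[\oO_U]=\sum_p[\Omega^p_U]y^p$; the equality $MHC_y[\rat^H_U]=\sum_p[\Omega^p_U]y^p$ is a normalization of the motivic Hodge Chern class transformation that is standard (it follows from the definition via the filtered de Rham complex on a smooth variety) but should be cited to \cite{bsy} or \cite{schuermannmsri} rather than to the excerpt itself.
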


\begin{proof}
Since $X$ is compact and irreducible, we know that $H_{2d}(X; \mathbb{Q})$ is generated by the fundamental class $[X]_{X}$, and that $H_{i}(X; \mathbb{Q}) = 0$ for $i > 2d$.
Therefore, writing $IT_{1, 2d}(X) = r \cdot [X]_{X}$ for some $r \in \mathbb{Q}$, it remains to show that $r = 1$.
For this purpose, we fix an embedding $X \subset \mathbb{P}^{m}$ and a complex algebraic Whitney stratification on $X$ (see \Cref{example complex algebraic Whitney stratifications are algebraic stratifications}).
By applying a topological version of Kleiman's transversality theorem (see e.g. \cite[p. 39, Theorem 1.3.6 and Example 1.3.7]{gmsmt}), we find a generic linear subspace $H \subset \mathbb{P}^{m}$ of complex codimension $d$ that is transverse to all strata of $X$.
Then, the intersection $Y := H \cap X$ is a pure $0$-dimensional closed subvariety of a smooth Zariski open subvariety $U \subset X$.
Therefore, the closed regular embedding $g \colon Y \hookrightarrow X$ is tight (see \Cref{definition tight embedding}).
Moreover, the inclusion $E \subset \operatorname{Bl}_{Y \times 0}(X \times \mathbb{C})$ of the smooth exceptional divisor $E$ factorizes over the smooth variety $\operatorname{Bl}_{Y \times 0}(U \times \mathbb{C})$, and is therefore topologically normally nonsingular.
Consequently, the embedding $g \colon Y \hookrightarrow X$ is upwardly normally nonsingular (see \Cref{definition upwardly normally nonsingular}).
Thus, \Cref{theorem Verdier-Riemann-Roch type formula} states that the algebraic Gysin map $g^{!}_{\alg} \colon H_{2 \ast}(X; \mathbb{Q}) \rightarrow H_{2\ast-2d}(Y; \mathbb{Q})$ associated to the upwardly normally nonsingular embedding $g \colon Y \hookrightarrow X$ with underlying topological normal bundle $\nu$ satisfies
$$
g^{!}_{\alg}IT_{1, \ast}(X) = L^{\ast}(\nu) \cap IT_{1, \ast}(Y).
$$
Since the compact subvariety $Y \subset \mathbb{P}^{m}$ has pure dimension $0$, it consists of a finite number $k > 0$ of points.
(By construction, $k$ is the degree of the embedding $X \hookrightarrow \mathbb{P}^{m}$, and is hence positive.)
As any vector bundle over a one-point space is trivial, we have  $L^{\ast}(\nu) = 1$.
Thus, $g^{!}_{\alg}IT_{1, \ast}(X) = IT_{1, \ast}(Y)$.
Furthermore, we have $g^{!}_{\alg}[X]_{X} = [Y]_{Y}$ (see e.g. \cite[p. 100, Example 6.2.1]{fultonintth}, and use that the algebraic Gysin map of a closed regular embedding commutes with the cycle map from Chow to Borel-Moore homology according to Verdier \cite[p. 222, 9.2.1]{verdierintcompl}).
Altogether, we conclude that
$$
r \cdot [Y]_{Y} = g^{!}_{\alg}(r \cdot [X]_{X}) = g^{!}_{\alg}IT_{1, 2d}(X) = IT_{1, 0}(Y) \in H_{0}(Y; \mathbb{Q}).
$$
By applying the augmentation $\varepsilon_{\ast} \colon H_{0}(Y; \mathbb{Q}) \rightarrow \mathbb{Q}$ and using that $Y$ is smooth, we obtain
$$
r \cdot k = r \cdot \varepsilon_{\ast}\cdot [Y]_{Y} = \varepsilon_{\ast} IT_{1, 0}(Y) = \varepsilon_{\ast} T_{1, 0}(Y) = \varepsilon_{\ast} L_{ 0}(Y) = \sigma(Y) = k.
$$
Since $k > 0$, we conclude that $r = 1$.
\end{proof}

\begin{remark}\label{remark normalization for it0}
The statement and proof of \Cref{prop top component of it1} hold similarly for $IT_{0, \ast}(X)$.
In fact, \Cref{theorem Verdier-Riemann-Roch type formula} is also valid for $IT_{0, \ast}$ with correction factor $T_{0}^{\ast}(N_{M}W) = \td^{\ast}(N_{M}W)$ instead of $T_{1}^{\ast}(\nu_f) = L^{\ast}(\nu_f)$ as an inspection of the proof of Theorem 6.30 in \cite{banagllgysin} shows.
(In fact, the assumption $y = 1$ is only used in the proof of Proposition 6.26 in \cite{banagllgysin} to conclude that $T_{1}^{\ast}(1_{X}) = L^{\ast}(1_{X}) = 1 \in H^{\ast}(X; \mathbb{Q})$ for the trivial line bundle $1_{X}$ on a complex algebraic variety $X$, but we also have $T_{0}^{\ast}(1_{X}) = \td^{\ast}(1_{X}) = 1 \in H^{\ast}(X; \mathbb{Q})$.)
\end{remark}

\begin{remark}\label{remark alternative proof fundamental class}
For an alternative proof of \Cref{prop top component of it1}, we note that $IT_{y, 2d}(X) = IT_{y, 2d}(X_{\operatorname{reg}})$ under the identification $H_{2d}(X;\mathbb{Q})=H^{\BM}_{2d}(X_{\operatorname{reg}}; \mathbb{Q})$, where $X_{\operatorname{reg}}$ denotes the open smooth part of $X$.
For the manifold $M = X_{\operatorname{reg}}$, we observe $IT_{y, \ast}(M)=T_{y}^{\ast}(TM) \cap [M]$ with $T_{y}^{0}(TM)=1$.
\end{remark}

In \Cref{proposition it class is l type characteristic class} below, let $\mathcal{X}_{CM}$ denote the family of all inclusions $i \colon X \rightarrow W$ of compact irreducible subvarieties $X$ in smooth varieties $W$ such that $X$ is Cohen-Macaulay.
We note that $\mathcal{X}_{CM}$ has all the required properties.
In fact, all inclusions of Schubert subvarieties in Grassmannians are contained in $\mathcal{X}_{CM}$ because Schubert varieties are Cohen-Macaulay.
Furthermore, the family $\mathcal{X}_{CM}$ is stable under products (because the product of two Cohen-Macaulay schemes is again Cohen-Macaulay, see e.g. the proof of the Lemma in \cite[p. 108]{fp}), isomorphisms of smooth ambient spaces, and inclusions into larger smooth ambient spaces.

\begin{thm}\label{proposition it class is l type characteristic class}
The pair $\mathcal{L} = (\mathcal{L}^{\ast}, \mathcal{L}_{\ast})$ defined by $\mathcal{L}^{\ast}(f) = L^{\ast}(\nu_{f})$ for every inclusion $f \colon M \rightarrow W$ of a smooth closed subvariety $M \subset W$ in a smooth variety $W$ with normal bundle $\nu_{f}$, and by $\mathcal{L}_{\ast}(i) = i_{\ast} IT_{1, \ast}(X)$ for every inclusion $i \colon X \rightarrow W$ of a compact possibly singular subvariety $X \subset W$ in a smooth variety $W$ is a Gysin coherent characteristic class with respect to $\mathcal{X}_{CM}$.
\end{thm}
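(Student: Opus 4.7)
The plan is to check the four axioms of \Cref{definition gysin coherent characteristic classes} together with the normalization conditions, with the Gysin restriction axiom carrying essentially all the weight and the other axioms being direct consequences of results already in place. The normalization $\mathcal{L}^{0}(f) = 1$ is built into the definition of $L^{\ast}$, while for irreducible $X$ the normalization $\mathcal{L}_{2d}(i) = [X]_{W}$ follows from \Cref{prop top component of it1} and the identity $i_{\ast}[X]_{X} = [X]_{W}$; by \Cref{remark top class only for irreducible varieties} this suffices. Multiplicativity is immediate from \Cref{corollary multiplicativity of intersection generalized todd class} combined with compatibility of the cross product with pushforward. Isomorphism invariance of $\mathcal{L}^{\ast}$ reduces to naturality of $L^{\ast}$ under pullback of vector bundles, and that of $\mathcal{L}_{\ast}$ to \Cref{prop isomorphism invariance of of intersection generalized todd class}. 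Naturality follows from $f_{\ast}i^{M}_{\ast} = (f \circ i^{M})_{\ast} = i_{\ast}$.

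The core content is the Gysin restriction axiom, for which I would define $\mathcal{X}_{CM}$-transversality of closed irreducible subvarieties $Z, Z' \subset W$ in a smooth variety $W$ to mean that $Z$ and $Z'$ are Tor-independent in $W$, generically transverse in $W$, and admit complex algebraic Whitney stratifications whose strata are mutually transverse as smooth submanifolds of $W$. Symmetry, the proper intersection property (via generic transversality together with \cite[Cor.~3.4]{bw}), and locality under restriction to Zariski open subsets are then clear. The Kleiman-Bertini analog for the $GL_{n}(\mathbb{C})$-action on a Grassmannian is obtained by invoking the classical Kleiman theorem for stratification transversality and Sierra's extension \cite{sierra} for Tor-independence; the hypothesis that inclusions in $\mathcal{X}_{CM}$ involve Cohen-Macaulay subvarieties is precisely what makes Sierra's result applicable.

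Given $i \colon X \to W$ in $\mathcal{X}_{CM}$ and $f \colon M \to W$ with $M$ smooth irreducible and $X$, $M$ being $\mathcal{X}_{CM}$-transverse in $W$, write $Y = M \cap X$, $g \colon Y \hookrightarrow X$, and $j \colon Y \hookrightarrow M$. The hypotheses of \Cref{theorem gysin restriction of hodge theoretic cc} are satisfied, so
\[
g^{!}_{\alg} IT_{1, \ast}(X) = L^{\ast}(\nu_{g}) \cap IT_{1, \ast}(Y), \qquad \nu_{g} \cong j^{\ast}\nu_{f},
\]
where the identification of normal bundles comes from transversality. The class $IT_{1, \ast}(X)$ is algebraic (\Cref{rem.itisalgebraic}), so \Cref{alg and top gysin} lets me replace $g^{!}_{\alg}$ by the topological Borel-Moore Gysin map on this class, and \Cref{top gysin borel moore and singular coincide} combined with compactness of $X$ and $Y$ then yields the same formula with $g^{!}_{\operatorname{top}}$ in singular homology. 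Base change for topological Gysin maps associated to the transverse intersection (obtained by restricting to a tubular neighborhood of $M$ in $W$, which pulls back under $i$ to a tubular neighborhood of $Y$ in $X$ by \Cref{normally nonsingular inclusions induced by transverse intersections} applied to the Whitney stratification of $X$ transverse to $M$) gives $f^{!}_{\operatorname{top}} i_{\ast} = j_{\ast} g^{!}_{\operatorname{top}}$, and a final application of the projection formula produces
\[
f^{!}_{\operatorname{top}} \mathcal{L}_{\ast}(i) = j_{\ast}\bigl(j^{\ast}L^{\ast}(\nu_{f}) \cap IT_{1, \ast}(Y)\bigr) = L^{\ast}(\nu_{f}) \cap j_{\ast}IT_{1, \ast}(Y) = \mathcal{L}^{\ast}(f) \cap \mathcal{L}_{\ast}(j).
\]

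The main obstacle is verifying that the proposed notion of $\mathcal{X}_{CM}$-transversality genuinely satisfies the Kleiman-Bertini analog on Grassmannians; this rests on combining classical Kleiman-Bertini with Sierra's Tor-independence version, which is available only because of the Cohen-Macaulay hypothesis in $\mathcal{X}_{CM}$ and thereby forces the restriction from $\mathcal{X}_{0}$ to $\mathcal{X}_{CM}$ in the statement. A secondary but non-trivial technical issue is the passage between the algebraic and topological Gysin maps, which requires the chain \Cref{rem.itisalgebraic} $\Rightarrow$ \Cref{alg and top gysin} $\Rightarrow$ \Cref{top gysin borel moore and singular coincide}, together with the topological base change identity $f^{!}_{\operatorname{top}} i_{\ast} = j_{\ast} g^{!}_{\operatorname{top}}$ in singular homology.
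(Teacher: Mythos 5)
Your proposal is correct and follows essentially the same route as the paper: the axioms (1)--(3) and the normalizations are handled identically (\Cref{prop top component of it1}, \Cref{corollary multiplicativity of intersection generalized todd class}, \Cref{prop isomorphism invariance of of intersection generalized todd class}), and the Gysin axiom is verified with the same notion of $\mathcal{X}_{CM}$-transversality and the same chain \Cref{theorem gysin restriction of hodge theoretic cc} $\Rightarrow$ \Cref{rem.itisalgebraic} $\Rightarrow$ \Cref{alg and top gysin} $\Rightarrow$ \Cref{top gysin borel moore and singular coincide} $\Rightarrow$ base change $\Rightarrow$ projection formula. The only presentational difference is that the paper also explicitly invokes the Kleiman-type theorem of \cite[Theorem 3.5]{bw} for the generic-transversality clause (not just classical Whitney-stratum transversality and Sierra's Tor-independence result), a detail worth spelling out when you verify the Kleiman--Bertini analog for $\mathcal{X}_{CM}$-transversality.
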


\begin{proof}
By the properties of the cohomological Hirzebruch $L$-class, the class $\mathcal{L}^{\ast}(f) = L^{\ast}(\nu_{f}) \in H^{\ast}(M; \mathbb{Q})$ is normalized for all $f$.
Moreover, by \Cref{prop top component of it1}, the highest nontrivial homogeneous component of the class $IT_{1, \ast}(X) = IT_{1, 0}(X) + IT_{1, 1}(X) + \dots \in H_{\ast}(X; \mathbb{Q})$ is $IT_{1, 2d}(X) = [X]_{X}$, where $d$ denotes the complex dimension of $X$, and $X$ may be assumed to be irreducible by \Cref{remark top class only for irreducible varieties}.
Consequently, the highest nontrivial homogeneous component of $\mathcal{L}_{\ast}(i) = i_{\ast} IT_{1, \ast}(X) \in H_{\ast}(W; \mathbb{Q})$ is the ambient fundamental class $i_{\ast}IT_{1, 2d}(X) = i_{\ast}[X]_{X} = [X]_{W}$.

We proceed to check the axioms of Gysin coherent characteristic classes for the pair $\mathcal{L}$.
As for axiom (\ref{axiom multiplicativity}), we have $IT_{1, \ast}(X \times X') = IT_{1, \ast}(X) \times IT_{1, \ast}(X')$ in $H_{\ast}(X \times X'; \mathbb{Q})$ for all pure-dimensional compact complex algebraic varieties $X$ and $X'$ by \Cref{corollary multiplicativity of intersection generalized todd class}.
Hence, for every $i \colon X \rightarrow W$ and $i' \colon X' \rightarrow W'$, the claim follows by applying $(i \times i')_{\ast}$ and using naturality of the cross product:
\begin{align*}
\mathcal{L}_{\ast}(i \times i') &= (i \times i')_{\ast}IT_{1, \ast}(X \times X') = (i \times i')_{\ast}(IT_{1, \ast}(X) \times IT_{1, \ast}(X')) \\
&= i_{\ast}IT_{1, \ast}(X) \times i_{\ast}'IT_{1, \ast}(X') = \mathcal{L}_{\ast}(i) \times \mathcal{L}_{\ast}(i').
\end{align*}
Next, let us show that the pair $\mathcal{L}$ is compatible with ambient isomorphisms as stated in axiom (\ref{axiom isomorphism}).
As for $\mathcal{L}^{\ast}$, we consider $f \colon M \rightarrow W$ and $f' \colon M' \rightarrow W'$, and an isomorphism $W \stackrel{\cong}{\longrightarrow} W'$ that restricts to an isomorphism $\phi \colon M \stackrel{\cong}{\longrightarrow} M'$.
Then, we have $\phi^{\ast}\nu_{f'} = \nu_{f}$, and thus
$$
\phi^{\ast}\mathcal{L}^{\ast}(f') = \phi^{\ast}L^{\ast}(\nu_{f'}) = L^{\ast}(\phi^{\ast}\nu_{f'}) = L^{\ast}(\nu_{f}) = \mathcal{L}^{\ast}(f).
$$
As for $\mathcal{L}_{\ast}$, we consider $i \colon X \rightarrow W$ and $i' \colon X' \rightarrow W'$, and an isomorphism $\Phi \colon W \stackrel{\cong}{\longrightarrow} W'$ that restricts to an isomorphism $\Phi_{0} \colon X \stackrel{\cong}{\longrightarrow} X'$.
Then, we have $\Phi_{0 \ast}IT_{1, \ast}(X) = IT_{1, \ast}(X')$ by \Cref{prop isomorphism invariance of of intersection generalized todd class}.
Hence, we obtain
$$
\Phi_{\ast}\mathcal{L}_{\ast}(i) = \Phi_{\ast}i_{\ast}IT_{1, \ast}(X) = i_{\ast}'\Phi_{0\ast}IT_{1, \ast}(X) = i_{\ast}'IT_{1, \ast}(X') = \mathcal{L}_{\ast}(i').
$$
To verify axiom (\ref{axiom locality}), we consider $i \colon X \rightarrow W$ and $f \colon M \rightarrow W$ such that $X \subset M$.
Then, the inclusion $i^{M} := i| \colon X \rightarrow M$ satisfies $f \circ i^{M} = i$, and we obtain
$$
f_{\ast}\mathcal{L}_{\ast}(i^{M}) = f_{\ast}i^{M}_{\ast}IT_{1, \ast}(X) = i_{\ast}IT_{1, \ast}(X) = \mathcal{L}_{\ast}(i).
$$
Finally, to show axiom (\ref{axiom gysin}), let us call closed irreducible subvarieties $Z, Z' \subset W$ of a smooth variety $W$ \emph{$\mathcal{X}_{CM}$-transverse} if $Z$ and $Z'$ are simultaneously complex algebraic Whitney transverse (that is, they admit complex algebraic Whitney stratifications such that every stratum of $Z$ is transverse to every stratum of $Z'$ as smooth submanifolds of $W$), generically transverse (see e.g. \cite[Section 3]{bw}), and Tor-independent (see \Cref{definition tor independence}) in $W$.
This notion of $\mathcal{X}_{CM}$-transversality has indeed all required properties.
(In fact, properness of $\mathcal{X}_{CM}$-transverse intersections follows from generic transversality according to \cite[Corollary 3.4]{bw}.
Next, to obtain the desired analog of Kleiman's transversality theorem for the action of $GL_{n}(\mathbb{C})$ on the Grassmannians $G = G_{k}(\mathbb{C}^{n})$, we consider inclusions $X \rightarrow G$ and $X' \rightarrow G$ in $\mathcal{X}_{CM}$.
Then, we apply suitable versions of Kleiman's transversality theorem to obtain an open dense subset $U$ of $GL_{n}(\mathbb{C})$ (in the complex topology) such that $X$ is $\mathcal{X}_{CM}$-transverse to $g \cdot X'$ for all $g \in U$.
Here, Kleiman's transversality theorem holds for (complex algebraic) Whitney transversality by \cite[Theorem 2.2]{bw} since $X$ and $X'$ are compact, for generic transversality by \cite[Theorem 3.5]{bw} since $X$ and $X'$ are irreducible, and for Tor-independence by Sierra's general homological Kleiman-Bertini theorem \cite[Corollary 4.3, p. 608]{sierra} since $X$ and $X'$ are Cohen-Macaulay.
We also note that Zariski dense open subsets are also dense in the complex topology by \cite[Theorem 1, p. 58]{mumfordred}.
Finally, locality holds evidently for our notion of $\mathcal{X}_{CM}$-transversality.)
Now, consider an inclusion $i \colon X \rightarrow W$ in $\mathcal{X}_{CM}$ and an inclusion $f \colon M \rightarrow W$ (of a smooth closed subvariety $M \subset W$ in a smooth variety $W$) such that $M$ is irreducible, and $M$ and $X$ are $\mathcal{X}_{CM}$-transverse in $W$.
Then, \Cref{theorem gysin restriction of hodge theoretic cc} implies that the embedding $g \colon Y \hookrightarrow X$ of the compact subvariety $Y = X \cap M \subset X$ is tight, and the algebraic normal bundle $N = N_{Y}X$ of $g$ and the topological normal bundle $\nu$ of the topologically normally nonsingular inclusion underlying $g$ satisfy
$$
g^{!}_{\alg}IT_{1, \ast}(X) = L^{\ast}(N) \cap IT_{1, \ast}(Y) = L^{\ast}(\nu) \cap IT_{1, \ast}(Y),
$$
where $g^{!}_{\alg} \colon H_{\ast}(X; \mathbb{Q}) \rightarrow H_{\ast}(Y; \mathbb{Q})$ denotes the algebraic Gysin homomorphism associated to $g$ as constructed in Verdier \cite{verdierintcompl} (where we may use the natural identification of Borel-Moore homology and singular homology because $X$ and $Y$ are compact).
Furthermore, since $i \in \mathcal{X}_{CM}$ and $M$ and $X$ are $\mathcal{X}_{CM}$-transverse in $W$, \Cref{alg and top gysin} and \Cref{top gysin borel moore and singular coincide} imply that the algebraic Gysin map $g^{!}_{\alg} \colon H_{\ast}(X; \mathbb{Q}) \rightarrow H_{\ast}(Y; \mathbb{Q})$ coincides with the topological Gysin map $g^{!}_{\operatorname{top}} \colon H_{\ast}(X; \mathbb{Q}) \rightarrow H_{\ast}(Y; \mathbb{Q})$ on all fundamental classes $[Z]_{X}$ of closed irreducible subvarieties $Z \subset X$.
As $IT_{1, \ast}(X)$ is an algebraic cycle according to \Cref{rem.itisalgebraic}, we obtain
$$
g^{!}_{\operatorname{top}}IT_{1, \ast}(X) = g^{!}_{\alg}IT_{1, \ast}(X).
$$
Next, recall from \Cref{normally nonsingular inclusions induced by transverse intersections} that the inclusion $g \colon Y \hookrightarrow X$ is normally nonsingular with topological normal bundle $\nu = j^{\ast} \nu_{f}$ given by the restriction under the inclusion $j \colon Y \rightarrow M$ of the normal bundle $\nu_{f}$ of $M$ in $W$.
Using the base change $f^{!}_{\operatorname{top}}i_{\ast} = j_{\ast}g^{!}_{\operatorname{top}}$ for topological Gysin maps (see \cite[Proposition 2.4]{bw}), as well as $L^{\ast}(\nu) = L^{\ast}(j^{\ast} \nu_{f}) = j^{\ast} L^{\ast}(\nu_{f})$, we conclude that
\begin{align*}
f^{!}_{\operatorname{top}} \mathcal{L}_{\ast}(i) &= f^{!}_{\operatorname{top}} i_{\ast}IT_{1, \ast}(X) = j_{\ast}g^{!}_{\operatorname{top}} IT_{1, \ast}(X) = j_{\ast}g^{!}_{\alg} IT_{1, \ast}(X) = j_{\ast}(L^{\ast}(\nu) \cap IT_{1, \ast}(Y)) \\
&= j_{\ast}(j^{\ast} L^{\ast}(\nu_{f}) \cap IT_{1, \ast}(Y)) = L^{\ast}(\nu_{f}) \cap j_{\ast}IT_{1, \ast}(Y) = \mathcal{L}^{\ast}(f) \cap \mathcal{L}_{\ast}(j).
\end{align*}
\end{proof}

\begin{remark}\label{remark analogy for it0}
A similar proof shows that the pair $(c\ell^{\ast}, c\ell_{\ast})$ given by $c\ell^{\ast}(f \colon M \hookrightarrow W) = \td^*(N_{M}W)$ and $c\ell_{\ast}(i \colon X \hookrightarrow W) = i_{\ast}IT_{0,\ast}(X)$ is also an example of a Gysin coherent characteristic class with respect to $\mathcal{X}_{CM}$.
In fact, \Cref{theorem Verdier-Riemann-Roch type formula} (and consequently, \Cref{theorem gysin restriction of hodge theoretic cc}) is also valid for $IT_{0, \ast}$ with correction factor $T_{0}^{\ast}(N_{M}W) = \td^{\ast}(N_{M}W)$ instead of $T_{1}^{\ast}(\nu_f) = L^{\ast}(\nu_f)$ as an inspection of the proof of Theorem 6.30 in \cite{banagllgysin} shows (compare \Cref{remark normalization for it0}).
See \Cref{remark analogy for ichern} for $IT_{-1,\ast}(X)$.

When $X$ is singular, the intersection Todd class $IT_{0 \ast}(X)$ is generally different from the Baum-Fulton-MacPherson Todd class $\td_{\ast}(X)$ studied in \Cref{example todd class}.
Let us consider the normalization $\pi \colon X' \rightarrow X$ of a singular curve $X$ such that $\pi$ is a homeomorphism.
Let $T_{0 \ast} = \tau_{\ast} \circ mC_{0}$, where $mC_{0} \colon K_{0}(\operatorname{var}/X) \rightarrow K_{0}^{\alg}(X)$ is the evaluation at $y=0$ of the motivic Chern class transformation $mC_{y}$ of \cite{bsy}, recalled here in \Cref{remark motivic chern class trafo}.
Following \cite[Example 3.1]{bsy}, let us show that $T_{0 \ast}(X) \neq \td_{\ast}(X)$.
The normalization $\pi$ restricts to an isomorphism of regular parts.
It is also an isomorphism of the singular parts since those are finite sets of points and the homeomorphism $\pi$ restricts to a bijection between them.
Thus, the scissor relation in the motivic group $K_{0}(\operatorname{var}/X)$ yields $\pi_{\ast}[\operatorname{id}_{X'}] = [\operatorname{id}_{X}]$.
Then, as the smoothness of $X'$ implies $mC_{0} [\operatorname{id}_{X'}] = [\mathcal{O}_{X'}]$ (Du Bois would suffice), we find
\begin{align*}
T_{0 \ast}(X) &:= T_{0 \ast}[\operatorname{id}_{X}] = T_{0 \ast}\pi_{\ast}[\operatorname{id}_{X'}] = \pi_{\ast}T_{0\ast} [\operatorname{id}_{X'}] = \pi_{\ast}\tau_{\ast} mC_{0} [\operatorname{id}_{X'}] = \pi_{\ast} \tau_{\ast} [\mathcal{O}_{X'}].
\end{align*}
As $X$ is singular,
$$
\pi_{\ast}[\mathcal{O}_{X'}] = [\mathcal{O}_{X}] + n \cdot [\mathcal{O}_{\operatorname{pt}}], \qquad n > 0.
$$
Hence,
\begin{align*}
T_{0 \ast}(X) & = \pi_{\ast} \tau_{\ast} [\mathcal{O}_{X'}] =  \tau_{\ast}\pi_{\ast} [\mathcal{O}_{X'}] = \tau_{\ast}([\mathcal{O}_{X}] + n \cdot [\mathcal{O}_{\operatorname{pt}}]) = \td_{\ast}(X) + n \cdot [\operatorname{pt}] \neq \td_{\ast}(X).
\end{align*}
On the other hand, since $\pi$ is a resolution of singularities, $IC_{X}^{H}$ is a direct summand of $\pi_{\ast}\mathbb{Q}_{X'}^{H}[1]$ (see \cite[Corollary 4.6]{schuermannmsri}).
As $\pi$ is in fact a small resolution, we have $\pi_{\ast}(\mathbb{Q}_{X'}^{H}) = IC_{X}^{H}[-1]$. Moreover, in view of \Cref{remark motivic chern class trafo}, we have $[\mathcal{O}_{X'}] = mC_{0} [\operatorname{id}_{X'}] = MHC_{0\ast} \chi_{Hdg} [\operatorname{id}_{X'}] = MHC_{0\ast}[\mathbb{Q}_{X'}^{H}]$.
All in all, we obtain
\begin{align*}
T_{0 \ast}(X) &= \pi_{\ast}\tau_{\ast} [\mathcal{O}_{X'}] = \pi_{\ast}\tau_{\ast}MHC_{0\ast}[\mathbb{Q}_{X'}^{H}] = \pi_{\ast} MHT_{0 \ast}[\mathbb{Q}_{X'}^{H}] = MHT_{0 \ast}[\pi_{\ast} \mathbb{Q}_{X'}^{H}] \\
&= MHT_{0 \ast}[IC_{X}^{H}[-1]] = IT_{0 \ast}(X).
\end{align*}
Therefore, $IT_{0 \ast}(X) = T_{0 \ast}(X) \neq \td_{\ast}(X)$.
(A similar example shows that $IT_{0 \ast}(X) \neq T_{0 \ast}(X)$ in general.)
\end{remark}

Recall from \Cref{example l class as gysin coherent cc} that the pair $(c\ell^{\ast}, c\ell_{\ast})$ given by $c\ell^{\ast}(f \colon M \hookrightarrow W) = L^{\ast}(\nu_{M \subset W})$ and $c\ell_{\ast}(i \colon X \hookrightarrow W) = i_{\ast}L_{\ast}(X)$, where $L_{\ast}$ is the Goresky-MacPherson $L$-class, forms a Gysin coherent characteristic class, and hence a Gysin coherent characteristic class with respect to $\mathcal{X}_{CM}$ as defined at the beginning of this section.
Since the $L$-genus, i.e. the signature, agrees with the genus of $IT_{1 \ast}$ on complex projective algebraic varieties by Saito's intersection cohomology Hodge index theorem (see \cite{saito88}, \cite[Section 3.6]{mss}), the uniqueness theorem for Gysin coherent characteristic classes (\Cref{main result on Gysin coherent characteristic classes with respect to x}) implies

\begin{thm}\label{main result conjecture}
We have $i_{\ast}L_{\ast}(X) = i_{\ast}IT_{1, \ast}(X)$ for all inclusions $i \colon X \rightarrow G$ of compact irreducible Cohen-Macaulay subvarieties in ambient Grassmannians.
\end{thm}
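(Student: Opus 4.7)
The plan is to invoke the uniqueness theorem for Gysin coherent characteristic classes (\Cref{main result on Gysin coherent characteristic classes with respect to x}) applied to the two classes built from $L_{\ast}$ and from $IT_{1,\ast}$, both taken with respect to the family $\mathcal{X}_{CM}$. Concretely, I would assemble the following two Gysin coherent pairs with respect to $\mathcal{X}_{CM}$ and verify that they satisfy the hypotheses of the uniqueness theorem.

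First, let $c\ell = (c\ell^{\ast}, c\ell_{\ast})$ be given by $c\ell^{\ast}(f) = L^{\ast}(\nu_{f})$ and $c\ell_{\ast}(i) = i_{\ast}L_{\ast}(X)$. By \Cref{example l class as gysin coherent cc}, $c\ell$ is a Gysin coherent characteristic class with respect to the full family $\mathcal{X}_{0}$; since $\mathcal{X}_{CM} \subset \mathcal{X}_{0}$, the class $c\ell$ is in particular Gysin coherent with respect to $\mathcal{X}_{CM}$ (any notion of $\mathcal{X}_{0}$-transversality restricts to a valid notion of $\mathcal{X}_{CM}$-transversality, as the locality and Kleiman-type properties continue to hold). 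Second, let $\widetilde{c\ell} = \mathcal{L} = (\mathcal{L}^{\ast}, \mathcal{L}_{\ast})$ be the pair defined in \Cref{proposition it class is l type characteristic class}, which is Gysin coherent with respect to $\mathcal{X}_{CM}$ by that theorem.

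Next, I would check the two hypotheses of \Cref{main result on Gysin coherent characteristic classes with respect to x}. The cohomological parts agree tautologically, since $c\ell^{\ast}(f) = L^{\ast}(\nu_{f}) = \mathcal{L}^{\ast}(f)$ for every smooth closed inclusion $f$. For the associated genera, I would recall that $|c\ell_{\ast}|$ is the signature $\sigma(X)$ of the Goresky-MacPherson intersection form on middle-perversity intersection homology (see \Cref{example l class as gysin coherent cc}), and that $|\mathcal{L}_{\ast}|$ is the degree of $IT_{1,\ast}(X)$ for compact irreducible complex projective algebraic $X$. Saito's intersection cohomology Hodge index theorem (\cite{saito88}, cf. \cite[Section 3.6]{mss}) asserts exactly the equality $\sigma(X) = \deg IT_{1, \ast}(X)$ for such $X$, so $|c\ell_{\ast}| = |\widetilde{c\ell}_{\ast}|$.

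With both hypotheses verified, \Cref{main result on Gysin coherent characteristic classes with respect to x} yields $c\ell_{\ast}(i) = \widetilde{c\ell}_{\ast}(i)$ for every inclusion $i \colon X \hookrightarrow G$ in $\mathcal{X}_{CM}$ of a compact irreducible subvariety in a Grassmannian, which is precisely $i_{\ast}L_{\ast}(X) = i_{\ast}IT_{1,\ast}(X)$. The only non-routine step is the appeal to Saito's Hodge index theorem to identify the two genera; all the structural verifications needed for the two pairs to be Gysin coherent with respect to $\mathcal{X}_{CM}$ are already carried out in \Cref{example l class as gysin coherent cc} and \Cref{proposition it class is l type characteristic class}.
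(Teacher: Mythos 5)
Your proof is correct and follows exactly the paper's argument: both rely on verifying that the $L$-class pair and the $\mathcal{L} = (L^{\ast}(\nu_{f}), i_{\ast}IT_{1,\ast})$ pair are Gysin coherent with respect to $\mathcal{X}_{CM}$, matching the cohomological parts tautologically and the genera via Saito's intersection cohomology Hodge index theorem, and then invoking \Cref{main result on Gysin coherent characteristic classes with respect to x}. The only difference is that you spell out why Gysin coherence with respect to $\mathcal{X}_{0}$ implies Gysin coherence with respect to the subfamily $\mathcal{X}_{CM}$, which the paper leaves implicit.
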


Since Schubert varieties in a Grassmannian are Cohen-Macaulay, and their homology injects into the homology of the ambient Grassmannian, we obtain

\begin{cor}\label{conjecture for schubert holds}
We have $L_{\ast}(X) = IT_{1, \ast}(X)$ for all Schubert varieties $X$ in a Grassmannian.
\end{cor}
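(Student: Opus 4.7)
The plan is to deduce the corollary directly from \Cref{main result conjecture} by verifying the two hypotheses on Schubert varieties needed to invoke it, and then by promoting the ambient equality to an intrinsic one via an injectivity argument.

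First, I would observe that every Schubert variety $X$ in a Grassmannian $G$ is a compact irreducible projective subvariety, and that Schubert varieties are Cohen-Macaulay (a classical result, valid more generally for Schubert varieties in any generalized flag variety $G/P$). Hence the inclusion $i \colon X \hookrightarrow G$ lies in the family $\mathcal{X}_{CM}$, so \Cref{main result conjecture} applies and yields
\[
i_{\ast} L_{\ast}(X) = i_{\ast} IT_{1,\ast}(X) \quad \text{in } H_{\ast}(G;\mathbb{Q}).
\]

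Next, I would argue that $i_{\ast} \colon H_{\ast}(X;\mathbb{Q}) \to H_{\ast}(G;\mathbb{Q})$ is injective. The Grassmannian $G$ admits the Bruhat cell decomposition whose closed cells are exactly the Schubert varieties of $G$, and the corresponding fundamental classes form a $\mathbb{Q}$-basis of $H_{\ast}(G;\mathbb{Q})$. The Schubert variety $X$ itself inherits a cell decomposition from Bruhat: its closed cells are precisely those Schubert subvarieties of $G$ contained in $X$, and their fundamental classes form a basis of $H_{\ast}(X;\mathbb{Q})$. The map $i_{\ast}$ sends this basis injectively into a subset of the Schubert basis of $H_{\ast}(G;\mathbb{Q})$, and is therefore injective.

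Combining these two facts, the ambient equality $i_{\ast}L_{\ast}(X) = i_{\ast}IT_{1,\ast}(X)$ forces
\[
L_{\ast}(X) = IT_{1,\ast}(X) \quad \text{in } H_{\ast}(X;\mathbb{Q}),
\]
as desired. The only genuinely nontrivial ingredient here is \Cref{main result conjecture} itself; everything at this stage is either a classical structural fact about Schubert varieties (Cohen-Macaulayness, cellular Bruhat decomposition) or formal linear algebra. Thus there is no real obstacle to the corollary once the main theorem is in hand; the corollary is essentially a two-line consequence, with the only subtlety being the verification that the cellular decomposition of $X$ is compatible with that of $G$ so as to make $i_{\ast}$ inject Schubert bases into Schubert bases.
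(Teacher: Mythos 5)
Your proposal is correct and follows exactly the argument the paper gives (Cohen-Macaulayness of Schubert varieties to invoke \Cref{main result conjecture}, plus injectivity of $i_{\ast}$ from the Bruhat cell structure to upgrade the ambient identity to an intrinsic one). You merely spell out the injectivity reasoning that the paper states in one clause.
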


\section{The Chern Class as a Gysin Coherent Characteristic Class}\label{example chern class}
Let $c_{\ast} \colon F(X) \longrightarrow H^\BM_{2*} (X)\otimes \rat$ denote the (rationalized) Chern class transformation of MacPherson \cite{macph} defined on the group $F(X)$ of $\mathbb{Z}$-valued algebraically constructible functions on the possibly singular complex algebraic variety $X$.
(Such a constructible function is a linear combination of indicator functions $1_{Z}$ with $Z \subset X$ a closed irreducible subvariety.)
The Chern-Schwartz-MacPherson class $c_{\ast}(X)$ of a possibly singular irreducible complex algebraic variety $X$ is defined as
$$
c_{\ast}(X) := c_{\ast}(1_{X}) \in H_{2\ast}^{BM}(X; \mathbb{Q}).
$$
While the Chern classes $c_{\ast}(X)$ were originally defined with integral coefficients, we need to consider them with rational coefficients to be able to study them in the framework of Gysin coherence.

In \cite{schuertrans}, the second author derives the following Verdier-Riemann-Roch type theorem for the behavior of the Chow homology Chern class transformation $c_{\ast} \colon F(X) \rightarrow A_{\ast}(X)$ under refined Gysin maps associated to transverse intersections in a microlocal context.

\begin{thm}[see Corollary 2.7 in \cite{schuertrans}]\label{theorem vrr chern}
Let $f \colon M \rightarrow W$ be a closed embedding of smooth complex varieties with algebraic normal bundle $N = N_{M}W$.
Let $X \subset W$ be a (Zariski) closed subspace, and set $Y := f^{-1}(X) = M \cap X \subset M$.
Assume that $\gamma \in F(X)$ is a constructible function such that $f$ is non-characteristic with respect to the support $\operatorname{supp}(CC(\gamma))$ of the characteristic cycle $CC(\gamma)$ of $\gamma$.
Then,
$$
(f, i)^{!}_{\operatorname{ref}}(c_{\ast}(\gamma)) = c^{\ast}(N|_{Y}) \cap c_{\ast}(g^{\ast}(\gamma)) \in A_{\ast}(Y),
$$
where $(f, i)^{!}_{\operatorname{ref}} \colon A_{\ast}(X) \rightarrow A_{\ast}(Y)$ denotes the refined Gysin map associated to the cartesian square of closed embeddings
\[ \xymatrix{
Y := M\cap X \ar[d] \ar^-{g}[r] &
  X \ar[d]^{i} \\
M \ar[r]_f & W
} \]
(see Fulton \cite[Section 6.2]{fultonintth}).
\end{thm}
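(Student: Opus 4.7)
The plan is to factor the refined Gysin map $(f,i)^!_{\operatorname{ref}}$ through specialization to the normal cone, then invoke the Verdier--Riemann--Roch formula for the Chern class transformation under smooth (bundle) pullback, using the non-characteristic hypothesis to guarantee that the constructible function specializes compatibly with its characteristic cycle.

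First, following Fulton's construction \cite{fultonintth}, I would realize $(f,i)^!_{\operatorname{ref}}$ via the deformation space $\mathcal{D} = \operatorname{Bl}_{M\times 0}(W\times \aff^1) \setminus \widetilde{W\times 0}$, which is flat over $\aff^1$ with generic fiber $W$ and special fiber $N = N_MW$. The subvariety $X \subset W$ deforms inside $\mathcal{D}$ to a family whose special fiber is the normal cone $C_YX \subset N|_Y$. This identifies $(f,i)^!_{\operatorname{ref}}(\alpha) = s^{!}_{N|_Y}\, \sigma_{X}(\alpha)$, where $\sigma_X \colon A_*(X) \to A_*(C_YX)$ is the specialization map and $s^{!}_{N|_Y}$ is the Gysin map for the zero section of the bundle $p \colon N|_Y \to Y$, which inverts flat pullback $p^*$.

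Next, I would lift this picture to the level of constructible functions. Under the non-characteristic assumption, $\operatorname{supp} CC(\gamma) \cap T^*_MW \subset T^*_WW$, which is precisely the microlocal condition ensuring that the specialization $\sigma_f(\gamma) \in F(C_YX)$ is well-defined and agrees with the pullback of $g^*(\gamma)$ under the composition $C_YX \hookrightarrow N|_Y \xrightarrow{p} Y$, i.e.\ $\sigma_f(\gamma) = (p \circ \iota)^*(g^*\gamma)$, where $\iota \colon C_YX \hookrightarrow N|_Y$ is the closed embedding. Equivalently, in $F(N|_Y)$ extended by zero, $\sigma_f(\gamma)$ corresponds to $p^*(g^*\gamma)$ restricted to $C_YX$. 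Compatibility of $c_*$ with specialization (a naturality/properness argument applied to the total space $\mathcal{D}$) then yields $\sigma_X(c_*(\gamma)) = c_*(\sigma_f(\gamma))$ in $A_*(C_YX)$, realized inside $A_*(N|_Y)$.

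Now I would apply the Verdier--Riemann--Roch formula for the Chern class transformation under smooth morphisms (proved by Yokura, building on Verdier's argument): for the smooth map $p \colon N|_Y \to Y$ with relative tangent bundle $T_p = p^*(N|_Y)$,
$$
c_*(p^*(g^*\gamma)) = c^*(p^*N|_Y) \cap p^*\, c_*(g^*\gamma) \quad \in A_*(N|_Y).
$$
Applying $s^{!}_{N|_Y}$ and using $s^{!}_{N|_Y} \circ p^* = \operatorname{id}$ together with the self-intersection formula $s^{!}_{N|_Y}(p^*\xi \cap p^*\eta) = \xi \cap \eta$ at the level of the top Chern class, the refined Gysin output equals $c^*(N|_Y) \cap c_*(g^*\gamma)$, which is the desired formula.

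The main obstacle is the second step: making the specialization of constructible functions and their characteristic cycles precise, and proving that $\sigma_X \circ c_* = c_* \circ \sigma_f$ under the non-characteristic hypothesis. This requires a careful microlocal argument, since the characteristic cycle $CC(\gamma)$ must be cut transversely by the conormal variety of $M$ in $W$ (which is exactly the non-characteristic condition), so that its image under the natural map $T^*W|_M \to T^*M$ is well-defined and equals $CC(g^*\gamma)$. Once this microlocal compatibility is established, the rest is a formal concatenation of the specialization calculus of Fulton with the smooth-pullback Verdier--Riemann--Roch for $c_*$.
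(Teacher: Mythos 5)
The paper does not prove this theorem; it is quoted as a black box from Corollary~2.7 of the cited reference \cite{schuertrans}, so there is no internal proof to compare against. In \cite{schuertrans} the statement is deduced from the microlocal intersection formula for characteristic cycles (computing the refined Gysin map directly in terms of the proper intersection of the Lagrangian cycle $CC(\gamma)$ with the conormal space $T^*_MW$ inside $T^*W$, combined with the Ginzburg--Sabbah index identification of $c_*$ with the characteristic-cycle map). Your proposal instead goes through the deformation to the normal cone and Verdier specialization of the constructible function itself, which is a genuinely different (and in principle viable) route.

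The formal skeleton of your argument is correct: the factorization $(f,i)^{!}_{\operatorname{ref}} = s^{!}_{N|_Y}\circ\iota_*\circ\sigma_X$, the compatibility $\sigma_X\circ c_* = c_*\circ\sigma_f$ (a proper pushforward argument over the deformation space), the smooth-pullback VRR for the bundle projection $p$, and the identity $s^{!}_{N|_Y}\circ p^* = \operatorname{id}$ all concatenate to give the stated formula \emph{provided} $\iota_*\sigma_f(\gamma) = p^*(g^*\gamma)$ in $F(N|_Y)$. That last equality is the genuine content, and it is exactly where your sketch has a gap. You correctly flag it as ``the main obstacle,'' but the paragraph you offer in lieu of a proof actually states a different fact (functoriality of $CC$ under non-characteristic pullback, i.e.\ that the Lagrangian intersection of $CC(\gamma)$ with $T^*_MW$ projects to $CC(g^*\gamma)$), and you do not bridge from that to the specialization identity. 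Note in particular that $\iota_*\sigma_f(\gamma)$ is a priori supported only on $C_YX\subset N|_Y$, whereas $p^*(g^*\gamma)$ is supported on the full preimage $p^{-1}(\operatorname{supp} g^*\gamma)$; establishing that the non-characteristic hypothesis forces these to agree on all of $N|_Y$ is the microlocal Verdier specialization theorem, a substantial result rather than a properness observation. Until that identity is proved (or imported with a precise reference), your argument reduces the theorem to an unverified input of comparable depth.
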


\begin{remark}\label{remark transversality implies non-characteristic property}
As pointed out in \cite{schuertrans}, the non-characteristic condition for $\gamma \in F(X)$ in \Cref{theorem vrr chern} above is for example satisfied when $M$ is transverse to all strata of a complex algebraic Whitney stratification of $X \subset W$, and $\gamma$ is constructible with respect to this stratification, i.e., $\gamma|_{S}$ is locally constant for all strata $S$ of $X$.
\end{remark}

A simple consequence is that the highest non-trivial homogeneous component of the Chern class is the fundamental class (compare \Cref{prop top component of it1}).

\begin{cor}\label{prop top component of chern}
For an irreducible projective variety $X$ of complex dimension $d$, the highest non-trivial homogeneous component of the class
$$
c_{\ast}(X) = c_{0}(X) + c_{1}(X) + \dots \in A_{\ast}(X)
$$
is the fundamental class, $c_{d}(X) = [X]_{X}$.
\end{cor}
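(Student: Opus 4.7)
The plan is to mirror closely the argument used for $IT_{1,\ast}$ in \Cref{prop top component of it1}, substituting the Verdier-Riemann-Roch formula of \Cref{theorem Verdier-Riemann-Roch type formula} by its Chern-class analog \Cref{theorem vrr chern}. Since $X$ is irreducible of complex dimension $d$, we have $A_k(X) = 0$ for $k > d$ and $A_d(X)$ is generated by the fundamental class $[X]_X$, so I would write $c_d(X) = r \cdot [X]_X$ for some $r \in \mathbb{Q}$, reducing the task to showing $r = 1$.

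First, I would fix a projective embedding $X \subset \mathbb{P}^m$ and equip $X \subset \mathbb{P}^m$ with a complex algebraic Whitney stratification. Applying the topological Kleiman-type transversality theorem already invoked in \Cref{prop top component of it1}, I then pick a generic linear subspace $H \subset \mathbb{P}^m$ of complex codimension $d$ transverse to all strata of $X$. The intersection $Y := H \cap X$ is then a reduced $0$-dimensional subvariety consisting of $k > 0$ points (namely the degree of the embedding). By \Cref{remark transversality implies non-characteristic property}, the closed embedding $f \colon H \hookrightarrow \mathbb{P}^m$ is non-characteristic with respect to $1_X \in F(X)$, so \Cref{theorem vrr chern} applies to $\gamma = 1_X$.

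Applying that formula with $W = \mathbb{P}^m$ and $M = H$ gives, using $g^\ast(1_X) = 1_Y$ together with the triviality of $N|_Y$ over a finite set of points,
$$ (f, i)^{!}_{\operatorname{ref}}(c_\ast(X)) = c^\ast(N|_Y) \cap c_\ast(1_Y) = [Y]_Y \in A_0(Y). $$
On the other hand, $(f, i)^!_{\operatorname{ref}} \colon A_\ast(X) \to A_{\ast - d}(Y)$ decreases degree by $d$, and since $A_k(Y) = 0$ for $k > 0$, only the top-degree component $c_d(X)$ contributes to the left-hand side. Combined with the standard identity $(f,i)^!_{\operatorname{ref}}([X]_X) = [Y]_Y$ from Fulton's refined intersection theory in this transverse setup (see Fulton \cite[Section 6.2]{fultonintth}), this yields
$$ r \cdot [Y]_Y = (f,i)^!_{\operatorname{ref}}(r \cdot [X]_X) = (f,i)^!_{\operatorname{ref}}(c_d(X)) = [Y]_Y, $$
and $[Y]_Y \neq 0$ forces $r = 1$.

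The computation itself is essentially routine once \Cref{theorem vrr chern} is in hand; the main delicate point is the verification of the hypotheses of that theorem in the generic linear section setup, namely the non-characteristic property (handled via \Cref{remark transversality implies non-characteristic property}) and the transversality ensuring that $Y$ is a reduced $0$-dimensional scheme with a trivial normal bundle. All of these points are already addressed in the proof of \Cref{prop top component of it1} and require no new ideas.
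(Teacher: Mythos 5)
Your proposal is correct and follows essentially the same route as the paper: reduce to computing a coefficient $r$ via a generic codimension-$d$ linear section, apply \Cref{theorem vrr chern} to $1_X$, and use the triviality of $c^{\ast}(N|_Y)$ over the $0$-dimensional $Y$. The only cosmetic difference is at the last step — you conclude directly from $r\,[Y]_Y = [Y]_Y$ with $[Y]_Y \neq 0$, whereas the paper keeps the right-hand side as $c_0(Y)$ and applies the augmentation to get $rk = k$; the paper also makes explicit that $Y$ lies in the smooth locus so that $g\colon Y \hookrightarrow X$ is regular, which you leave implicit (it is what guarantees $(f,i)^!_{\operatorname{ref}}([X]_X) = [Y]_Y$ with multiplicity one).
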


\begin{proof}
Since $X$ is irreducible, we know that $A_{d}(X)$ is generated by the fundamental class $[X]_{X}$, and that $A_{i}(X) = 0$ for $i > d$.
Therefore, writing $c_{d}(X) = r \cdot [X]_{X}$ for some $r \in \mathbb{Z}$, it remains to show that $r = 1$.
For this purpose, we fix an embedding $X \subset \mathbb{P}^{m} =: W$ and a complex algebraic Whitney stratification on $X$ (see \Cref{example complex algebraic Whitney stratifications are algebraic stratifications}).
By applying a topological version of Kleiman's transversality theorem (see e.g. \cite[p. 39, Theorem 1.3.6 and Example 1.3.7]{gmsmt}), we find a generic linear subspace $H \subset \mathbb{P}^{m} = W$ of complex codimension $d$ that is transverse to all strata of $X$.
Then, the intersection $Y := H \cap X$ is a pure $0$-dimensional closed subvariety of a Zariski open subset $U \subset X$.
Hence, the closed embedding $g \colon Y \hookrightarrow X$ is regular since it is the composition of a smooth embedding of smooth varieties and an open embedding.
Therefore, the refined Gysin map $(f, i)^{!}_{\operatorname{ref}} \colon A_{\ast}(X) \rightarrow A_{\ast}(Y)$ associated to the cartesian square of closed embeddings
\[ \xymatrix{
Y := H\cap X \ar[d] \ar[r]^-{g} &
  X \ar[d]^{i} \\
H \ar[r]_f & W
} \]
coincides with the algebraic Gysin map $g^{!}_{\operatorname{alg}} \colon A_{\ast}(X) \rightarrow A_{\ast}(Y)$ because $f$ and $g$ are both codimension $d$ regular embeddings (see Fulton \cite[p. 99, Remark 6.2.1]{fultonintth}).
By \Cref{remark transversality implies non-characteristic property}, the constructible function
$$
\gamma := 1_{X} \in F(X)
$$
satisfies the non-characteristic property of \Cref{theorem vrr chern} because $M := H$ is by assumption transverse to all strata of the given complex algebraic Whitney stratification on $X$, and $\gamma|_{S} = 1_{S}$ is locally constant for all strata $S$ of $X$.
The pull-back $g^{\ast} \colon F(X) \rightarrow F(Y)$ induced by the closed embedding $g \colon Y \hookrightarrow X$ satisfies $g^{\ast}(\gamma) = g^{\ast}(1_{X}) = 1_{H \cap X} = 1_{Y}$.
Hence, \Cref{theorem vrr chern} yields
\begin{equation*}
g^{!}_{\operatorname{alg}}(c_{\ast}(X)) = (f, i)^{!}_{\operatorname{ref}}(c_{\ast}(X)) = c^{\ast}(N|_{Y}) \cap c_{\ast}(Y) \in A_{\ast}(Y).
\end{equation*}
Since the compact subvariety $Y \subset \mathbb{P}^{m}$ has pure dimension $0$, it consists of a finite number $k > 0$ of points.
(By construction, $k$ is the degree of the embedding $i \colon X \hookrightarrow \mathbb{P}^{m}$, and is hence positive.)
As any vector bundle over a one-point space is trivial, we have  $c^{\ast}(N|_{Y}) = 1$.
Thus, $g^{!}_{\operatorname{alg}}(c_{\ast}(X)) = c_{\ast}(Y)$.
Furthermore, we have $g^{!}_{\operatorname{alg}}([X]_{X}) = [Y]_{Y}$ (see e.g. \cite[p. 100, Example 6.2.1]{fultonintth}).
Altogether, we conclude that
$$
r \cdot [Y]_{Y} = (f, i)^{!}_{\operatorname{ref}}(r \cdot [X]_{X}) = (f, i)^{!}_{\operatorname{ref}}(c_{d}(X)) = c_{0}(Y) \in A_{0}(Y).
$$
By applying the augmentation $\varepsilon_{\ast} \colon A_{0}(Y) \rightarrow \mathbb{Z}$ and using that $Y$ is smooth, we obtain
$$
r \cdot k = r \cdot \varepsilon_{\ast}\cdot [Y]_{Y} = \varepsilon_{\ast} c_{0}(Y) = k.
$$
Since $k > 0$, we conclude that $r = 1$.
\end{proof}

\begin{remark}
As in \Cref{remark alternative proof fundamental class}, an alternative proof of \Cref{prop top component of chern} can be obtained by observing that the restriction of $c_{d}(X)$ to the regular part of $X$ is $c_{d}(X_{\operatorname{reg}})$.
The restriction $A_{d}(X) \rightarrow A_{d}(X_{\operatorname{reg}})$ is an isomorphism by \cite[Proposition 1.8, p. 21]{fultonintth} (recall that $X$ is assumed to be irreducible).
\end{remark}

\Cref{theorem vrr chern} implies the following result.

\begin{cor}\label{thm algebraic Gysin homomorphism fundamental classes without tor independence}
Let $X \hookrightarrow W \hookleftarrow M$ be closed algebraic embeddings of pure-dimensional complex quasiprojective algebraic varieties with $M, W$ smooth.
Let $V \subset X$ be an irreducible closed subvariety.
We suppose that $X \subset W$ is equipped with a complex algebraic Whitney stratification such that $M$ is transverse to all strata, and $V$ is a union of strata.
Then the refined Gysin map
\[ (f, i)^{!}_{\operatorname{ref}} \colon A_{\ast} (X) \longrightarrow A_{\ast-c} (Y) \]
associated to the cartesian square of inclusions
\[ \xymatrix{
Y := M\cap X \ar[d] \ar[r]^-{g} &
  X \ar[d]^{i} \\
M \ar[r]_f & W
} \]
satisfies $(f, i)^{!}_{\operatorname{ref}}[V]_{X} = [V \cap Y]_{Y}$, where $c$ denotes the complex codimension of $M$ in $W$.
\end{cor}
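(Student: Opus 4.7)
The plan is to apply \Cref{theorem vrr chern} to the indicator function $\gamma := 1_{V} \in F(X)$ and to isolate the top-dimensional homogeneous components of the resulting identity in $A_{\ast}(Y)$. First, I would verify the non-characteristic hypothesis: since $V$ is a union of strata of the given complex algebraic Whitney stratification of $X$, the function $1_{V}$ is constructible with respect to this stratification, and since $M$ is transverse to every stratum, \Cref{remark transversality implies non-characteristic property} supplies the non-characteristic condition for $f$ with respect to $\operatorname{supp}(CC(1_{V}))$. Using $g^{\ast}(1_{V}) = 1_{V \cap Y}$, \Cref{theorem vrr chern} then produces
$$
(f,i)^{!}_{\operatorname{ref}} c_{\ast}(1_{V}) \;=\; c^{\ast}(N|_{Y}) \cap c_{\ast}(1_{V \cap Y}) \qquad \text{in } A_{\ast}(Y).
$$

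Next I would compare top-dimensional components. Set $d := \dim V$. By the general top-component property of MacPherson's transformation (the argument of \Cref{prop top component of chern} extends verbatim to the quasiprojective setting upon restriction to $V_{\operatorname{reg}}$, using that $A_{d}(V) \twoheadrightarrow A_{d}(V_{\operatorname{reg}})$ is an isomorphism for irreducible $V$), one has $c_{\ast}(1_{V}) = [V]_{X} + (\text{classes of dimension} < d)$. Since $(f,i)^{!}_{\operatorname{ref}}$ is graded of degree $-c$, the top-dimensional component of the left-hand side is $(f,i)^{!}_{\operatorname{ref}}[V]_{X} \in A_{d-c}(Y)$. For the right-hand side, I would first observe that transversality of $M$ to every stratum $S$ of $V$ together with $\dim S \leq d$ (by irreducibility of $V$) forces $\dim(V \cap Y) \leq d - c$, with equality iff $M$ meets the open stratum of $V$; the same top-component property then gives $c_{\ast}(1_{V \cap Y}) = [V \cap Y]_{Y} + (\text{lower-dim terms})$, where $[V \cap Y]_{Y} \in A_{d-c}(Y)$ is interpreted as zero in the excess-intersection case. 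Because $c^{\ast}(N|_{Y}) = 1 + (\text{positive cohomological degree})$, capping preserves the $(d-c)$-dimensional component, so matching top components of the VRR identity yields the claim.

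The one point that I expect to require genuine care is the top-component identification for $c_{\ast}(1_{V})$ and $c_{\ast}(1_{V \cap Y})$ without assuming projectivity, and the correct handling of degenerate intersections where $M$ misses the open stratum of $V$; everything else reduces to straightforward degree bookkeeping once \Cref{theorem vrr chern} is in place.
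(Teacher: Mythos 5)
Your proposal is correct and follows essentially the same route as the paper: apply \Cref{theorem vrr chern} to $1_V$, with the non-characteristic hypothesis supplied by \Cref{remark transversality implies non-characteristic property}, and then extract the degree-$(d-c)$ component using the top-component property of $c_\ast$. The paper packages the top-component step slightly differently, first rewriting $c_\ast(1_V)=\alpha_\ast c_\ast(V)$ and $c_\ast(1_{V\cap Y})=\beta_\ast c_\ast(V\cap Y)$ via proper pushforward on constructible functions before invoking \Cref{prop top component of chern}, and it simply asserts that $V\cap Y$ is pure $(d-c)$-dimensional rather than provisioning for excess dimension as you do --- an assertion that is in fact automatic, since transversality makes $V\cap M\hookrightarrow V$ locally normally nonsingular of real codimension $2c$, so the degenerate case you set aside cannot occur when $V\cap Y\neq\varnothing$.
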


\begin{proof}
By \Cref{remark transversality implies non-characteristic property}, the constructible function
$$
\gamma := 1_{V} \in F(X)
$$
satisfies the non-characteristic property of \Cref{theorem vrr chern} because the given complex algebraic Whitney stratification on $X$ is such that $M$ is transverse to all strata, and $V$ is a union of strata.
The pull-back $g^{\ast} \colon F(X) \rightarrow F(Y)$ satisfies $g^{\ast}(\gamma) = g^{\ast}(1_{V}) = 1_{V \cap Y}$.
Hence, \Cref{theorem vrr chern} yields
\begin{equation}\label{equation vrr chern for subvariety}
(f, i)^{!}_{\operatorname{ref}}(c_{\ast}(1_{V})) = c^{\ast}(N|_{Y}) \cap c_{\ast}(1_{V \cap Y}) \in A_{\ast}(Y).
\end{equation}
The inclusions $\alpha \colon V \hookrightarrow X$ and $\beta \colon V \cap Y \hookrightarrow Y$ are closed embeddings, and hence proper.
The induced maps $\alpha_{\ast} \colon F(V) \rightarrow F(X)$ and $\beta_{\ast} \colon F(V \cap Y) \rightarrow F(Y)$ satisfy $\alpha_{\ast}(1_{V}) = 1_{V}$ and $\beta_{\ast}(1_{V \cap Y}) = 1_{V \cap Y}$, respectively.
(In general, the push-forward $\varphi_{\ast} \colon F(U) \rightarrow F(U')$ of a proper morphism $\varphi \colon U \rightarrow U'$ is defined on a subvariety $Z \subset U$ by $\varphi_{\ast}(1_{Z})(v) = \chi(Z \cap \varphi^{-1}(v))$, where $\chi$ denotes the topological Euler characteristic, see \cite[p. 376, Example 19.1.7]{fultonintth}.)
Since $c_{\ast}$ commutes with proper push-forward according to \cite[p. 377, Example 19.1.7]{fultonintth}, equation (\ref{equation vrr chern for subvariety}) becomes
\begin{equation}\label{equation vrr chern for subvariety 2}
(f, i)^{!}_{\operatorname{ref}}(\alpha_{\ast}c_{\ast}(V)) = c^{\ast}(N|_{Y}) \cap \beta_{\ast}c_{\ast}(V \cap Y) \in A_{\ast}(Y).
\end{equation}
Let $d$ denote the complex dimension of $V$.
Then, the highest non-trivial homogeneous component of $c_{\ast}(V)$ is $c_{d}(V) = [V]_{V}$ by \Cref{prop top component of chern}.
Similarly, since $V \cap Y = V \cap M$ is pure $(d-c)$-dimensional, the highest non-trivial homogeneous component of $c_{\ast}(V \cap Y)$ is $c_{d-c}(V \cap Y) = [V \cap Y]_{V \cap Y}$.
Consequently, evaluation of equation (\ref{equation vrr chern for subvariety 2}) in degree $d-c$ yields
$$
(f, i)^{!}_{\operatorname{ref}}([V]_{X}) = [V \cap Y]_{Y} \in A_{d-c}(Y).
$$
\end{proof}

Since $c_{\ast}(X) \in A_{\ast}(X)$ is an algebraic cycle, there exist a finite number of irreducible closed subvarieties $V_{1}, \dots, V_{r} \subset X$ such that
\begin{equation}\label{chern is algebraic cycle}
c_{\ast}(X) = \sum_{l=1}^{r} \lambda_{l}[V_{l}]_{X}, \quad \lambda_{l} \in \mathbb{Z}.
\end{equation}
For an irreducible closed subvariety $X \subset W$ of a smooth variety $W$, we call a complex algebraic Whitney stratification of $X \subset W$ \emph{$c_{\ast}(X)$-constructible} if there is a representation (\ref{chern is algebraic cycle}) in which every $V_{l}$ is a union of strata of $X$.
It follows from \cite[p. 43, Section 1.7, Theorem]{gmsmt} that such Whitney stratifications exist on $X \subset W$.

\begin{thm}\label{thm chern satisfies topological gysin coherence}
Let $X \hookrightarrow W \hookleftarrow M$ be closed algebraic embeddings of pure-dimensional complex algebraic varieties with $M, W$ smooth.
Suppose that $X$ is equipped with a complex algebraic Whitney stratification that is $c_{\ast}(X)$-constructible, and such that $M$ is transverse to all strata of $X$ and $Y = M \cap X$ is compact.
Then, the topologically normally nonsingular inclusion $g \colon Y = M \cap X \hookrightarrow X$ satisfies
$$
g^{!!}_{\operatorname{top}}(c_{\ast}(X)) = c^{\ast}(N|_{Y}) \cap c_{\ast}(Y) \in H^\BM_{2*} (Y) \otimes \mathbb{Q},
$$
where $N = N_{M}W$ denotes the complex normal bundle of $M$ in $W$.
\end{thm}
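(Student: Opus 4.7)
The plan is to bootstrap from the Chow-theoretic Verdier-Riemann-Roch formula (\Cref{theorem vrr chern}) to a statement in Borel-Moore homology, using the fact that the Chern class $c_{\ast}(X)$ is an algebraic cycle and that the given stratification is $c_{\ast}(X)$-constructible, so that everything reduces to fundamental classes $[V_{l}]_{X}$ of subvarieties $V_{l}$ which are unions of strata. Since the Tor-independence hypothesis is not assumed, we cannot invoke \Cref{alg and top gysin} directly; instead, we will compare the refined algebraic Gysin map of Fulton with the topological Borel-Moore Gysin map one cycle class at a time.

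First, I would apply \Cref{theorem vrr chern} to $\gamma = 1_X \in F(X)$. By \Cref{remark transversality implies non-characteristic property}, the transversality of $M$ to the Whitney strata of $X$ ensures that $f \colon M \hookrightarrow W$ is non-characteristic with respect to $\operatorname{supp}(CC(1_X))$. Together with $g^*(1_X) = 1_Y$, this yields
\[
(f, i)^{!}_{\operatorname{ref}}(c_{\ast}(X)) \;=\; c^{\ast}(N|_{Y}) \cap c_{\ast}(Y) \;\in\; A_{\ast}(Y).
\]
Passing through the cycle map $\cl \colon A_{\ast}(Y) \to H_{2\ast}^{\BM}(Y) \otimes \rat$ and using compatibility of the cycle map with cap products by Chern classes of algebraic vector bundles, we obtain the same identity in Borel-Moore homology.

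Next, I would exploit the $c_{\ast}(X)$-constructibility of the stratification. Writing $c_{\ast}(X) = \sum_{l} \lambda_{l} [V_{l}]_{X}$ with each $V_{l}$ a union of strata of $X$, the hypotheses of \Cref{thm algebraic Gysin homomorphism fundamental classes without tor independence} are met for each $V_{l}$, giving $(f, i)^{!}_{\operatorname{ref}}[V_{l}]_{X} = [V_{l} \cap Y]_{Y}$ in $A_{\ast}(Y)$. On the topological side, each $V_{l}$ inherits a Whitney stratification transverse to $M$ whose intersection with $M$ is contained in the compact set $Y = M \cap X$; applying the Borel-Moore version of \Cref{proposition topological Gysin homomorphism fundamental classes} (as formulated in \Cref{remark borel moore gysin fundamental classes}, where compactness of $K$ is unnecessary) yields $g^{!!}_{\operatorname{top}}[V_{l}]_{X} = [V_{l} \cap Y]_{Y}$ in $H_{2\ast}^{\BM}(Y) \otimes \rat$. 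By linearity, $g^{!!}_{\operatorname{top}}(c_{\ast}(X)) = \sum_{l} \lambda_{l} [V_{l} \cap Y]_{Y}$, which equals $\cl\bigl((f, i)^{!}_{\operatorname{ref}}(c_{\ast}(X))\bigr)$ by the identity in the previous step.

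Combining these two computations gives
\[
g^{!!}_{\operatorname{top}}(c_{\ast}(X)) \;=\; \cl\bigl((f, i)^{!}_{\operatorname{ref}}(c_{\ast}(X))\bigr) \;=\; c^{\ast}(N|_{Y}) \cap c_{\ast}(Y),
\]
which is the desired formula. The main technical obstacle I anticipate is the bridge between the algebraic refined Gysin map and the topological Gysin map on Borel-Moore homology without the crutch of Tor-independence that powered \Cref{alg and top gysin}; the workaround is exactly to reduce to fundamental classes via the $c_{\ast}(X)$-constructibility of the stratification, where both Gysin maps are explicitly computable as transverse intersections. A secondary point to verify carefully is the commutation of the cycle map with the cap product by the cohomological Chern class $c^{\ast}(N|_Y)$, but this is a standard consequence of the fact that algebraic and topological Chern classes of an algebraic vector bundle correspond under $\cl$.
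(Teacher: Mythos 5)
Your proposal follows essentially the same route as the paper: apply the Chow-level Verdier-Riemann-Roch formula of \Cref{theorem vrr chern} with $\gamma = 1_X$, exploit $c_{\ast}(X)$-constructibility to expand $c_{\ast}(X)$ as $\sum_l \lambda_l [V_l]_X$, evaluate the refined Gysin map on each $[V_l]_X$ via \Cref{thm algebraic Gysin homomorphism fundamental classes without tor independence}, compare with the topological Gysin map via the Borel-Moore version of \Cref{proposition topological Gysin homomorphism fundamental classes}, and tie everything together with the cycle map. This matches the published proof in structure and in the choice of every key lemma, so no further comment is needed.
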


\begin{proof}
By \Cref{remark transversality implies non-characteristic property}, the constructible function
$$
\gamma := 1_{X} \in F(X)
$$
satisfies the non-characteristic property of \Cref{theorem vrr chern} because $M$ is by assumption transverse to all strata of the given complex algebraic Whitney stratification on $X$, and $\gamma|_{S} = 1_{S}$ is locally constant for all strata $S$ of $X$.
The pull-back $g^{\ast} \colon F(X) \rightarrow F(Y)$ induced by the closed embedding $g \colon Y \hookrightarrow X$ satisfies $g^{\ast}(\gamma) = g^{\ast}(1_{X}) = 1_{M \cap X} = 1_{Y}$.
Hence, \Cref{theorem vrr chern} yields
\begin{equation*}
(f, i)^{!}_{\operatorname{ref}}(c_{\ast}(X)) = c^{\ast}(N|_{Y}) \cap c_{\ast}(Y) \in A_{\ast}(Y).
\end{equation*}
By assumption, the Chern-Schwartz-MacPherson class $c_{\ast}(X)$ can be written in the form (\ref{chern is algebraic cycle}) for some irreducible closed subvarieties $V_{1}, \dots, V_{r} \subset X$ such that every $V_{l}$ is a union of strata of the given complex algebraic Whitney stratification on $X$.
Therefore, \Cref{thm algebraic Gysin homomorphism fundamental classes without tor independence} implies that
\begin{equation*}
(f, i)^{!}_{\operatorname{ref}}(c_{\ast}(X)) = (f, i)^{!}_{\operatorname{ref}}\sum_{l=1}^{r} \lambda_{l}[V_{l}]_{X} = \sum_{l=1}^{r} \lambda_{l} \cdot (f, i)^{!}_{\operatorname{ref}}([V_{l}]_{X}) = \sum_{i=1}^{r} \lambda_{l}[V_{l} \cap Y]_{Y}.
\end{equation*}
By applying the cycle map $\cl: A_* (Y) \to H^\BM_{2*} (Y) \otimes \mathbb{Q}$ from Chow homology to Borel-Moore homology, we conclude that
\begin{align*}
\sum_{i=1}^{r} \lambda_{l}[V_{l} \cap Y]_{Y} &= \cl (f, i)^{!}_{\operatorname{ref}}(c_{\ast}(X)) = \cl (c^{\ast}(N|_{Y}) \cap c_{\ast}(Y)) \\
&= c^{\ast}(N|_{Y}) \cap \cl (c_{\ast}(Y)) \in H^\BM_{2*} (Y) \otimes \mathbb{Q},
\end{align*}
where the cycle map $\cl$ and the cap product with Chern classes are compatible by \cite[p. 374, Prop. 19.1.2]{fultonintth}.
Furthermore, by the analog of \Cref{proposition topological Gysin homomorphism fundamental classes} for the topological Gysin map in Borel-Moore homology (see \Cref{remark borel moore gysin fundamental classes}), we have
$$
\sum_{i=1}^{r} \lambda_{l}[V_{l} \cap Y]_{Y} = \sum_{i=1}^{r} \lambda_{l} g^{!!}_{\operatorname{top}}([V_{l}]_{X}) = g^{!!}_{\operatorname{top}}\sum_{i=1}^{r} \lambda_{l}[V_{l}]_{X} = g^{!!}_{\operatorname{top}}(c_{\ast}(X)) \in H^\BM_{2*} (Y) \otimes \mathbb{Q}.
$$
\end{proof}

Note that the following theorem does not require a Cohen-Macaulay assumption.

\begin{thm}\label{thm chern classes are gysin coherent cc}
The pair $(c\ell^{\ast}, c\ell_{\ast})$ given by $c\ell^{\ast}(f \colon M \hookrightarrow W) = c^{\ast}(N_{M}W)$ and $c\ell_{\ast}(i \colon X \hookrightarrow W) = i_{\ast}c_{\ast}(X)$ forms a Gysin coherent characteristic class.
\end{thm}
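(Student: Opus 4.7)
The plan is to verify the five axioms of \Cref{definition gysin coherent characteristic classes} for the family $\mathcal{X}_{0}$ of all inclusions of compact irreducible subvarieties in smooth varieties, following the template of \Cref{proposition it class is l type characteristic class}. No Cohen--Macaulay hypothesis is needed here, because the Verdier--Riemann--Roch formula for Chern classes (\Cref{theorem vrr chern}) only requires the non-characteristic condition, which by \Cref{remark transversality implies non-characteristic property} is guaranteed by Whitney transversality alone, with no Tor-independence. I will declare $Z, Z' \subset W$ to be $\mathcal{X}_{0}$-transverse when they admit complex algebraic Whitney stratifications $\mathcal{S}_{Z}$ and $\mathcal{S}_{Z'}$ that are $c_{\ast}(Z)$-, respectively $c_{\ast}(Z')$-constructible, with all strata pairwise transverse, and when $Z$ and $Z'$ are generically transverse in $W$ in the sense of \cite[Section 3]{bw}.

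The normalization $c\ell^{0}(f) = c^{0}(N_{M}W) = 1$ is automatic from the convention for Chern classes. The top-degree axiom $c\ell_{2d}(i) = [X]_{W}$ holds for irreducible $X$ by pushing forward the identity $c_{d}(X) = [X]_{X}$ established in \Cref{prop top component of chern}, which suffices by \Cref{remark top class only for irreducible varieties}. Multiplicativity follows from the product formula for Chern--Schwartz--MacPherson classes $c_{\ast}(X \times X') = c_{\ast}(X) \times c_{\ast}(X')$ together with naturality of the cross product under $(i \times i')_{\ast}$. Isomorphism invariance is naturality of Chern classes of vector bundles (for $c\ell^{\ast}$) and naturality of MacPherson's transformation $c_{\ast}$ under proper pushforward applied to isomorphisms (for $c\ell_{\ast}$). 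Naturality under an inclusion $X \subset M \subset W$ is immediate, since $c\ell_{\ast}(i) = i_{\ast} c_{\ast}(X) = f_{\ast} i^{M}_{\ast} c_{\ast}(X) = f_{\ast} c\ell_{\ast}(i^{M})$.

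For the Gysin axiom, let $i \colon X \hookrightarrow W$ lie in $\mathcal{X}_{0}$ and let $f \colon M \hookrightarrow W$ be the inclusion of a smooth irreducible closed subvariety such that $X$ and $M$ are $\mathcal{X}_{0}$-transverse. By construction, the chosen $c_{\ast}(X)$-constructible Whitney stratification of $X$ is transverse to $M$ in $W$, and $Y = M \cap X$ is compact. Writing $j \colon Y \hookrightarrow M$ for the inclusion and $N = N_{M}W$, \Cref{thm chern satisfies topological gysin coherence} yields
\[
g^{!!}_{\operatorname{top}}(c_{\ast}(X)) = c^{\ast}(N|_{Y}) \cap c_{\ast}(Y)
\]
in $H^{\BM}_{2\ast}(Y) \otimes \mathbb{Q}$, where $g \colon Y \hookrightarrow X$ is topologically normally nonsingular with normal bundle $\nu = j^{\ast} N$ by \Cref{normally nonsingular inclusions induced by transverse intersections}. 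Since $X$ and $Y$ are compact, Alexander duality identifies Borel--Moore with singular homology, and \Cref{top gysin borel moore and singular coincide} permits replacing $g^{!!}_{\operatorname{top}}$ by $g^{!}_{\operatorname{top}}$ under this identification. Combining the base change $f^{!}_{\operatorname{top}} i_{\ast} = j_{\ast} g^{!}_{\operatorname{top}}$ from \cite[Proposition 2.4]{bw} with the projection formula then gives
\[
f^{!}_{\operatorname{top}} c\ell_{\ast}(i) = j_{\ast}\bigl(j^{\ast} c^{\ast}(N_{f}) \cap c_{\ast}(Y)\bigr) = c^{\ast}(N_{f}) \cap j_{\ast} c_{\ast}(Y) = c\ell^{\ast}(f) \cap c\ell_{\ast}(j),
\]
which is the desired Gysin identity.

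The main technical obstacle is to confirm that the proposed notion of $\mathcal{X}_{0}$-transversality satisfies the three structural properties demanded of a transversality relation. Properness follows from generic transversality via \cite[Corollary 3.4]{bw}, and locality is immediate because open restriction preserves both Whitney stratifications and $c_{\ast}$-constructibility. The Kleiman--Bertini property requires, given inclusions $X, X' \hookrightarrow G$ in $\mathcal{X}_{0}$ with preferred $c_{\ast}$-constructible Whitney stratifications, a nonempty open dense $U \subset GL_{n}(\mathbb{C})$ (in the complex topology) such that for every $g \in U$ the pair $X$ and $g \cdot X'$ is simultaneously Whitney transverse (with respect to the chosen stratifications and their $g$-translates) and generically transverse in $G$. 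The Whitney part is a direct application of \cite[Theorem 2.2]{bw}, which imposes no constructibility restriction on the input stratifications, while the generic part is \cite[Theorem 3.5]{bw}; intersecting the two resulting dense opens produces the desired $U$.
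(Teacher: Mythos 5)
Your proof is correct and follows essentially the same route as the paper's: normalization via \Cref{prop top component of chern}, multiplicativity via the Kwiecinski/Kwiecinski--Yokura product formula, isomorphism invariance and naturality from functoriality of $c_{\ast}$ and $c^{\ast}$, and, for the Gysin axiom, exactly the combination of \Cref{thm chern satisfies topological gysin coherence}, \Cref{top gysin borel moore and singular coincide}, and the base change \cite[Proposition 2.4]{bw}. You also correctly identify the structural point that distinguishes this theorem from the $IT_{1,\ast}$ and Todd cases: Sch\"urmann's Verdier--Riemann--Roch for Chern classes only needs the non-characteristic/Whitney condition, so no Tor-independence and hence no Cohen--Macaulay restriction, giving a Gysin coherent class for the full family $\mathcal{X}_0$.

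The one genuine difference lies in how you set up $\mathcal{X}_0$-transversality and verify its structural axioms. You impose $c_{\ast}$-constructibility of both stratifications unconditionally and add generic transversality, then deduce properness from \cite[Corollary 3.4]{bw}; locality then requires the (true but not entirely trivial) fact that $c_{\ast}$-constructibility passes to open restrictions, which you assert without argument. The paper instead only demands $c_{\ast}$-constructibility when the relevant subvariety is compact; this conditional phrasing lets locality be handled by the short observation that a compact $Z\cap U$ must already equal $Z$, sidestepping the open-restriction compatibility of $c_{\ast}$-constructibility entirely, and it derives properness directly from Whitney transversality of the strata without invoking generic transversality. Both routes are sound; the paper's conditional formulation just makes the locality check formally trivial, while your version is a bit more uniform at the cost of one unargued preservation claim. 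Either way the Kleiman part goes through by \cite[Theorem 2.2]{bw} (and, in your version, additionally \cite[Theorem 3.5]{bw}) applied to the fixed $c_{\ast}$-constructible stratifications.
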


\begin{proof}
By the properties of the cohomological Chern class $c^{\ast}$, the class $c\ell^{\ast}(f) = c^{\ast}(N_{f}) \in H^{\ast}(M; \mathbb{Q})$ is normalized for all $f$.
Moreover, by \Cref{prop top component of chern}, the highest nontrivial homogeneous component of the class $c_{\ast}(X) = c_{0}(X) + c_{1}(X) + \dots \in H_{\ast}(X; \mathbb{Q})$ is $c_{d}(X) = [X]_{X}$, where $d$ denotes the complex dimension of $X$, and $X$ may be assumed to be irreducible by \Cref{remark top class only for irreducible varieties}.
Consequently, the highest nontrivial homogeneous component of $c\ell_{\ast}(i) = i_{\ast} c_{\ast}(X) \in H_{\ast}(W; \mathbb{Q})$ is the ambient fundamental class $i_{\ast}c_{d}(X) = i_{\ast}[X]_{X} = [X]_{W}$.

We proceed to check the axioms of Gysin coherent characteristic classes for the pair $c\ell$.
As for axiom (\ref{axiom multiplicativity}), multiplicativity $c_{\ast}(X \times X') = c_{\ast}(X) \times c_{\ast}(X')$ holds for all compact irreducible complex algebraic varieties $X$ and $X'$ by Kwiecinski \cite{kwiec} and Kwiecinski-Yokura \cite{ky}.
Hence, for every $i \colon X \rightarrow W$ and $i' \colon X' \rightarrow W'$, the claim follows by applying $(i \times i')_{\ast}$ and using naturality of the cross product:
\begin{align*}
c\ell_{\ast}(i \times i') &= (i \times i')_{\ast}c_{\ast}(X \times X') = (i \times i')_{\ast}(c_{\ast}(X) \times c_{\ast}(X')) \\
&= i_{\ast}c_{\ast}(X) \times i_{\ast}'c_{\ast}(X') = c\ell_{\ast}(i) \times c\ell_{\ast}(i').
\end{align*}
Next, let us show that the pair $c\ell$ is compatible with ambient isomorphisms as stated in axiom (\ref{axiom isomorphism}).
As for $c^{\ast}$, we consider $f \colon M \rightarrow W$ and $f' \colon M' \rightarrow W'$, and an isomorphism $W \stackrel{\cong}{\longrightarrow} W'$ that restricts to an isomorphism $\phi \colon M \stackrel{\cong}{\longrightarrow} M'$.
Then, we have $\phi^{\ast}N_{f'} = N_{f}$, and thus
$$
\phi^{\ast}c\ell^{\ast}(f') = \phi^{\ast}c^{\ast}(N_{f'}) = c^{\ast}(\phi^{\ast}N_{f'}) = c^{\ast}(N_{f}) = c\ell^{\ast}(f).
$$
As for $c\ell_{\ast}$, we consider $i \colon X \rightarrow W$ and $i' \colon X' \rightarrow W'$, and an isomorphism $\Phi \colon W \stackrel{\cong}{\longrightarrow} W'$ that restricts to an isomorphism $\Phi_{0} \colon X \stackrel{\cong}{\longrightarrow} X'$.
Then, we have
$$
c_{\ast}(X') = c_{\ast}(1_{X'}) = c_{\ast}(\Phi_{0 \ast} 1_{X}) = \Phi_{0 \ast} c_{\ast}(1_{X}) = \Phi_{0 \ast} c_{\ast}(X)
$$
because $c_{\ast} \colon F(X) \longrightarrow H^\BM_{2*} (X)\otimes \rat$ commutes with proper push-forward (see e.g. \cite[p. 377, top]{fultonintth}).
Hence, we obtain
$$
\Phi_{\ast}c\ell_{\ast}(i) = \Phi_{\ast}i_{\ast}c_{\ast}(X) = i_{\ast}'\Phi_{0\ast}c_{\ast}(X) = i_{\ast}'c_{\ast}(X') = c\ell_{\ast}(i').
$$
To verify axiom (\ref{axiom locality}), we consider $i \colon X \rightarrow W$ and $f \colon M \rightarrow W$ such that $X \subset M$.
Then, the inclusion $i^{M} := i| \colon X \rightarrow M$ satisfies $f \circ i^{M} = i$, and we obtain
$$
f_{\ast}c\ell_{\ast}(i^{M}) = f_{\ast}i^{M}_{\ast}c_{\ast}(X) = i_{\ast}c_{\ast}(X) = c\ell_{\ast}(i).
$$
Finally, to show axiom (\ref{axiom gysin}), let us recall from \Cref{example typical choice for family x and transversality} that $\mathcal{X} = \mathcal{X}_{0}$ is the family of all inclusions $i \colon X \hookrightarrow W$ of compact irreducible subvarieties $X$ in smooth varieties $W$.
Furthermore, let us call two closed irreducible subvarieties $Z, Z' \subset W$ of a smooth variety $W$ \emph{$\mathcal{X}_{0}$-transverse} if they admit complex algebraic Whitney stratifications such that every stratum of $Z$ is transverse to every stratum of $Z'$, where the stratification of $Z$ (resp. $Z'$) can be chosen to be $c_{\ast}(Z)$-constructible when $Z$ is compact (resp. $c_{\ast}(Z')$-constructible when $Z'$ is compact).
Then, the notion of $\mathcal{X}_{0}$-transversality satisfies all required properties.
(In fact, the intersection $Z \cap Z'$ of two $\mathcal{X}_{0}$-transverse closed irreducible subvarieties $Z, Z' \subset W$ of a smooth variety $W$ is proper, which follows from the transversality of the strata of the complex algebraic Whitney stratifications of $Z$ and $Z'$.
Moreover, for any inclusions $i \colon X \rightarrow G$ and $i' \colon X' \rightarrow G$ in $\mathcal{X}_{0}$, there is a nonempty open dense subset $U \subset GL_{n}(\mathbb{C})$ (in the complex topology) such that $X$ is $\mathcal{X}_{0}$-transverse to $g \cdot X'$ for all $g \in U$.
This can be achieved by applying a topological version of Kleiman's transversality theorem (see e.g. \cite[p. 39, Theorem 1.3.6 and Example 1.3.7]{gmsmt}) to complex algebraic Whitney stratifications on the compact varieties $X$ and $X'$ that are $c_{\ast}(X)$-constructible and $c_{\ast}(X')$-constructible, respectively.
Finally, to see that our notion of $\mathcal{X}_{0}$-transversality satisfies locality, we suppose that $Z, Z' \subset W$ are $\mathcal{X}_{0}$-transverse.
Then, the desired transverse Whitney stratifications on $Z \cap U$ and $Z' \cap U$ are obtained by intersecting those on $Z$ and $Z'$ with $U$.
Moreover, if, say, $Z \cap U$ is compact, and hence a complete variety, then it is also a closed subset of $Z$, see e.g. \cite[p. 55, property i)]{mumfordred}.
But since $Z \cap U$ is also non-empty open subset of the irreducible space $Z$, we conclude that $Z \cap U = Z$.
Hence, $Z$ is compact and $c_{\ast}(Z)$-constructible, that is, $Z \cap U$ is $c_{\ast}(Z \cap U)$-constructible.)
Now, consider an inclusion $i \colon X \rightarrow W$ in $\mathcal{X}_{0}$ and an inclusion $f \colon M \rightarrow W$ (of a smooth closed subvariety $M \subset W$ in a smooth variety $W$) such that $M$ is irreducible, and $M$ and $X$ are $\mathcal{X}_{0}$-transverse in $W$.
By definition of $\mathcal{X}_{0}$-transversality, $X$ can be equipped with a complex algebraic Whitney stratification that is $c_{\ast}(X)$-constructible, and such that $M$ is transverse to all strata of $X$.
Hence, by \Cref{thm chern satisfies topological gysin coherence} and \Cref{top gysin borel moore and singular coincide} (where we may use the natural identification of Borel-Moore homology and singular homology because $X$ and $Y$ are compact), the topologically normally nonsingular inclusion $g \colon Y = M \cap X \hookrightarrow X$ satisfies
$$
g^{!}_{\operatorname{top}}(c_{\ast}(X)) = c^{\ast}(j^{\ast}N) \cap c_{\ast}(Y) \in H_{2*} (Y) \otimes \mathbb{Q},
$$
where $N = N_{M}W$ denotes the complex normal bundle of $M$ in $W$, and $j \colon Y \hookrightarrow M$ is the inclusion.
Using the base change $f^{!}_{\operatorname{top}}i_{\ast} = j_{\ast}g^{!}_{\operatorname{top}}$ for topological Gysin maps (see \cite[Proposition 2.4]{bw}), as well as $c^{\ast}(j^{\ast}N) = j^{\ast} c^{\ast}(N)$, we conclude that
\begin{align*}
f^{!}_{\operatorname{top}} c\ell_{\ast}(i) &= f^{!}_{\operatorname{top}} i_{\ast}c_{\ast}(X) = j_{\ast}g^{!}_{\operatorname{top}} c_{\ast}(X) = j_{\ast}(c^{\ast}(j^{\ast}N) \cap c_{\ast}(Y)) \\
&= j_{\ast}(j^{\ast} c^{\ast}(N) \cap c_{\ast}(Y)) = c^{\ast}(N) \cap j_{\ast}c_{\ast}(Y) = c\ell^{\ast}(f) \cap c\ell_{\ast}(j).
\end{align*}

This completes the proof of \Cref{thm chern classes are gysin coherent cc}.
\end{proof}

\begin{remark}\label{remark analogy for ichern}
A similar proof shows that the pair $(c\ell^{\ast}, c\ell_{\ast})$ given by $c\ell^{\ast}(f) = c^{\ast}(N_{f})$ and $c\ell_{\ast}(i) = i_{\ast}Ic_{\ast}(X)$ is also an example of a Gysin coherent characteristic class.
Here, $Ic_{\ast}(X):=c_{\ast}\chi_{\operatorname{stalk}}(IC_{X}^{H}[-n])$, with the constructible function $\chi_{\operatorname{stalk}}$ given by the stalkwise Euler characteristic.
(For completeness, we also point out that $Ic_{\ast}(X) = IT_{-1, \ast}(X)$, where the specialization $y = -1$ is possible in \Cref{intersection generalized todd class} because we actually have $IT_{y \ast} (X) \in H^\BM_{2 \ast} (X) \otimes \rat [y^{\pm 1}]$ as shown in \cite[p. 465, Proposition 5.21]{schuermannmsri}.
However, an alternative proof in analogy with \Cref{remark analogy for it0} will only yield a Gysin coherent characteristic class with respect to $\mathcal{X}_{CM}$.)
\end{remark}

\begin{remark}\label{remark analogy for chern mather}
The same type of argument can be used to show that the pair $(c\ell^{\ast}, c\ell_{\ast})$ given by $c\ell^{\ast}(f) = c^{\ast}(N_{f})$ and $c\ell_{\ast}(i) = i_{\ast}c_{\ast}^{M}(X)$ is another example of a Gysin coherent characteristic class, where
$$
c_{\ast}^{M}(X) := c_{\ast}(Eu_{X}) \in H_{2\ast}^{BM}(X; \mathbb{Q})
$$
is the Chern-Mather class of a possibly singular irreducible complex projective algebraic variety $X$, and $Eu_{X}$ denotes the Euler obstruction constructible function of $X$ introduced by MacPherson \cite{macph}.
In the proof, one exploits the following well known properties (see e.g. Parusi\'{n}ski and Pragacz \cite[Lemma 1.1]{pp}):
\begin{enumerate}
\item $Eu_X(x)=1$ for $x\in X_{reg}$.
\item $Eu_{X\times X'}(x,x')=E_X(x)\cdot Eu_{X'}(x')$ for $x\in X, x'\in X'$.
\item $Eu_X$ is constructible with respect to any complex Whitney stratification of $X$.
\item $f^*(Eu_X)=Eu_{M\cap X}$ for a complex manifold embedding $f: M\to W$ 
transversal to a complex Whitney stratification of $X$.
\end{enumerate}
\end{remark}

We have seen that the three generalizations $c_{\ast}$, $c_{\ast}^{M}$ and $Ic_{\ast}$ of the Chern class to singular varieties give rise to Gysin coherent classes.
As these classes are generally not equal, their associated genera must already be different in light of the Uniqueness \Cref{main result on Gysin coherent characteristic classes with respect to x} for Gysin coherent classes.
Indeed,
$$
|c_{\ast}|(X) = \chi(X)
$$
the topological Euler characteristic,
$$
|c_{\ast}^{M}|(X) = \chi(X; Eu_{X}) = \sum_{S}Eu_{X}(S) \cdot \chi(S),
$$
where the sum ranges over all connected strata $S$ of a complex algebraic Whitney stratification of $X$, and
$$
|Ic_{\ast}|(X) = |IT_{-1 \ast}|(X) = I \chi_{-1}(X) = \chi(IH^{\ast}(X; \mathbb{Q})),
$$
the intersection cohomology Euler characteristic.

\section{The Todd Class as a Gysin Coherent Characteristic Class}\label{example todd class}
Let $\tau_{\ast} \colon K_0 (X) \longrightarrow H^\BM_{2*} (X)\otimes \rat$ denote the Todd class transformation of Baum, Fulton and MacPherson \cite{bfm}, \cite{fmp}.
Recall from \Cref{rem.bfmtoddtochow} that this transformation is compatible with its Chow homology analog $\tau_{\ast} \colon K_0 (X) \longrightarrow A_{\ast} (X)\otimes \rat$ under the cycle map $\operatorname{cl} \colon A_{\ast} (X)\otimes \rat \rightarrow H^\BM_{2*} (X)\otimes \rat$.
Baum, Fulton and MacPherson \cite{bfm} define the Todd class of a possibly singular complex algebraic variety $X$ as
$$
\td_{\ast}(X) := \tau_{\ast}([\mathcal{O}_{X}]) \in H_{2\ast}^{BM}(X; \mathbb{Q}).
$$
Its genus $|\td_\ast|$ (for $X$ compact) is the arithmetic genus of a singular variety, i.e. the holomorphic Euler characteristic.

Todd classes can be studied within the framework of Gysin coherent characteristic classes by a similar method to that used for the class $IT_{1, \ast}$ in \Cref{the class it}.
In the following result, we are therefore using the same definition of the set $\mathcal{X}_{CM}$ and the notion of $\mathcal{X}_{CM}$-transversality as in \Cref{proposition it class is l type characteristic class} (and its proof) to be able to replace the algebraic by the topological Gysin map in Verdier's Gysin restriction formula for the Todd class.

\begin{thm}\label{proposition todd class is l type characteristic class}
The pair $(c\ell^{\ast}, c\ell_{\ast})$ defined by $c \ell^{\ast}(f) = \operatorname{td}^{\ast}(N_{f})$ for every inclusion $f \colon M \rightarrow W$ of a smooth closed subvariety $M \subset W$ in a smooth variety $W$ with complex normal bundle $N_{f}$, and by $c \ell_{2 \ast}(i) = i_{\ast} \operatorname{td}_{\ast}(X)$ for every inclusion $i \colon X \rightarrow W$ of a compact possibly singular subvariety $X \subset W$ in a smooth variety $W$, is a Gysin coherent characteristic class with respect to $\mathcal{X}_{CM}$.
\end{thm}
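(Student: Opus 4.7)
The approach is to closely mirror the proof of Theorem \ref{proposition it class is l type characteristic class}, systematically replacing the Hodge-theoretic ingredients for $IT_{1,\ast}$ with their Todd-class analogs: the external product compatibility of $\tau_\ast$ recalled in the proof of Corollary \ref{corollary multiplicativity of intersection generalized todd class}, covariance of $\tau_\ast$ under proper push-forward, and Verdier's proof in \cite[Theorem 7.1]{verdierintcompl} of the Baum-Fulton-MacPherson conjectural Verdier-Riemann-Roch formula for $\tau_\ast$ under regular closed embeddings. I would use the same family $\mathcal{X}_{CM}$ and the same notion of $\mathcal{X}_{CM}$-transversality (simultaneous complex algebraic Whitney transversality, generic transversality, and Tor-independence) as in the proof of Theorem \ref{proposition it class is l type characteristic class}.

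First I would verify normalization. The class $\td^\ast(N_f)$ begins with $1$ by definition. For the top component of $\td_\ast(X) = \tau_\ast[\oO_X]$ on a compact irreducible $d$-dimensional $X$, I restrict to the smooth open dense part, where (\ref{taubfmecaposmoothx}) gives $\tau_\ast[\oO_{X_{\operatorname{reg}}}] = \td^\ast(TX_{\operatorname{reg}}) \cap [X_{\operatorname{reg}}]$ with top component $[X_{\operatorname{reg}}]$; the isomorphism $A_d(X) \stackrel{\cong}{\to} A_d(X_{\operatorname{reg}})$ of \cite[Prop. 1.8]{fultonintth} and the cycle map then yield $c\ell_{2d}(i) = [X]_W$, with Remark \ref{remark top class only for irreducible varieties} reducing the general check to this irreducible case. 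The multiplicativity, isomorphism-invariance and naturality axioms (\ref{axiom multiplicativity})--(\ref{axiom locality}) are then routine: multiplicativity uses $\oO_{X \times X'} = \oO_X \boxtimes \oO_{X'}$ together with external product compatibility of $\tau_\ast$; isomorphism invariance of $c\ell_\ast$ uses $\Phi_{0\ast}\oO_X = \oO_{X'}$ combined with proper covariance of $\tau_\ast$; naturality for inclusions into a larger smooth ambient is the same covariance applied to $i = f \circ i^M$.

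The main work is axiom (\ref{axiom gysin}). Given $i \colon X \hookrightarrow W$ in $\mathcal{X}_{CM}$ and $f \colon M \hookrightarrow W$ with $M$ irreducible and $\mathcal{X}_{CM}$-transverse to $X$, Proposition \ref{proposition transversality implies tightness} (together with \cite[Corollary 3.4]{bw} for purity of $Y = M \cap X$) produces a tight regular closed embedding $g \colon Y \hookrightarrow X$ whose algebraic normal bundle satisfies $N_Y X = j^\ast N_f$ with $j \colon Y \hookrightarrow M$, the excess bundle vanishing by Tor-independence. Verdier's regular-embedding VRR applied to $g$ and $[\oO_X]$, combined with the derived-pullback identity $g^\ast[\oO_X] = [\oO_Y]$, yields
\[
g^!_\alg \td_\ast(X) \;=\; \td^\ast(N_Y X) \cap \td_\ast(Y) \;=\; j^\ast \td^\ast(N_f) \cap \td_\ast(Y).
\]

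The main obstacle is that this identity is algebraic, whereas axiom (\ref{axiom gysin}) demands the topological Gysin map. That bridge is exactly what Section \ref{Topological and algebraic Gysin restriction} supplies: since $\td_\ast(X)$ lies in the image of the cycle map by Remark \ref{rem.bfmtoddtochow}, Theorem \ref{alg and top gysin} identifies $g^!_\alg \td_\ast(X)$ with $g^{!!}_{\operatorname{top}} \td_\ast(X)$ in Borel-Moore homology, and Proposition \ref{top gysin borel moore and singular coincide} identifies this in turn with $g^!_{\operatorname{top}} \td_\ast(X)$ via the natural Borel-Moore/singular identification on the compact CW pair $(X,Y)$. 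Combining with the base change $f^!_{\operatorname{top}} i_\ast = j_\ast g^!_{\operatorname{top}}$ of \cite[Proposition 2.4]{bw} and the projection formula then yields $f^!_{\operatorname{top}} c\ell_\ast(i) = c\ell^\ast(f) \cap c\ell_\ast(j)$, closing the argument.
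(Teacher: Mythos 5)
Your proposal is correct and follows essentially the same route as the paper's proof: same family $\mathcal{X}_{CM}$ and the same simultaneous (Whitney, generic, Tor-independence) transversality notion, the same use of Verdier's regular-embedding VRR for $\tau_\ast$ applied to $[\oO_X]$ together with vanishing of the excess bundle, and the same algebraic-to-topological Gysin bridge via \Cref{alg and top gysin} and \Cref{top gysin borel moore and singular coincide} before closing with base change and the projection formula. The only cosmetic difference is in the normalization step, where you re-derive $\td_d(X)=[X]_X$ by restricting to $X_{\operatorname{reg}}$ while the paper simply cites Fulton's Theorem 18.3.5(5); both are valid and amount to the same argument.
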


\begin{proof}
By the properties of the cohomological Todd class, the class $c \ell^{\ast}(f) = \operatorname{td}^{\ast}(N_{f}) \in H^{\ast}(M; \mathbb{Q})$ is normalized for all $f$.
Moreover, the highest non-vanishing homogeneous component of $\td_{\ast}(X)$ is the fundamental class $\td_{d}(X) = [X]_{X} \in H_{2d}(X; \mathbb{Q})$ according to \cite[p. 353, Theorem 18.3.5(5)]{fultonintth}.
Consequently, the highest nontrivial homogeneous component of $c \ell_{2 \ast}(i) = i_{\ast} \operatorname{td}_{\ast}(X) \in H_{\ast}(W; \mathbb{Q})$ is the ambient fundamental class $i_{\ast}\operatorname{td}_{d}(X) = i_{\ast}[X]_{X} = [X]_{W}$.

We proceed to check the axioms of Gysin coherent characteristic classes for the pair $c \ell$.
As for axiom (\ref{axiom multiplicativity}), the multiplicativity property $\td_{\ast}(X \times X') = \td_{\ast}(X) \times \td_{\ast}(X')$ holds for all compact irreducible complex algebraic varieties $X$ and $X'$ by \cite[p. 360, Example 18.3.1]{fultonintth}.
Hence, for every $i \colon X \rightarrow W$ and $i' \colon X' \rightarrow W'$, the claim follows by applying $(i \times i')_{\ast}$ and using naturality of the cross product:
\begin{align*}
c \ell_{2 \ast}(i \times i') &= (i \times i')_{\ast}\td_{\ast}(X \times X') = (i \times i')_{\ast}(\td_{\ast}(X) \times \td_{\ast}(X')) \\
&= i_{\ast}\td_{\ast}(X) \times i_{\ast}'\td_{\ast}(X') = c \ell_{2\ast}(i) \times c \ell_{2 \ast}(i').
\end{align*}
Next, let us show that the pair $c \ell$ is compatible with ambient isomorphisms as stated in axiom (\ref{axiom isomorphism}).
As for $c \ell^{\ast}$, we consider $f \colon M \rightarrow W$ and $f' \colon M' \rightarrow W'$, and an isomorphism $W \stackrel{\cong}{\longrightarrow} W'$ that restricts to an isomorphism $\phi \colon M \stackrel{\cong}{\longrightarrow} M'$.
Then, we have $\phi^{\ast}N_{f'} = N_{f}$, and thus
$$
\phi^{\ast}c \ell^{\ast}(f') = \phi^{\ast}\td^{\ast}(N_{f'}) = \td^{\ast}(\phi^{\ast}N_{f'}) = \td^{\ast}(N_{f}) = c \ell^{\ast}(f).
$$
As for $c \ell_{\ast}$, we consider $i \colon X \rightarrow W$ and $i' \colon X' \rightarrow W'$, and an isomorphism $\Phi \colon W \stackrel{\cong}{\longrightarrow} W'$ that restricts to an isomorphism $\Phi_{0} \colon X \stackrel{\cong}{\longrightarrow} X'$.
Invariance $\Phi_{0 \ast}\td_{\ast}(X) = \td_{\ast}(X')$ under algebraic isomorphisms $\Phi_{0} \colon X \stackrel{\cong}{\rightarrow} X'$ follows from \cite[p. 360, Example 18.3.3]{fultonintth}.
Hence, we obtain
$$
\Phi_{\ast}c \ell_{2\ast}(i) = \Phi_{\ast}i_{\ast}\td_{\ast}(X) = i_{\ast}'\Phi_{0\ast}\td_{\ast}(X) = i_{\ast}'\td_{\ast}(X') = c \ell_{2\ast}(i').
$$
To verify axiom (\ref{axiom locality}), we consider $i \colon X \rightarrow W$ and $f \colon M \rightarrow W$ such that $X \subset M$.
Then, the inclusion $i^{M} := i| \colon X \rightarrow M$ satisfies $f \circ i^{M} = i$, and we obtain
$$
f_{\ast}c \ell_{2 \ast}(i^{M}) = f_{\ast}i^{M}_{\ast}\td_{\ast}(X) = i_{\ast}\td_{\ast}(X) = c \ell_{2\ast}(i).
$$
Finally, to show axiom (\ref{axiom gysin}), let us proceed as in the proof of \Cref{proposition it class is l type characteristic class} and call closed irreducible subvarieties $Z, Z' \subset W$ of a smooth variety $W$ \emph{$\mathcal{X}_{CM}$-transverse} if $Z$ and $Z'$ are simultaneously complex algebraic Whitney transverse (that is, they admit complex algebraic Whitney stratifications such that every stratum of $Z$ is transverse to every stratum of $Z'$ as smooth submanifolds of $W$), generically transverse (see e.g. \cite[Section 3]{bw}), and Tor-independent (see \Cref{definition tor independence}) in $W$.
Now, consider an inclusion $i \colon X \rightarrow W$ in $\mathcal{X}_{CM}$ and an inclusion $f \colon M \rightarrow W$ (of a smooth closed subvariety $M \subset W$ in a smooth variety $W$) such that $M$ is irreducible, and $M$ and $X$ are $\mathcal{X}_{CM}$-transverse in $W$.
Then, \Cref{proposition transversality implies tightness} implies that the embedding $g \colon Y \hookrightarrow X$ of the compact subvariety $Y = X \cap M \subset X$ is tight.
For the regular closed embedding $g \colon Y \hookrightarrow X$ with algebraic normal bundle $N = N_{Y}X$, we have the Gysin restriction formula $g^{!}_{\alg}\td_{\ast}(X) = \td^{\ast}(N) \cap \td_{\ast}(Y)$ on Chow homology (see \cite[p. 361, Example 18.3.5]{fultonintth}).
The latter is a direct consequence of the Verdier-Riemann-Roch formula for the Todd class transformation $\tau_{\ast}$, which was conjectured by Baum-Fulton-MacPherson in \cite[p. 137]{bfm}, and proved by Verdier \cite[p. 214, Theorem 7.1]{verdierintcompl} (see also \cite[p. 349, Theorem 18.2(3)]{fultonintth}).
By invoking the cycle map $\operatorname{cl} \colon A_{\ast} (X)\otimes \rat \rightarrow H^\BM_{2*} (X)\otimes \rat$, we obtain
\begin{align*}
g^{!}_{\alg}\td_{\ast}(X) &= g^{!}_{\alg}\operatorname{cl}( \td_{\ast}(X)) = \operatorname{cl} g^{!}_{\alg}\td_{\ast}(X) \\
&= \operatorname{cl}(\td^{\ast}(N) \cap \td_{\ast}(Y)) = \td^{\ast}(N) \cap \operatorname{cl}(\td_{\ast}(Y)) = \td^{\ast}(N) \cap \td_{\ast}(Y),
\end{align*}
where we used that, according to Verdier \cite[p. 222, 9.2.1]{verdierintcompl}, the algebraic Gysin map of a closed regular embedding commutes with the cycle map (see diagram (\ref{dia.verdiershowbmcyclemapgysin})), and that the cycle map $\cl$ and the cap product with Chern classes are compatible by \cite[p. 374, Prop. 19.1.2]{fultonintth} (where note that the Todd class of a complex vector bundle is a rational polynomial in Chern classes, see e.g. \cite[p. 56, Example 3.2.4]{fultonintth}).
Since $i \in \mathcal{X}_{CM}$ and $M$ and $X$ are $\mathcal{X}_{CM}$-transverse in $W$, \Cref{alg and top gysin} and \Cref{top gysin borel moore and singular coincide} (where we may use the natural identification of Borel-Moore homology and singular homology because $X$ and $Y$ are compact) imply that the algebraic Gysin map $g^{!}_{\alg} \colon H_{\ast}(X; \mathbb{Q}) \rightarrow H_{\ast}(Y; \mathbb{Q})$ coincides with the topological Gysin map $g^{!}_{\operatorname{top}} \colon H_{\ast}(X; \mathbb{Q}) \rightarrow H_{\ast}(Y; \mathbb{Q})$ on all fundamental classes $[Z]_{X}$ of closed irreducible subvarieties $Z \subset X$.
As $\td_{\ast}(X) \in H_{\ast}(X; \mathbb{Q})$ is an algebraic cycle according to \Cref{rem.bfmtoddtochow}, we obtain
$$
g^{!}_{\operatorname{top}}\td_{\ast}(X) = g^{!}_{\alg}\td_{\ast}(X).
$$
Since the embedding $g \colon Y \hookrightarrow X$ is tight, we know that the underlying inclusion $Y \subset X$ is topologically normally nonsingular with topological normal bundle $\nu$ isomorphic (as a topological vector bundle) to the underlying topological vector bundle of the algebraic normal bundle $N = N_{Y}X$.
Next, recall from \Cref{normally nonsingular inclusions induced by transverse intersections} that the inclusion $g \colon Y \hookrightarrow X$ is normally nonsingular with topological normal bundle $\nu = j^{\ast} \nu_{f}$ given by the restriction under the inclusion $j \colon Y \rightarrow M$ of the normal bundle $\nu_{f}$ of $M$ in $W$, which is the the underlying topological vector bundle of the algebraic normal bundle $N_{f} = N_{M}W$.
Using the base change $f^{!}_{\operatorname{top}}i_{\ast} = j_{\ast}g^{!}_{\operatorname{top}}$ for topological Gysin maps (see \cite[Proposition 2.4]{bw}), as well as $\td^{\ast}(N) = \td^{\ast}(j^{\ast} N_{f}) = j^{\ast} \td^{\ast}(N_{f})$, we conclude that

\begin{align*}
f^{!}_{\operatorname{top}} c \ell_{2 \ast}(i) &= f^{!}_{\operatorname{top}} i_{\ast}\td_{\ast}(X) = j_{\ast}g^{!}_{\operatorname{top}} \td_{\ast}(X) = j_{\ast}g^{!}_{\alg} \td_{\ast}(X) = j_{\ast}(\td^{\ast}(N) \cap \td_{\ast}(Y)) \\
&= j_{\ast}(j^{\ast} \td^{\ast}(N_{f}) \cap \td_{\ast}(Y)) = \td^{\ast}(N_{f}) \cap j_{\ast}\td_{\ast}(Y) = c \ell^{\ast}(f) \cap c \ell_{2 \ast}(j).
\end{align*}

This completes the proof of \Cref{proposition todd class is l type characteristic class}.
\end{proof}

\end{document}